\definecolor{aleacolor}{rgb}{0.16,0.59,0.78}
\renewcommand{\cite}{\citet}
\theoremstyle{plain}
\newtheorem{theorem}{Theorem}[section]                                          
\newtheorem{proposition}[theorem]{Proposition}                          
\newtheorem{lemma}[theorem]{Lemma}
\newtheorem{corollary}[theorem]{Corollary}
\newtheorem{conjecture}[theorem]{Conjecture}
\theoremstyle{definition}
\theoremstyle{remark}
\newtheorem{remark}[theorem]{Remark}
\newtheorem{example}[theorem]{Example}
\makeatletter \@addtoreset{equation}{section} \makeatother
\newcommand{\Z}{\mathbb{Z}}
\def\U{\mathbb{U}}
\def\N{\mathbb{N}}
\renewcommand{\P}{\mathbb{P}}
\newcommand{\E}{\mathbb{E}}
\renewcommand\Pr[1]{\mathbb{P}\left(#1\right)}
\def\d{{\rm d}}
\def\III{\mathcal I}
\def\EEE{\mathcal E}
\def\E{\textsf{E}}
\def\V{\textsf{V}}
\def\EV{\textsf{EV}}
\newcommand\br[1]{\llbracket #1 \rrbracket}
\def\llbracket{[\hspace{-.10em} [ }
\def\bigllbracket{\big[\hspace{-.10em} \big[ }
\def\rrbracket{ ] \hspace{-.10em}]}
\def\bigrrbracket{ \big] \hspace{-.10em}\big]}
\def\build#1_#2^#3{\mathrel{
\mathop{\kern 0pt#1}\limits_{#2}^{#3}}}
\DeclareMathOperator{\Find}{Find}
\DeclareMathOperator{\OFind}{\overline{Find}}
\DeclareMathOperator{\id}{id}
\def\Ina{\III^{(n)}_{A}}
\def\Tree{\mathsf{Tree}}
\def\TF{{\Tree({F})}}
\def\TFn{{\Tree({F}_{n})}}
\def\wTFn{{\Tree}(\widetilde{F}_{n})}
\def\wTFFn{{\Tree}(\widetilde{\mathscr{F}}_{n})}
\def\TFFn{{\Tree}({\mathscr{F}}_{n})}
\newcommand\BGW{\textrm{BGW}}
\renewcommand{\epsilon}{\varepsilon}
\def\dloc{\mathrm{d_{loc}}}
\DeclareFontFamily{U}{BOONDOX-calo}{\skewchar\font=45 }
\DeclareFontShape{U}{BOONDOX-calo}{m}{n}{
  <-> s*[1.05] BOONDOX-r-calo}{}
\DeclareFontShape{U}{BOONDOX-calo}{b}{n}{
  <-> s*[1.05] BOONDOX-b-calo}{}
\DeclareMathAlphabet{\mathcalbis}{U}{BOONDOX-calo}{m}{n}
\SetMathAlphabet{\mathcalbis}{bold}{U}{BOONDOX-calo}{b}{n}
\DeclareMathAlphabet{\mathbcalboondox}{U}{BOONDOX-calo}{b}{n}
\def\tb{\mathcalbis{t}}
\begin{document}

\title[Trajectories in random minimal transposition factorizations]{Trajectories in random minimal transposition factorizations}

\author{Valentin F\'eray}

\author{Igor Kortchemski}

\address{Universit\"at Z\"urich}

\address{CNRS \& CMAP, \'Ecole polytechnique}

\email{valentin.feray@math.uzh.ch, igor.kortchemski@math.cnrs.fr}
\urladdr{\url{http://user.math.uzh.ch/feray},\url{http://http://igor-kortchemski.perso.math.cnrs.fr}}


\subjclass[2010]{60C05, 05C05, 05A05 , 60F17} 
\keywords{Minimal factorizations, random trees, local limits.}

\begin{abstract}
We study  random typical minimal factorizations of the $n$-cycle,
which are factorizations of $(1, \ldots,n)$ as a product of $n-1$ transpositions, chosen uniformly at random.
Our main result is, roughly speaking, a local convergence theorem for the trajectories 
of finitely many points in the factorization.
The main tool is an encoding of the factorization by
an edge and vertex-labelled tree, which is shown to converge
to Kesten's infinite Bienaym\'e-Galton-Watson tree with Poisson offspring distribution, uniform i.i.d.~edge labels
and vertex labels obtained by a local exploration algorithm.
\end{abstract}

\maketitle

 \begin{figure}[thb]
 \begin{center}
 \includegraphics[height=4.6cm]{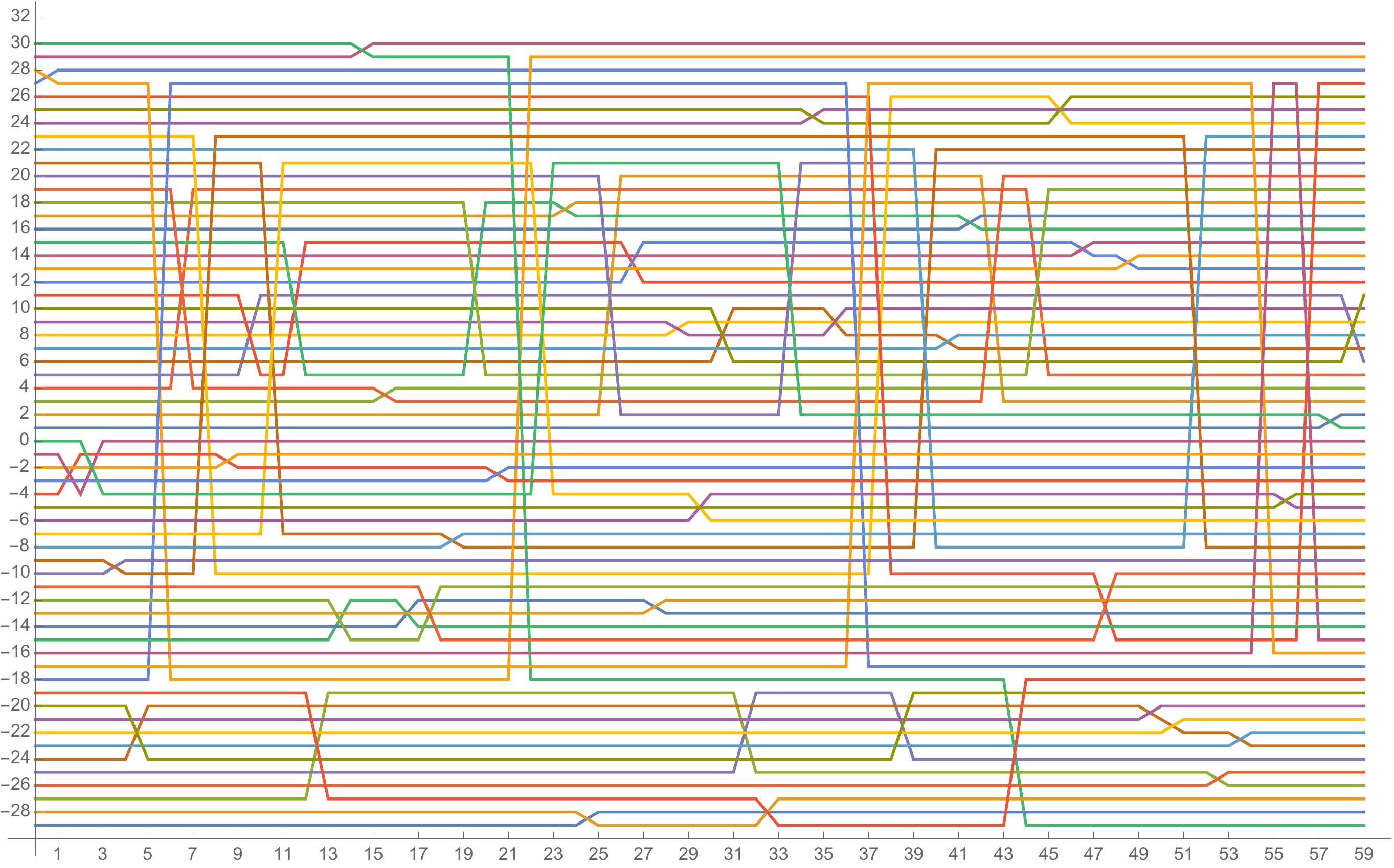} \quad   
  \includegraphics[height=4.6cm]{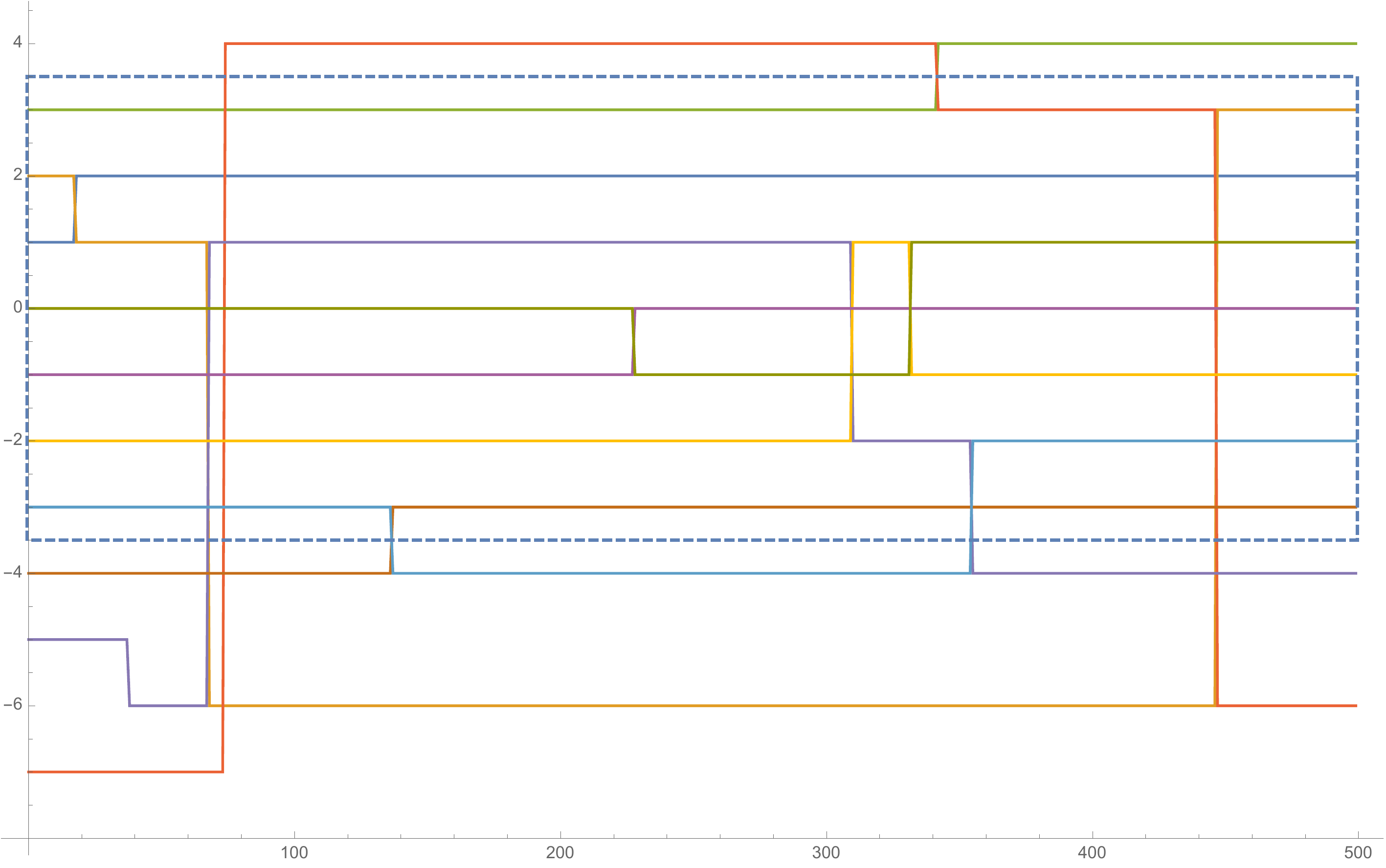} 
  \caption{\label{fig:traj}Left: a simulation of the $60$ trajectories $(X^{(60)}_{i})_{-29 \leq i \leq 30}$ of a uniform minimal factorization of the $60$-cycle. 
  Right: a simulation of all the trajectories $(X^{(500)}_{i})_{i \in \III^{(500)}_{3}}$ 
  which cross the dashed rectangle $[0,500] \times [-3,3]$; 
  here $\III^{(500)}_{3}= \{-7, -5, -4, -3, -2, -1, 0,1,2,3\}$.}
 \end{center}
 \end{figure}

\section{Introduction}

\subsection{Background and informal description of the results}
We are interested in the combinatorial structure of  typical minimal factorizations of the $n$-cycle as $n \rightarrow \infty$.
Specifically,  for an integer $n \geq 1$, we let $ \mathfrak{S}_{n}$  be the symmetric group acting on $[n] \coloneqq \{1,2, \ldots,n\}$ and we denote by $ \mathfrak{T}_{n}$ be the set of all transpositions of $ \mathfrak{S}_{n}$.
Let $(1,2, \ldots,n)$ be the $n$-cycle which maps $i$ to $i+1$ for $1 \leq i \leq n-1$.
The elements of the set
$$ \mathfrak{M}_{n} \coloneqq  \left\{ (\tau_{1}, \ldots, \tau_{n-1}) \in \mathfrak{T}_{n}^{n-1} : \tau_{1} \tau_{2} \cdots \tau_{n-1}= (1,2, \ldots,n)  \right\}$$
are called \emph{minimal factorizations of  $(1,2, \ldots,n)$ into transpositions} (it is indeed easy to see that at least $n-1$ transpositions are required to factorize a $n$-cycle). In the sequel, the elements of $ \mathfrak{M}_{n}$ will be simply called minimal factorizations of size $n$.  It is known since \cite{Den59} that $ |\mathfrak{M}_{n}|  =n^{n-2}$
and bijective proofs were later given by  \cite{Mos89},
\cite{GP93},  \cite{GY02}. 

This work is a companion paper of \cite{FK17} from the same authors:
both papers investigate the asymptotic properties of {\em a uniform random minimal factorization}
$\mathscr{F}^{(n)}$ of $n$, {but the questions of interest and the methods are in some sense orthogonal}.
One important motivation is the work of \cite{AHRV07},
who studied uniform random factorization of the reverse permutation 
by using only nearest-neighbor transpositions
(where  the reverse permutation $\rho$ is defined
by $\rho(i)=n+i-1$ for $1 \leq i \leq n$, and such factorizations are usually referred to as {\em sorting networks}).
Further motivation for studying minimal factorizations is given in \cite[Section 1.1]{FK17}.

Throughout the paper, we write $\mathscr{F}^{(n)}= (\tb_{1}^{(n)}, \ldots, \tb_{n-1}^{(n)})$
for a uniform random element in $ \mathfrak{M}_{n}$.
We may view $\mathscr{F}^{(n)}$ as a random permutation-valued process starting at $\id_n$ and ending 
at $(1,2,\dots,n)$ by considering the partial products
\[\big(\tb_{1}^{(n)} \tb_{2}^{(n)} \cdots \tb_{k}^{(n)} \big)_{1 \leq k \leq n-1}.\]
Since we view the partial products as a process indexed by $k$, the argument $k$ will be referred to as the time.
Each partial product acts on the {\em space} $\{1,\dots,n\}$.
The adjectives ``global'' and ``local'' below refer to the space. 

In \cite{FK17}, we considered the global geometry of these partial products
and described the one dimensional marginals of this process, 
i.e., the various possible limiting behaviour for the partial product
taken at time $K_n$ (with $K_n$ tending to $+\infty$), depending on the behavior of $K_{n}/\sqrt{n}$ as $n \rightarrow \infty$.

In this paper we are interested in some {\em local} properties,
namely in the trajectories of a given element $i$ (and a fixed number of neighbouring elements)
when applying successively the transpositions $\tb_{k}^{(n)}$ ($1 \leq k \leq n-1$).
We prove in \cref{thm:cvtraj} below that these trajectories converge to some random integer-valued step function,
after some renormalization in time, but {\em without} any renormalization in space.
Some combinatorial consequences of this result are also discussed.

{ This global/local opposition between  \cite{FK17} and the current paper
is also reflected in the tools that we use.
In \cite{FK17}, the main tool is to code the random permutation obtained by the partial product taken at a fixed time by a bitype biconditioned Bienaym\'e--Galton--Watson tree and to study its scaling limits. Here, we code the whole minimal factorization by a random tree, and study its local limit.}

This {\em discrete} limit behaviour  of the trajectories
is in contrast with the model of random sorting networks introduced by \cite{AHRV07}:
in the latter case, the limiting trajectories are random sine curves,
as was conjectured by  \cite{AHRV07}
and recently proved by  \cite{dauvergne2018archimedean};
see also \cite{angel2017local,gorin2017sorting} for some local limit results with no space renormalization
and a different time-renormalization.

{\em Convention:} we shall always multiply permutations from left to right,
that is $\tau_1 \dots \tau_{n-1}$ is the permutation obtained by first applying $\tau_1$ then $\tau_2$, and so on
(we warn the reader that this is opposite to the standard convention for compositions of functions).

\subsection{Main result: local convergence of the trajectories}
In order to ``zoom-in'' around a neighborhood of $1$, it is convenient to work with the $n$-cycle 
\[( -  \lfloor (n-1)/2 \rfloor, \ldots,0,1, \ldots,  \lfloor n/2 \rfloor).\]
To this end, for every integer $1 \leq a \leq n-1$, we set $\widetilde{a}=a$ if $a \le n/2$ and $\widetilde{a}=i-n$ otherwise.
If $\tau=(a,b) \in \mathfrak{S}_{n}$, we set $\widetilde{\tau}=(\widetilde{a},\widetilde{b})$.
Finally, we set $  \widetilde{\mathscr{F}}^{(n)}= (\widetilde{\tb}_{1}^{(n)}, \ldots, \widetilde{\tb}_{n-1}^{(n)})$.
Now, for $|i| \leq n/2$, we define the \emph{trajectory} $X^{(n)}_{i}$ of $i$ in $\widetilde{\mathscr{F}}^{(n)}$ by $X^{(n)}_{i}(0)=i$ and
\begin{equation}
\label{eq:traj}X^{(n)}_{i}(k)=\widetilde{\tb}_{1}^{(n)} \widetilde{\tb}_{2}^{(n)} \cdots \widetilde{\tb}_{k}^{(n)}(i), \qquad 1 \leq k \leq n-1,
\end{equation}
We also set $X^{(n)}_{i}(n)=X^{(n)}_{i}(n-1)$. See the left part of \cref{fig:traj} for an illustration.

For every $A \geq 1$, denote by $ \Ina$ the indices of all the trajectories of $\widetilde{\mathscr{F}}^{(n)}$ that enter in the rectangle $[0,n] \times [-A,A]$; formally,
\[ \Ina= \{ |i| \leq n/2: \textrm{there exists } 0 \leq k \leq n-1, |X^{(n)}_{i}(k)| \leq A\}.\]
We will consider the vector of trajectories of all $i$ in $\Ina$.
Since this is a random set, let us clarify the underlying topology first.

We denote by $\mathbb{D}([0,1])$ the set of all real-valued left-continuous functions with right-hand limits
(c\`adl\`ag functions for short) on $[0,1]$, equipped with Skorokhod $J_{1}$ topology (see \cite[Chap VI]{JS03} for background, but topologies should not be an issue since we will only work with step functions). For fixed ${I \subset \mathbb Z, \ |I|<\infty}$, we equip  $\mathbb{D}([0,1])^I$ with the product topology.
We shall work in the space \[ \biguplus_{I \subset \mathbb Z, \ |I|<\infty} \mathbb{D}([0,1])^I,\]
which is the disjoint union of these topologies, that is
$(f^{(n)}_i)_{i \in I^{(n)}}$ tends to $(f_i)_{i \in I}$ if and only if $I^{(n)}=I$ for $n$ large enough
and $f^{(n)}_i$ tends to $f_i$ in the Skorohod topology for every $i$ in $I$.

We show that the trajectories of $ \widetilde{\mathscr{F}}^{(n)}$ converge locally
in distribution as $n \rightarrow \infty$ in the following sense.
\begin{theorem}
\label{thm:cvtraj}
There is a family $(X_{i})_{i \in \mathbb{Z}}$ of integer-valued step functions on $[0,1]$
such that the following holds for every $A \geq 1$. 
Let $\III_{A}$ denote the indices of all the trajectories $(X_{i})_{i \in \Z}$
that enter the rectangle $[0,n] \times [-A,A]$. 
Then $\# \III_{A}<\infty$ almost surely and the convergence
\[
\left(X^{(n)}_{i}(\lfloor nt \rfloor) : 0 \leq t \leq 1\right)_{i \in \Ina} 
\quad \mathop{\longrightarrow}^{(d)}_{n \rightarrow \infty} \quad  (X_{i})_{i \in \III_{A}}
\]
holds in distribution.
\end{theorem}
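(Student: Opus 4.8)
The plan is to reduce \cref{thm:cvtraj} to a local limit theorem for the labelled tree encoding the factorization, together with a ``locality'' statement expressing each trajectory as a function of a bounded neighbourhood of this tree.

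\textbf{Step 1: trajectories as functionals of the encoding tree.} Recall the bijection $F\mapsto\Tree(F)$ between minimal factorizations of size $n$ and trees on the vertex set $\{-\lfloor(n-1)/2\rfloor,\dots,\lfloor n/2\rfloor\}$ whose $n-1$ edges carry distinct labels in $\{1,\dots,n-1\}$ subject to the compatibility condition recalled above. The first task is to read off $X^{(n)}_i$ on the tree: since $X^{(n)}_i(k)=\widetilde{\tb}_{k}^{(n)}\big(X^{(n)}_i(k-1)\big)$, the trajectory of $i$ is a walk on $\Tree(\widetilde{\mathscr{F}}^{(n)})$ that, at step $k$, crosses the edge labelled $k$ if and only if it is currently at one of its endpoints. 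Using the structural fact underlying the bijection --- that the partial product $\widetilde{\tb}_{1}^{(n)}\cdots\widetilde{\tb}_{k}^{(n)}$ has cycle structure given by the connected components of the subgraph spanned by the edges with label $\le k$, and acts on each component as a cyclic permutation determined by the labelled tree --- one obtains a \emph{deterministic} map $\Phi$ with $X^{(n)}_i=\Phi\big(\Tree(\widetilde{\mathscr{F}}^{(n)}),\,i\big)$. Consequently $X^{(n)}_i$ depends only on the connected component of the vertex $i$; crucially, whether $i\in\Ina$ and, if so, the whole function $X^{(n)}_i$ are measurable with respect to the ball of radius $R$ around the vertex $i$ in the edge- and vertex-labelled tree, for a radius $R=R(\Tree(\widetilde{\mathscr{F}}^{(n)}),i,A)$ that is a.s.\ finite. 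I would isolate here the quantitative input that the law of this radius admits a tail bound uniform in $n$, equivalently that $\#\Ina$ is tight.

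\textbf{Step 2: local limit of the encoding tree.} The second ingredient is that $\Tree(\widetilde{\mathscr{F}}^{(n)})$, rooted at its canonical vertex (the one carrying the label $0$ after the $\widetilde{\cdot}$ recentering), with edge label $k$ rescaled to $k/n$ and with vertex labels kept as integers, converges in distribution for the local topology $\dloc$ to an explicit limit $\mathbf{T}_\infty$: Kesten's infinite Bienaym\'e--Galton--Watson tree with $\mathrm{Poisson}(1)$ offspring distribution, equipped with i.i.d.\ $\mathrm{Uniform}[0,1]$ edge labels and with vertex labels --- a bijection with $\Z$ --- produced by the exploration algorithm that governs the compatibility condition in the limit. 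The point is to identify the law of $\Tree(\widetilde{\mathscr{F}}^{(n)})$ with that of a $\mathrm{Poisson}$ Bienaym\'e--Galton--Watson tree conditioned on its size, rooted at the canonical vertex and decorated with the label data, and then to invoke the standard local limit theorem for size-conditioned Galton--Watson trees (convergence to Kesten's tree), checking that the decorations pass to the limit: the edge constraints become asymptotically vacuous, so the limiting edge labels are i.i.d.\ uniform, and the vertex-labelling algorithm is itself a local rule that converges.

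\textbf{Step 3: finiteness in the limit and transfer.} Set $X_i\coloneqq\Phi(\mathbf{T}_\infty,i)$ and let $\III_A$ be the indices whose limiting trajectory enters $[0,1]\times[-A,A]$. One must check $\#\III_A<\infty$ almost surely; this is the continuum counterpart of the tightness of $\#\Ina$ and rests on the same analysis of $\Phi$ on Kesten's tree --- the key being that the label-exploration algorithm assigns labels far from $0$ to vertices lying far along the spine, so that their trajectories never reach $[-A,A]$, leaving only finitely many candidate vertices, a.s.\ locally finite in $\mathbf{T}_\infty$. Granting Steps 1--3, the theorem follows by a continuous-mapping argument: by Step 2 and Skorokhod's representation theorem we may realise all the trees on a common probability space with $\dloc$-convergence; on the almost-sure event that the relevant balls stabilise (which has probability $1$ by the uniform tightness from Step 1 and the finiteness from Step 3), $\Phi$ and the membership tests $\{i\in\Ina\}$ are eventually continuous, so $\Ina=\III_A$ for $n$ large and $X^{(n)}_i\to X_i$ (a trivial convergence of step functions in the Skorokhod $J_1$ sense) for every $i\in\III_A$; this is precisely convergence in the disjoint-union topology.

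\textbf{Main obstacle.} The delicate point is the locality-with-tight-radius in Step 1, together with its continuum analogue in Step 3. A priori a single transposition can move a point across a large distance in label space, so one must show that the trajectory-walk on the tree cannot wander far: quantitatively, that the probability that $X^{(n)}_i$ ever reaches $[-A,A]$ after crossing more than $R$ tree edges is summable in $R$, uniformly in $n$. This requires a hands-on understanding of how the cyclic order on the growing subtrees --- hence the successor function defining $\Phi$ --- interacts with the vertex-labelling algorithm; establishing such a bound on Kesten's tree and transferring it to the finite trees is the technical heart of the argument.
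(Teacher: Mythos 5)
Your roadmap matches the paper's proof closely: encode $\widetilde{\mathscr F}^{(n)}$ as a vertex- and edge-labelled tree, identify the edge-labelled tree (forgetting vertex labels) with a Poisson$(1)$ BGW tree conditioned to have $n$ vertices with a uniform edge-ordering, pass to Kesten's tree with i.i.d.\ uniform edge labels, recover the vertex labels by a local exploration algorithm, and conclude by Skorokhod representation and a stabilisation argument. That is precisely what happens in \cref{sec:Trees}. However, you have explicitly flagged the ``locality with tight radius'' in Step~1 and the a.s.\ finiteness of $\III_A$ in Step~3 as the main obstacle without actually resolving either, and this is not a minor technicality but the entire content of the argument.

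The missing observation is the monotonicity structure of the trajectory, which is what makes everything local. The trajectory of $i$ between two consecutive visits to integer vertices $i$ and $i+1$ traces out a path in the tree along which the \emph{edge labels are strictly increasing}: from $i$, you first cross the incident edge of smallest label, then from its other endpoint the incident edge of smallest label among those with strictly larger label, and so on until no larger label is available --- this is exactly the procedure $\Find$ (\cref{sec:relabelling}), and \cref{lem:Find} identifies the resulting walk with the trajectory. Dually, the trajectory from $i$ to $i-1$ follows a strictly decreasing path ($\OFind$). Once this is noted, locality is immediate: Kesten's tree $\mathscr T^u_\infty$ with i.i.d.\ uniform edge labels has a.s.\ no infinite monotone path (conditions $(\mathcal I)$ and $(\mathcal D)$), has one end, and the unique spine contains a.s.\ infinitely many ascents and descents; hence the exploration terminates a.s., every vertex receives exactly one label from $\Find_\infty \circ \OFind_\infty$ (\cref{lem:UniqueLabel}), the algorithm is continuous for $\dloc$ (\cref{lem:Find_Continuous}), and $\#\III_A<\infty$ a.s.\ because only finitely many vertices can reach the ball of radius giving labels in $[-A,A]$ by an increasing path. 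Your worry that ``a single transposition can move a point across a large distance in label space'' is a distraction --- in tree distance each transposition moves the point by one edge --- and you should replace it by the monotonicity argument just described. Also a small slip: the root vertex is the one labelled $1$, not $0$.
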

This theorem is illustrated by the right image in \cref{fig:traj},
where we see the local nature of the trajectories for a large value of $n$.
\medskip

Let us briefly explain the  strategy to establish \cref{thm:cvtraj}.
The first step is to code $ \widetilde{\mathscr{F}}^{(n)}$ 
by an edge and vertex labelled tree ${\Tree}(\widetilde{\mathscr{F}}^{(n)})$ 
with vertex set $ \{-  \lfloor (n-1)/2 \rfloor, \ldots,0,1, \ldots,  \lfloor n/2 \rfloor\}$
and edge set $\{\widetilde{\tb}_{1}^{(n)}, \ldots,\widetilde{\tb}_{n-1}^{(n)}\}$
(each transposition is seen as a 2-element set).
Furthermore, the edge $\widetilde{\tb}_{i}^{(n)}$ gets label $i$,
and the tree is pointed at vertex with label $1$
(see \cref{fig:tildetree} for an example).
Then, roughly speaking, the tree obtained by forgetting the vertex labels
is simply a Bienaym\'e--Galton--Watson (BGW) tree with $\textsf{Poisson}(1)$
offspring distribution with a uniform order on the edges (Section \ref{ssec:ELTree}).
Its local limit is therefore simply described in terms of
Kesten's infinite random BGW tree (\cref{Prop:Local_Without_Vertex_Labels}).
We then prove that the vertex labels can be reconstructed
by using a local labelling algorithm (as explained in Section  \ref{sec:relabelling}).
Therefore, the edge and vertex labelled tree ${\Tree}(\widetilde{\mathscr{F}}^{(n)})$
converges in the local sense to a tree obtained
by applying this local labelling algorithm to Kesten's tree.
The convergence of trajectories follows as a consequence.

\begin{figure}[t!]
\[
\begin{scriptsize}
\begin{tikzpicture}
\coordinate (1) at (0,0);
\coordinate (2) at (-.5,2.2);
\coordinate (3) at (-1.8,2.7);
\coordinate (4) at (-1.5,1);
\coordinate (5) at (0,1.2);
\coordinate (6) at (1,2.5);
\coordinate (7) at (2,-1);
\coordinate (8) at (1.5,0);
\coordinate (9) at (2,1);
\coordinate (10) at (-.5,-1);

\draw
(1) -- (5) node [midway,right] {3}
	(2) -- (3) node [midway,below] {4}
	(2) -- (5) node [midway,left] {6}
		(5) -- (6)  node [midway,right] {2}
		  (4) -- (5) node [midway,below] {8}
          (1) -- (10) node [midway,right] {9}
          (1) -- (8)	node [midway,below] {5}
          (8) -- (9)	node [midway,right] {1}
          (8) -- (7) node [midway,right] {7};
\draw[fill=black]
	(1) circle (1.5pt)
	(2) circle (1.5pt)
	(3) circle (1.5pt)
	(4) circle (1.5pt)
	(5) circle (1.5pt)
	(6) circle (1.5pt)
	(7) circle (1.5pt)
	(8) circle (1.5pt)
	(9) circle (1.5pt)
	(10) circle (1.5pt);

\draw (1) circle (3.5pt);
\draw (1) circle (5.5pt);

\draw
	(1) node[left, xshift=-0.5em] {\framebox{$1$}}
	(2) node[above] {\framebox{$2$}}
	(3) node[left] {\framebox{$3$}}
	(4) node[left] {\framebox{$4$}}
	(5) node[right] {\framebox{$5$}}
	(6) node[left] {\framebox{$-4$}}
	(7) node[left] {\framebox{$-3$}}
	(8) node[right] {\framebox{$-2$}}
	(9) node[left] {\framebox{$-1$}}
	(10) node[left] {\framebox{$0$}};
	
\node[align = center, anchor=south] at (0,-2)   {$\wTFn$};
\end{tikzpicture}
\end{scriptsize}
\]
\caption{The tree $\wTFn$ associated with the minimal factorization
$\widetilde{F_{n}}=((-1,-2) \, (5,-4)\, (1,5)\, (2,3)\, (1,-2)\, (2,5)\, (-3,-2)\, (4,5)\, (1,0))$ of $(-4,\dots,0,1,\dots,5)$.}
  \label{fig:tildetree}
\end{figure}
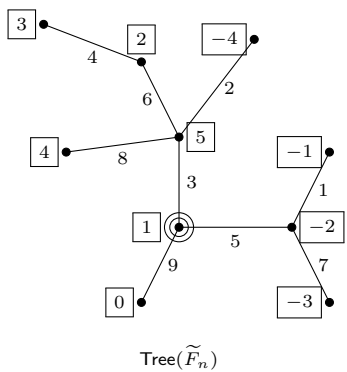

\subsection{Combinatorial consequences}
Our approach, based on an explicit relabelling algorithm, 
also allows us to obtain limit theorems for various ``local'' statistics of  $\widetilde{\mathscr{F}}^{(n)}$. 
In this direction, let  $\widetilde{\mathbb{T}}^{(n)}_{i}= \{ 1 \leq j \leq n-1 : i \in \widetilde{\tb}^{(n)}_{j}\}$ be the set
of indices of all transpositions moving $i \in \Z$ 
(transpositions are here again identified with two-element sets), and let $\widetilde{\mathbb{M}}^{(n)}_{i}$ be the set $\{1 \leq j \leq n-1 : \widetilde{\tb}^{(n)}_{1} \cdots \widetilde{\tb}^{(n)}_{j-1}(i) \neq	 \widetilde{\tb}^{(n)}_{1} \cdots \widetilde{\tb}^{(n)}_{j}(i)  \} $ of indices of all transpositions that affect the trajectory of $i \in \Z$. 
The following results will be deduced from \cref{thm:cvtraj} and from the construction of the limiting trajectories.
\begin{corollary}
\label{corol:main}
With the above notation, the following assertions hold.
\begin{enumerate}
\item[(i)] For every $k \geq 1$, $  (  \# \widetilde{\mathbb{T}}^{(n)}_{i},   \# \widetilde{\mathbb{M}}^{(n)}_{i} )_{|i| \leq k}$
  converges in distribution to a random vector whose one dimensional marginal distributions are $1+\mathsf{Poisson}(1)$ random variables;
\item[(ii)] As $n \rightarrow \infty$, $\P(X_{1}^{(n)}(k) \geq 1 \textrm{ for every } 0 \leq k \leq n-1) \rightarrow  1-1/e$;
\item[(iii)] { As $n \rightarrow \infty$, $\big( \#  \widetilde{\mathbb{T}}^{(n)}_{1}, \# \widetilde{\mathbb{M}}_{1}^{(n)} \big)$ converges in distribution to a vector of two independent $1 + \textsf{Poisson}(1)$ random variables.} 
\item[(iv)] For every $i,j \geq 1$:
\[\displaystyle \Pr{  \#  \widetilde{\mathbb{T}}^{(n)}_{1}=i, \#  \widetilde{\mathbb{T}}^{(n)}_{2}=j}  \quad \mathop{\longrightarrow}_{n \rightarrow \infty} \quad  
 e^{-2} \left( \frac{i+j-2}{(i+j-1)!} +\frac{i+j-1}{i!j!}
-\frac{i+j-1}{(i+j)!}\right).\]
\end{enumerate}
\end{corollary}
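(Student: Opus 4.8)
The plan is to work inside the explicit description of the limit established in the course of proving \cref{thm:cvtraj}: the pointed edge- and vertex-labelled tree $\Tree(\widetilde{\mathscr{F}}^{(n)})$ converges locally in distribution to a limiting tree $\Ts_{\infty}$, namely Kesten's infinite $\BGW$ tree with $\textsf{Poisson}(1)$ offspring distribution (via \cref{Prop:Local_Without_Vertex_Labels}), decorated with i.i.d.\ uniform edge labels and with vertex labels produced by the relabelling algorithm of \cref{sec:relabelling} started from the root, which carries label $1$. Two features of this construction will be used throughout. First, the trajectory attached to a vertex $v$ is the \emph{increasing path} issued from $v$: the path that leaves $v$ through its smallest-labelled edge and then repeatedly crosses, at the current vertex, the smallest-labelled edge exceeding the one just used, stopping when none exists; the number of jumps of the trajectory equals the number of edges of this path, which in $\Ts_{\infty}$ is almost surely finite since, by \cref{thm:cvtraj}, the limiting trajectories are step functions. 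Second, the vertex carrying label $i$ is the image of the root under $i-1$ applications of $v\mapsto\mathrm{succ}(v)$, where $\mathrm{succ}(v)$ is the endpoint of the increasing path issued from $v$ (in the finite model this map is just the $n$-cycle acting on vertices).

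For (i), I would first note that $\#\widetilde{\mathbb{T}}^{(n)}_{i}$ is the degree of the vertex labelled $i$ in $\Tree(\widetilde{\mathscr{F}}^{(n)})$, while $\#\widetilde{\mathbb{M}}^{(n)}_{i}$ is the number of edges of the increasing path issued from that vertex; both are continuous functionals of the rooted decorated tree for the local topology, provided the relevant explorations terminate almost surely, which holds in $\Ts_{\infty}$ as recalled above. The local convergence therefore gives the joint convergence of $(\#\widetilde{\mathbb{T}}^{(n)}_{i},\#\widetilde{\mathbb{M}}^{(n)}_{i})_{|i|\le k}$ to the corresponding vector $(\#\widetilde{\mathbb{T}}_{i},\#\widetilde{\mathbb{M}}_{i})_{|i|\le k}$ read off $\Ts_{\infty}$ (the degree of vertex $i$, and the number of edges of the increasing path from vertex $i$) — note that deducing the convergence of the jump counts from the Skorokhod convergence of the trajectories alone would be delicate for integer-valued step functions, whereas the tree local convergence handles it directly. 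To identify the one-dimensional marginals I would use the cyclic symmetry of the model: conjugating a factorization by the $n$-cycle is a measure-preserving bijection of $\mathfrak{M}_{n}$ that sends $(a,b)$ to $(a+1,b+1)$ modulo $n$ and shifts every trajectory accordingly, so that $\#\widetilde{\mathbb{T}}^{(n)}_{i}$ and $\#\widetilde{\mathbb{M}}^{(n)}_{i}$ have, for every fixed $i$ and all large $n$, the same law as $\#\widetilde{\mathbb{T}}^{(n)}_{1}$ and $\#\widetilde{\mathbb{M}}^{(n)}_{1}$. This reduces (i) to the law of $(\#\widetilde{\mathbb{T}}_{1},\#\widetilde{\mathbb{M}}_{1})$, i.e.\ to (iii).

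For (iii) and (ii) I would compute directly in $\Ts_{\infty}$. The vertex labelled $1$ is the root, whose degree equals its number of children, which in Kesten's tree follows the size-biased $\textsf{Poisson}(1)$ law, that is $1+\textsf{Poisson}(1)$. For the number of jumps $\#\widetilde{\mathbb{M}}_{1}$: the increasing path from the root descends strictly, since at each step the edge just used is the parent edge of the current vertex and the next, larger-labelled, edge leads to a child. I would decompose on the first step — cross the minimal edge at the root, whose label is the minimum of the root's edge labels — and then run the same exploration from the reached child, now restricted to labels above that minimum. Since in Kesten's tree the subtrees hanging from the non-spine children of the root are independent plain $\BGW(\textsf{Poisson}(1))$ trees carrying independent uniform edge labels, this yields a closed recursion for the joint law, or generating function, of $(\#\widetilde{\mathbb{T}}_{1},\#\widetilde{\mathbb{M}}_{1})$; working it out should show that $\#\widetilde{\mathbb{M}}_{1}$ follows the $1+\textsf{Poisson}(1)$ law and is independent of the root degree, which is (iii), and hence furnishes the marginals in (i). For (ii), the event $\{X_{1}(k)\ge 1\text{ for all }k\}$ is exactly the event that the increasing path from the root visits no vertex with label $\le 0$; the same exploration computation should evaluate its probability to $1-1/e=\P(\textsf{Poisson}(1)\ge 1)$.

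For (iv), $\#\widetilde{\mathbb{T}}^{(n)}_{2}$ converges to the degree of $\mathrm{succ}(\text{root})$, the endpoint of the increasing path from the root, so the task is the joint law of $\big(\deg(\text{root}),\deg(\mathrm{succ}(\text{root}))\big)$ in $\Ts_{\infty}$. I would decompose on the length $m$ of that path: when $m=1$ the vertex labelled $2$ is a child of the root reached through its minimal edge, and the stopping rule — all of its child-edges carry labels below that minimum — couples the two degrees; when $m\ge 2$ the two vertices sit in parts of $\Ts_{\infty}$ that are independent apart from the size-biasing at the root and the analogous dead-end biasing at $\mathrm{succ}(\text{root})$. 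Summing the resulting series over $m\ge 1$ should produce the three-term formula; alternatively one could bypass the limit and enumerate directly, through the tree bijection, the minimal factorizations of $(1,\dots,n)$ whose tree has prescribed degrees at two cyclically consecutive vertices, then pass to the limit. The main obstacle, in both (iii) and (iv), is to keep exact track of how the increasing-path exploration conditions the degrees of the root and of the path's endpoint: the dead-end constraint biases the endpoint's offspring count in a way that interacts with the random label at which the exploration stops, and it is this bookkeeping that produces the somewhat unexpected independence in (iii) and the particular combination of factorials in (iv). By contrast, once $\Ts_{\infty}$ has been pinned down, items (i) and (ii) require comparatively little extra work.
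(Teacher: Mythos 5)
Your overall structure matches the paper's: read $\#\widetilde{\mathbb{T}}^{(n)}_{i}$ as the degree of vertex $i$ and $\#\widetilde{\mathbb{M}}^{(n)}_{i}$ as the length of the tree-path (equivalently, the increasing path) from $i$ to $i+1$, invoke the local convergence of \cref{thm:localTrees}, reduce to $i=1$ by conjugation by the $n$-cycle, and then compute the limiting laws in the decorated Kesten tree. The cyclic symmetry reduction is also exactly the paper's argument for \cref{cor:marginals_T}. You take a slightly different route for the marginal of $\#\widetilde{\mathbb{M}}$: you would extract it from the joint computation in (iii), whereas the paper deduces it from an independent bijection (\cref{thm:NewBijection}, Section~\ref{sec:sym}) proving $\#\mathbb{T}^{(n)}_i\stackrel{(d)}{=}\#\mathbb{M}^{(n)}_i$ for every fixed $n$; your route is logically valid but makes (i) hostage to the hardest part of the corollary, while the paper's bijection gives the marginal for free and independently of the local-limit machinery.

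The genuine gap is that items (ii), (iii) and (iv) are not actually computed, and the sketch you give would not close cleanly. You propose a ``closed recursion'' obtained by crossing the minimal edge at the root and restarting the exploration; but the vertex you reach may or may not be on the spine, and this changes the offspring distribution of every subsequent vertex visited (size-biased Poisson on the spine, plain Poisson off it). Moreover, after crossing an edge of label $\ell$, the next step is constrained to labels in $(\ell,1]$, so the ``same exploration'' is parametrized by a continuous threshold. Consequently the recursion is not self-similar without carrying two extra pieces of state: how far along the spine the increasing path has gone, and the current edge-label threshold. The paper's proof handles precisely this by introducing $P_h^s(k_0,\dots,k_{h-1};\cdot)$ with the explicit spine-depth parameter $s$ and by integrating over the ordered labels $\ell_1<\dots<\ell_h$; the independence in (iii) and the three-term formula in (iv) emerge only after a sum over $s$ and a nontrivial cancellation in these integrals. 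Your remark that the ``dead-end biasing at $\mathrm{succ}(\mathrm{root})$'' is the main obstacle misses the point that the biasing already enters at every spine vertex crossed, not only at the endpoint, and that this is what makes the bookkeeping delicate. Likewise for (ii): you correctly identify the event as ``no nonpositive labels on the increasing path from the root,'' but without the case split (whether the minimal root-edge is on the spine, and if so, the comparison of the first two spine labels) the evaluation to $1-1/e$ does not follow. As written, (ii)--(iv) are claims without proofs.
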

The event/statistics considered in items (ii), (iii) and (iv) above are somewhat arbitrary,
the purpose is to show on specific examples how our construction allows the explicit computation of 
some limiting probabilities.
More generally, the proof of \cref{corol:main} gives a means to determine the law of the limiting random vector in (i).
The computation becomes however quickly cumbersome.\medskip

{ As for the results, it is quite surprising that  $ \# \widetilde{\mathbb{T}}^{(n)}_{1}$ and $ \# \widetilde{\mathbb{M}}_{1}^{(n)}$ are asymptotically independent.  We do not have a simple explanation of this fact, especially since the random sets $ \widetilde{\mathbb{T}}^{(n)}_{1}$ and $\widetilde{\mathbb{M}}_{1}^{(n)}$ are  \emph{not} asymptotically independent (in particular, their smallest element is the same). Also note  that
$ \# \widetilde{\mathbb{T}}^{(n)}_{1}$ and $\# \widetilde{\mathbb{M}}_{1}^{(n)}$
are \emph{not} independent for fixed $n$, even though the fact that they have the same distribution stems from a symmetry property, see \cref{thm:sym} below.}
A conjecture on the joint distribution of $(\# \widetilde{\mathbb{T}}^{(n)}_{1}, \#\widetilde{\mathbb{M}}_{1}^{(n)})$
at fixed $n$ is given at the end of the introduction (\cref{conj}).
Finally, we have not recognized any standard bivariate distribution 
for the limiting probability distribution in (iv).

\subsection{Symmetries}
Finally, we are interested in symmetry properties that are satisfied by $\mathscr{F}^{(n)}$.
Indeed, some of these symmetries are not visible {in}  the limiting object,
and may therefore help to compute limiting distributions.
As a concrete example, the limiting Poisson distribution for $\# \widetilde{\mathbb{M}}^{(n)}_{i}$
given in \cref{corol:main}(i) is proved as a consequence of an equidistribution result
for $\# \widetilde{\mathbb{M}}^{(n)}_{i}$ and $\# \widetilde{\mathbb{T}}^{(n)}_{i}$ for fixed $n$.

Our result is the following distributional identity (for convenience, we state it with the objects $\mathbb{T}^{(n)},  \mathbb{M}^{(n)}$ defined exactly as  $\widetilde{\mathbb{T}}^{(n)}, \widetilde{\mathbb{M}}^{(n)}$ when replacing $\widetilde{\mathscr{F}}^{(n)}$ with ${\mathscr{F}}^{(n)}$).
The proof is based on a bijection of \cite{GY02} between Cayley trees
and minimal factorizations.
\begin{theorem}
  \label{thm:sym}
  For every $k \ge 1$, it holds that
  \begin{multline*} \big( \# \mathbb{T}^{(n)}_{1}, \#\mathbb{M}^{(n)}_{1}, \# \mathbb{T}^{(n)}_{2},
  \#\mathbb{M}^{(n)}_{2}, \dots, \# \mathbb{T}^{(n)}_{k}, \#\mathbb{M}^{(n)}_{k}\big)\\
  \mathop{=}^{(d)} \quad 
  \big( \# \mathbb{M}^{(n)}_{n}, \#\mathbb{T}^{(n)}_{1}, \# \mathbb{M}^{(n)}_{1},     
    \#\mathbb{T}^{(n)}_{2}, \dots, \# \mathbb{M}^{(n)}_{k-1}, \#\mathbb{T}^{(n)}_{k}\big)
  \\
  \mathop{=}^{(d)} \quad
  \big( \# \mathbb{M}^{(n)}_{k}, \#\mathbb{T}^{(n)}_{k}, \dots,
  \# \mathbb{M}^{(n)}_{2}, \#\mathbb{T}^{(n)}_{2}, \# \mathbb{M}^{(n)}_{1}, \#\mathbb{T}^{(n)}_{1}\big).
\end{multline*}
  \end{theorem}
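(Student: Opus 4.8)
My plan is to read both families of statistics off a single tree, settle the reflection identity by an elementary symmetry of minimal factorizations, and obtain the shift identity from the Goulden--Yong bijection. For $F=(\tau_1,\dots,\tau_{n-1})\in\mathfrak M_n$, let $T(F)$ be the graph on the cyclically ordered vertex set $\Z/n\Z$ whose edges are the two-element sets $\tau_1,\dots,\tau_{n-1}$; as the $\tau_j$ generate a transitive subgroup of $\mathfrak S_n$ and there are only $n-1$ of them, they are pairwise distinct and $T(F)$ is a tree. With $c=(1,2,\dots,n)$, I would first prove the elementary identities
\[
\#\mathbb T_i(F)=\deg_{T(F)}(i),\qquad \#\mathbb M_i(F)=d_{T(F)}\!\big(i,c(i)\big)\qquad(i\in\Z/n\Z),
\]
where $\deg$ is the degree and $d_{T(F)}$ the graph distance in $T(F)$. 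The first is the definition. For the second, the trajectory $k\mapsto\tau_1\cdots\tau_k(i)$ is a walk in $T(F)$ from $i$ to $c(i)$ in which the edge $\tau_j$ is available only at time $j$, so it traverses each edge at most once; a walk in a tree that repeats no edge repeats no vertex, hence the trajectory follows the unique simple path of $T(F)$ from $i$ to $c(i)$ and the transpositions affecting it are exactly the $d_{T(F)}(i,c(i))$ edges of that path. (In passing, each $\tau_j$ moves exactly two trajectories, so $\sum_i\#\mathbb M_i=2(n-1)=\sum_i\#\mathbb T_i$, and an arc count then shows $T(F)$ is non-crossing.) Thus the whole statement concerns the length-$2n$ cyclic word $\mathbf W(F)=(\#\mathbb T_1,\#\mathbb M_1,\dots,\#\mathbb T_n,\#\mathbb M_n)$: the second displayed equality is invariance of its law under a reflection fixing two antipodal letters, and the first under a cyclic shift by one letter.

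\textbf{Step 2: the reflection identity.} Let $\psi\in\mathfrak S_n$ be the involution $i\mapsto n+1-i$ and $F^{\mathrm{rev}}=(\tau_{n-1},\dots,\tau_1)$. Since $F^{\mathrm{rev}}$ factorizes $c^{-1}$ and $\psi c^{-1}\psi=c$, the map $F\mapsto(\psi\tau_{n-1}\psi,\dots,\psi\tau_1\psi)$ is a bijection of $\mathfrak M_n$, hence preserves the uniform law, and on trees it acts by $T\mapsto\psi(T)$. Combined with conjugation by powers of $c$ (also uniformity preserving, acting on trees by cyclic relabellings), this gives that the law of $\mathbf W(\mathscr F^{(n)})$ is invariant under every reflection of the cyclic word that fixes two antipodal letters. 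A direct index check using Step~1 then identifies $(\#\mathbb M_n,\#\mathbb T_1,\dots,\#\mathbb M_{k-1},\#\mathbb T_k)\mathop{=}^{(d)}(\#\mathbb M_k,\#\mathbb T_k,\dots,\#\mathbb M_1,\#\mathbb T_1)$ as one such reflection, which is the second distributional identity.

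\textbf{Step 3: the shift identity, via \cite{GY02}.} It remains to show $\mathbf W(\mathscr F^{(n)})$ has the same law as its one-letter cyclic shift. This cannot come from relabelling $T(F)$, as a one-letter shift interchanges the degree letters $\#\mathbb T_i$ with the distance letters $\#\mathbb M_i$. Here I would invoke the bijection $\Phi$ of \cite{GY02} between $\mathfrak M_n$ and labelled (Cayley) trees on $[n]$: it sends the uniform $\mathscr F^{(n)}$ to a uniform random labelled tree $\mathbf S_n$, whose law is invariant under every relabelling of $[n]$. The task is then to express $\#\mathbb T_i(F)$ and $\#\mathbb M_i(F)$ through $\Phi(F)$ and, from the recursive construction of $\Phi$, to exhibit the one-letter shift of $\mathbf W$ as an operation on $\Phi(F)$ that preserves the uniform law on Cayley trees. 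Granting this, the shift-by-one invariance of $\mathbf W(\mathscr F^{(n)})$ follows, and together with Step~2 the theorem is proved.

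\textbf{Expected main obstacle.} Everything but Step~3's translation is elementary. The real work is to track, through the Goulden--Yong bijection, which Cayley-tree statistic of $\Phi(F)$ records $\#\mathbb M_i(F)$ and which records $\#\mathbb T_i(F)$, and then to recognize the one-letter shift of the interleaved word $\mathbf W$ there as a law-preserving operation on the uniform Cayley tree $\mathbf S_n$.
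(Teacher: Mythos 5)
Your Steps 1 and 2 are correct and essentially match what the paper does for the second distributional identity. In Step 1 you correctly express $\#\mathbb T_i$ as a degree and $\#\mathbb M_i$ as a graph distance $d_{T(F)}(i,i+1)$, and your argument that the trajectory follows the unique simple path in the tree is sound. In Step 2 you identify the right symmetry: the reversal-and-conjugation map $F\mapsto(\psi\tau_{n-1}\psi,\dots,\psi\tau_1\psi)$ acts on the word $\mathbf W$ as the reflection $W_j\mapsto W_{2n-j}$, and combining with powers of $c$ gives all reflections $\sigma_{2m}$; the desired second equality is the reflection $\sigma_{2k}$. This is essentially the same content as the paper's ``axial symmetry around a diameter,'' just phrased algebraically rather than via the disk embedding.

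The genuine gap is in Step 3, and you acknowledge it yourself. The first equality in the chain — the one-letter shift of $\mathbf W$, which necessarily interchanges $\mathbb T$-letters with $\mathbb M$-letters — cannot be a relabelling symmetry of $T(F)$, and your plan to ``push forward to a uniform Cayley tree and find a law-preserving operation there'' is only a plan, not an argument. The paper's actual mechanism is the construction of a specific self-bijection $\mathcal B\colon\mathfrak M_n\to\mathfrak M_n$ obtained from the Goulden--Yong \emph{dual} tree (take the dual $\mathcal E^\dagger(F)$ in the non-crossing disk drawing of $\mathcal E(F)$, then replace each edge label $i$ by $n-i$, and read off a factorization from the result). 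The decisive point is the exact combinatorial identity $\#M_i^F=\#T_i^{\mathcal B(F)}$ and $\#T_i^F=\#M_{i-1}^{\mathcal B(F)}$ (the paper's Theorem \ref{thm:NewBijection}), which says precisely that $\mathcal B$ implements the one-letter shift of $\mathbf W$. Without exhibiting $\mathcal B$ (or an equivalent) and proving this identity, the first equality in the theorem is unproven. Two further cautions about your Step 3 outline: it is not automatic that a bijection $\Phi$ from $\mathfrak M_n$ to Cayley trees sends $\mathscr F^{(n)}$ to a \emph{uniform} Cayley tree unless you check that carefully for the chosen $\Phi$ (in particular the naive map $F\mapsto T(F)$ does \emph{not} have this property, as it lands in non-crossing trees); and even if it did, merely having a law-preserving operation on Cayley trees is not enough — you need one that realizes the specific interchange $\#\mathbb T\leftrightarrow\#\mathbb M$, which is exactly the content you haven't supplied.
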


  \subsection{A conjecture} 
We conclude this Introduction with a conjectural formula
for the bivariate probability generating polynomial of $(\#\mathbb{T}^{(n)}_{1}, \# \mathbb{M}^{(n)}_{1})$
for a {\em fixed value} of $n$.
Proving this conjecture would give an alternate proof
of \cref{corol:main} (iii), that is of the asymptotic distribution of this pair of statistics.
\begin{conjecture}
  \label{conj}
  Fix $n \ge 2$. Then
  \[\mathbb E \big[x^{ \#T^F_1} y^{\# M^F_1}\big] = xy \left( \frac{n-2+x+y}{n} \right)^{n-2}.\]
\end{conjecture}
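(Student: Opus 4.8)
The plan is to prove \cref{conj} by the same bijective route used for \cref{thm:sym}, namely the Goulden--Yong bijection between minimal factorizations of size $n$ and labelled (Cayley) trees on $n$ vertices, but now keeping careful track of the two statistics $\#\mathbb{T}^{(n)}_1$ and $\#\mathbb{M}^{(n)}_1$ under the bijection. First I would recall the bijection precisely: to a minimal factorization $F$ one associates a Cayley tree on $\{1,\dots,n\}$ whose edges are the transpositions of $F$, together with an ordering of the edges obtained from the positions $1,\dots,n-1$ in the factorization; the Goulden--Yong construction characterizes exactly which edge-orderings arise (they are the ones satisfying a local increasing/decreasing condition around each vertex coming from the cyclic structure of $(1,2,\dots,n)$). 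Under this correspondence $\#\mathbb{T}^{(n)}_1$ becomes simply $\deg_T(1)$, the degree of vertex $1$ in the tree $T$, since the transpositions moving $1$ are exactly the edges incident to $1$. The quantity $\#\mathbb{M}^{(n)}_1$ is the number of edges on the trajectory of $1$, i.e. the number of positions $j$ where the partial product moves $1$; I would need to translate this into a tree statistic, and the natural guess (to be verified from the Goulden--Yong rules) is that it counts the edges incident to $1$ together with a correction, or equivalently the number of vertices visited by $1$, which in the tree picture should be a local quantity around vertex $1$.

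Second, I would set up the generating function computation. Writing $F\leftrightarrow (T,\sigma)$ with $\sigma$ an admissible edge-ordering, the left-hand side of \cref{conj} is
\[
\frac{1}{n^{n-2}}\sum_{T}\ \sum_{\sigma\ \mathrm{admissible}} x^{\deg_T(1)}\, y^{\#\mathbb{M}^{(n)}_1(T,\sigma)}.
\]
The key structural input is that for a \emph{fixed} tree $T$, the admissible orderings $\sigma$ and the resulting value of $\#\mathbb{M}^{(n)}_1$ depend only on the combinatorics near vertex $1$, so the inner sum factorizes as $(\text{number of admissible }\sigma\text{ for }T)$ times a local weight. Since $\sum_\sigma 1$ over admissible orderings, summed over all trees, must give $n^{n-2}$ (that is the content of the bijection), and since by the symmetry in \cref{thm:sym} the pair has a transparent marginal structure, one reduces to a weighted count of Cayley trees by the degree of vertex $1$ with an extra local weight depending on the cyclic position data. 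I expect that, after peeling off vertex $1$ and its incident edges, the remaining forest decomposes into independent pieces and one gets a product over the subtrees hanging off $1$, each contributing a factor that sums (over the labels and the local ordering choices) to something like $(n-2+x+y)/n$ raised to the appropriate power — the exponent $n-2$ then emerging from a Cayley-type enumeration (e.g. a Pollak/Prüfer argument or the multivariate tree formula $\sum_T \prod_v z_v^{\deg_T(v)-1} = (z_1+\dots+z_n)^{n-2}$).

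Concretely I would aim to show that after the bijection the sum becomes an instance of the weighted Cayley formula
\[
\sum_{T\ \text{on}\ [n]} \prod_{v=1}^{n} z_v^{\deg_T(v)} \ =\ (z_1+\cdots+z_n)^{n-2}\prod_{v=1}^n z_v,
\]
specialized to $z_1$ carrying the weight recording both $x$ (for each edge at $1$) and a factor of $y$, and $z_2=\dots=z_n=1$ up to a rescaling; tracking the admissibility constraint on orderings should convert the ``extra'' factor attached to vertex $1$ into exactly $xy$ and turn each other $z_v$ into $1$ while shifting the total $z_1+\dots+z_n$ to $n-2+x+y$. The prefactor $1/n^{n-2}$ then cancels against a power of $n$ coming from the $\binom{n-2+\cdots}{}$-type normalization, leaving $xy\left(\frac{n-2+x+y}{n}\right)^{n-2}$.

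The main obstacle, I expect, is precisely the translation of $\#\mathbb{M}^{(n)}_1$ into a clean tree-and-ordering statistic and then showing that summing over admissible orderings produces the required \emph{product} form. The degree statistic $\#\mathbb{T}^{(n)}_1=\deg_T(1)$ is immediate, but $\#\mathbb{M}^{(n)}_1$ mixes the tree structure with the edge-ordering in a way that is not obviously multiplicative over the branches at $1$; making this rigorous will require a careful analysis of the Goulden--Yong local rules (the ``increasing around black corners, decreasing around white corners'' conditions dictated by the cycle $(1,2,\dots,n)$) to see that the contributions of distinct subtrees hanging off vertex $1$ genuinely decouple, and to identify the per-subtree generating factor. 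A secondary difficulty is bookkeeping the cyclic-position data attached to the vertices (which is what makes the count $n^{n-2}$ rather than just the number of Cayley trees) so that it assembles into the single symmetric expression $n-2+x+y$ rather than something label-dependent; here the symmetry of \cref{thm:sym} and an averaging argument over the choice of which cyclic position plays the role of ``$1$'' should do the job.
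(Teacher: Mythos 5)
The statement you are addressing is explicitly labeled a \emph{conjecture} in the paper; the authors state that it has been verified numerically up to $n=8$, that it ``would give an alternate proof of \cref{corol:main} (iii),'' and that ``none of the bijections between minimal factorizations and trees we are aware of explain this fact.'' So there is no proof in the paper to compare against, and a correct argument here would constitute a new result.

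Your proposal, as written, does not close the gap. The central difficulty --- which you flag at the end but then argue past --- is fatal to the plan: $\#M^F_1$ is \emph{not} a local statistic around vertex $1$ in either of the tree encodings. As the paper states explicitly (beginning of \cref{sec:Consequences_GeneralStatements}), $\#M^F_i$ equals the graph distance between the vertices labelled $i$ and $i+1$ in $\Tree(F)$; equivalently (\cref{sec:sym}), in the Goulden--Yong drawing of $\mathcal{E}(F)$ it is the number of edges bounding the face containing the arc $\wideparen{1,2}$. Both of these depend on the global structure of the tree: the path from $1$ to $2$, or the face at $\wideparen{1,2}$, can be arbitrarily long and can wander far from vertex $1$. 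Hence the assertion that ``the admissible orderings $\sigma$ and the resulting value of $\#\mathbb{M}^{(n)}_1$ depend only on the combinatorics near vertex $1$, so the inner sum factorizes'' is false, and the subsequent multiplicative decomposition over branches at vertex $1$ does not hold. Indeed this is exactly why the right-hand side $xy\big((n-2+x+y)/n\big)^{n-2}$ --- which, via the weighted Cayley formula you cite, is the bivariate p.g.f.\ of $(\deg_T(1),\deg_T(2))$ in a uniform Cayley tree $T$ --- cannot be read off from $\mathcal{E}(F)$: under $\mathcal{E}$ you do get $\#T^F_1=\deg_{\mathcal{E}(F)}(1)$, but $\#M^F_1$ is a distance, not a degree, and the bijection $\mathcal{B}$ of \cref{sec:sym} only converts $\#M^F_1$ into a degree \emph{in a different tree} $\mathcal{E}(\mathcal{B}(F))$, not in $\mathcal{E}(F)$ itself. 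Matching the joint law of a degree in one tree and a degree in its dual to the joint law of two degrees in a single uniform Cayley tree is precisely the nontrivial content of the conjecture, and nothing in the proposal addresses it.

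A secondary issue is the framing of the bijection: $\mathcal{E}$ is a bijection from $\mathfrak{M}_n$ directly to pointed edge-labelled non-plane trees (Moszkowski's \cref{prop:Mos}); there is no separate sum over a tree and over ``admissible orderings $\sigma$'' of that tree, because once the edge labels and the pointing are fixed the factorization is determined. The Goulden--Yong conditions (non-crossing drawing, labels decreasing clockwise around faces) are characterizations of the image, not an extra fibering to sum over. Recasting the problem as a weighted sum over Cayley trees with a ``per-tree number of admissible orderings'' factor is therefore not the right setup and would double-count. If you want to pursue this line, the real task is to find a \emph{single} bijection $\Phi$ from $\mathfrak{M}_n$ to Cayley trees on $[n]$ under which simultaneously $\#T^F_1 \mapsto \deg_{\Phi(F)}(1)$ and $\#M^F_1 \mapsto \deg_{\Phi(F)}(2)$; neither $\mathcal{E}$ nor $\mathcal{B}\circ\mathcal{E}$ does this, and the authors report not knowing such a bijection. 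Without it, the proposal remains a heuristic sketch rather than a proof.
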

This conjecture has been numerically checked until $n=8$.
Note that the right-hand side is known to be the bivariate probability generating polynomial of
the degrees of the vertices labelled $1$ and $2$ in a uniform random Cayley tree
(observe that this is
different however from that of $(\# \mathbb{T}^{(n)}_1 , \# \mathbb{T}^{{(n)}}_2)$, 
i.e. the degrees of the vertices labelled $1$ and $2$ in $\Tree(\mathscr{F}^{(n)})$).
None of the bijections between minimal factorizations and trees we are aware of explain this fact.

\medskip

{ \paragraph*{Acknowledgment.} We would like to thank the  anonymous referee for a  careful reading as well as for many useful remarks.

VF is partially supported by the grant nb 200020-172515,
from the Swiss National Science Foundation. IK acknowledges partial support from grant number ANR-14-CE25-0014 (ANR GRAAL) and FSMP (``Combinatoire à Paris'').}

\section{Local convergence of the factorization tree}

\label{sec:Trees}

\begin{table}[htbp]\caption{Table of the main notation and symbols appearing in Section \ref{sec:Trees}.}
\centering
\begin{tabular}{c c p{9cm} }
\toprule
 $\mathfrak M_n$& & The set of all minimal factorizations of $(1,2, \ldots,n)$.\\
$\TF$ & & The   edge-labelled vertex-labelled non-plane tree associated with a minimal factorization $F$.\\
$\widetilde{F}$ & & The minimal factorization obtained from $F$ by subtracting $n$ to all edge-labels larger than $n/2$ when $F \in  \mathfrak M_n$. \\
$\mathscr T_n$ && A $\mathrm{BGW}$ tree with $\mathsf{Poisson}(1)$ offspring distribution conditioned on having $n$ vertices.\\
$\mathscr T_\infty $ && The $\BGW$ tree with $\mathsf{Poisson}(1)$ offspring distribution  conditioned to survive.\\
$\EEE(F)$ & & The pointed edge-labelled non-plane tree associated with a minimal factorization $F$.\\
$\tau'$ & &  When $\tau$ is an edge and vertex-labelled tree, $\tau'$ is  the vertex-labelled tree obtained from $\tau$ by forgetting the edge labels.\\
\bottomrule
\end{tabular}
\label{tab:localproperties}
\end{table}

The goal of this section is to establish \cref{thm:cvtraj},
which is a local limit theorem for 
the trajectories of a random uniform minimal factorization $\mathscr{F}_{n}$.
The main tool is the encoding of a minimal factorization $F_n$ 
as a vertex and edge-labelled tree ${\Tree}(\widetilde{{ F}}_{n})$,
which was presented in the Introduction (see \cref{fig:tildetree}).

After providing background on trees and local convergence (\cref{ss:trees}),
we will see that the tree ${\Tree}(\mathscr{F}_{n})$ without vertex labels
is a Bienaym\'e-Galton-Watson (BGW) tree with a uniform edge-ordering.
The local limit of such a tree follows from standard results in the random tree literature (\cref{ssec:ELTree}).
We then explain (\cref{sec:relabelling}) how to reconstruct
the {\em vertex labels} by a local labelling algorithm, which can also be run on infinite trees (\cref{ssec:relabel}). 
Continuity and locality properties of the labelling algorithm yield
a local limit result for the vertex and edge-labelled tree ${\Tree}(\widetilde{{  F}}_{n})$ (\cref{thm:localTrees})
\cref{thm:cvtraj} follows easily (\cref{ssec:proof}).

\pagebreak

\subsection{Preliminaries on trees}
\label{ss:trees}

\subsubsection{Plane trees.} We use Neveu's formalism \cite{Nev86} to define (rooted) plane trees:
let $\N = \{1, 2, \dots\}$ be the set of all positive integers, and consider the set of labels $\U = \bigcup_{n \ge 0} \N^n$
with the convention $\N^0 = \{\varnothing\}$.
For $u = (u_1, \dots, u_n) \in \U$, we denote by $|u| = n$ the length of $u$; if $n \ge 1$, we define $pr(u) = (u_1, \dots, u_{n-1})$ and for $i \ge 1$, we let $ui = (u_1, \dots, u_n, i)$; more generally, for $v = (v_1, \dots, v_m) \in \U$, we let $uv = (u_1, \dots, u_n, v_1, \dots, v_m) \in \U$ be the concatenation of $u$ and $v$.  A plane tree is a nonempty  subset $\tau \subset \U$ such that (i) $\varnothing \in \tau$; (ii) if $u \in \tau$ with $|u| \ge 1$, then $pr(u) \in \tau$; (iii)  if $u \in \tau$, then there exists an integer $k_u(\tau) \ge 0$ such that $ui \in \tau$ if and only if $1 \le i \le k_u(\tau)$. Observe that plane trees are rooted at $\varnothing$ by definition.

We will view each vertex $u$ of a tree $\tau$ as an individual of a population for which $\tau$ is the genealogical tree.
The vertex $\varnothing$ is called the root of the tree and for every $u \in \tau$, $k_u(\tau)$ is the number of children of $u$ (if $k_u(\tau) = 0$, then $u$ is called a leaf, otherwise, $u$ is called an internal vertex), $|u|$ is its generation, $pr(u)$ is its parent and more generally, the vertices $u, pr(u), pr \circ pr (u), \dots, pr^{|u|}(u) = \varnothing$ are its ancestors. To simplify, we will sometimes write $k_{u}$ instead of $k_{u}(\tau)$.  A plane tree  is said to be locally finite
if all its vertices have a finite number of children.
Finally, if $\tau$ is a tree and $h$ is a nonnegative integer, we let $\br{\tau}_{h}=  \{u \in \tau : \, |u| \leq h\}$ denote the tree obtained from $\tau$ by keeping the vertices in the first $h$ generations.

\subsubsection{Non-plane trees.} In parallel to plane trees, since $\TFn$  is by definition a non-plane tree, 
we will need to consider non-plane trees.
By definition, a non-plane tree is a connected graph without cycles.
A non-plane tree with a distinguished vertex is called pointed.
Equivalently, a pointed non-plane tree
 is an equivalence class of plane trees under permutation of the order of the children of its vertices,
 the root of the plane tree being the distinguish vertex.
 If $\tau$ is a plane tree, we denote by $\mathsf{Shape}(\tau)$ 
 the corresponding non-plane tree (informally, we forget the planar structure of $\tau$,
 and point it at the root vertex).

\pagebreak
 \subsubsection{Labelled trees}
In this work, we consider trees which are:
\begin{itemize}
\item \emph{edge labelled}, in the sense that all edges carry real-valued labels,
\item  \emph{vertex labelled}, in the sense that some vertices (potentially none,
potentially all) carry integer values.
\end{itemize}
{Edge labels, as well as vertex labels, will always be assumed to be distinct.}
We do not impose any relation between vertex and edge labellings.
In the figures, we shall use framed labels for vertex labels to avoid confusion
with edge labels.

To simplify notation, we say that a tree is \E-labelled, \V-labelled or \EV-labelled if it is respectively edge labelled, vertex labelled, or edge and vertex-labelled.

\subsubsection{Local convergence for labelled trees.} 
Let $\tau$ be a locally finite (potentially infinite) non-plane tree, pointed, \EV-labelled in the sense defined above.
As for plane trees,
for every integer $h \geq 1$, we denote by $\llbracket \tau \rrbracket_{h}$ 
the finite non-plane, pointed, \EV-labelled tree
obtained from $\tau$ by
 keeping only the vertices at distance at most $h$ from the pointed vertex
 (together with the edges between them and their labels).
We say that $\llbracket \tau \rrbracket_{h}$ is a \emph{labelled ball},
see Figure \ref{fig:labelledball} for an example.

We consider a topology
on the set of all locally finite, non-plane, pointed, \EV-labelled trees such that
$\tau_{n} \rightarrow \tau$  if and only if for each $h \ge 1$,
the \V-labelled ball $\br{\tau_{n}}_h$ and $\br{\tau}_h$ coincide for $n$ large enough
and the edge labels of $\br{\tau_{n}}_h$ tends to that of $\br{\tau}_h$.
It is easy to construct a metric $\dloc$ for such a topology and to see that
the resulting metric space is Polish.
Similar metrics can be defined for other families of trees 
(plane, unlabelled or only $\E$ or $\V$-labelled, etc.) and will be also denoted by $\dloc$.

Finally, if $\tau$ is an \EV-labelled tree and $\alpha>0$,
we denote by $\alpha \cdot \tau$ the \EV-labelled tree obtained by multiplying all the {\em edge labels} by $\alpha$
(vertex labels are not modified!), 
and we denote by $\tau'$ the \V-labelled tree obtained from $\tau$ by forgetting the edge labels.

 \begin{figure}[t]
\[\begin{tikzpicture}[scale=0.8]
\coordinate (1) at (0,0);
\coordinate (2) at (-1.5,2.8);
\coordinate (3) at (-1.5,3.8);
\coordinate (4) at (-.2,2.8);
\coordinate (5) at (-1.5,1.5);
\coordinate (6) at (-2.8,2.8);
\coordinate (7) at (1.5,2.8);
\coordinate (8) at (0,1.5);
\coordinate (9) at (.5,2.8);
\coordinate (10) at (1.5,1.5);
\draw
(1) -- (5) node [midway,below left] {.4}
	(2) -- (3) node [midway,right] {.5}
	(2) -- (5) node [midway,above right] {.7}
		(5) -- (6)  node [midway,below] {.2}
		  (4) -- (5) node [midway,below] {.9}
          (1) -- (10) node [midway, below right] {.95}
          (1) -- (8)	node [midway,above right] {.6}
          (8) -- (9)	node [midway,left] {.1}
          (8) -- (7) node [midway,right] {.8};
\draw (1) circle (3.5pt);
\draw (1) circle (5.5pt);
\draw[fill=black]
	(1) circle (1.5pt) node [below left] {$\framebox{1}$}
	(2) circle (1.5pt)node [left] {$\framebox{2}$}
	(3) circle (1.5pt)node [above] {$\framebox{3}$}
	(4) circle (1.5pt)node [above] {$\framebox{4}$}
	(5) circle (1.5pt)node [below left] {$\framebox{5}$}
	(6) circle (1.5pt)node [left] {}
	(7) circle (1.5pt)node [above] {}
	(8) circle (1.5pt)node [left] {}
	(9) circle (1.5pt)node [above] {}
	(10) circle (1.5pt)node [right] {}
	;
\end{tikzpicture}
\qquad 
\begin{tikzpicture}
\coordinate (1) at (0,0);
\coordinate (5) at (-1.5,1.5);
\coordinate (8) at (0,1.5);
%
\draw
(1) -- (5) node [midway,below left] {.4}
          (1) -- (10) node [midway, below right] {.95}
          (1) -- (8)	node [midway,above right] {.6}
;
\draw (1) circle (3.5pt);
\draw (1) circle (5.5pt);
\draw[fill=black]
	(1) circle (1.5pt) node [below left] {$\framebox{1}$}
	(5) circle (1.5pt)node [above] {$\framebox{5}$}
	(8) circle (1.5pt)node [above] {}
	(10) circle (1.5pt)node [above] {}
	;
\end{tikzpicture}
\qquad 
\begin{tikzpicture}
\coordinate (1) at (0,0);
\coordinate (5) at (-1.5,1.5);
\coordinate (8) at (0,1.5);
\coordinate (10) at (1.5,1.5);
\draw
(1) -- (5) node [midway, below left] {.4}
	(2) -- (5) node [midway,above right] {.7}
		(5) -- (6)  node [midway,below] {.2}
		  (4) -- (5) node [midway,below] {.9}
          (1) -- (10) node [midway, below right] {.95}
          (1) -- (8)	node [midway,right] {.6}
          (8) -- (9)	node [midway,left] {.1}
          (8) -- (7) node [midway,right] {.8};
\draw[fill=black]
	(1) circle (1.5pt) node [below left] {$\framebox{1}$}
	(2) circle (1.5pt)node [above] {$\framebox{2}$}
	(4) circle (1.5pt)node [above] {$\framebox{4}$}
	(5) circle (1.5pt)node [below left] {$\framebox{5}$}
	(6) circle (1.5pt)node [above] {}
	(7) circle (1.5pt)node [above] {}
	(8) circle (1.5pt)node [left] {}
	(9) circle (1.5pt)node [above] {}
	(10) circle (1.5pt)node [right] {}
	;
\draw (1) circle (3.5pt);
\draw (1) circle (5.5pt);
\end{tikzpicture}
\]
    \caption{From left to right: a pointed \EV-labelled tree $\tau$ (considered as non-plane),
    and its labelled balls $\llbracket \tau \rrbracket_{1}$ and $\llbracket \tau \rrbracket_{2}$.}
    \label{fig:labelledball}
  \end{figure}
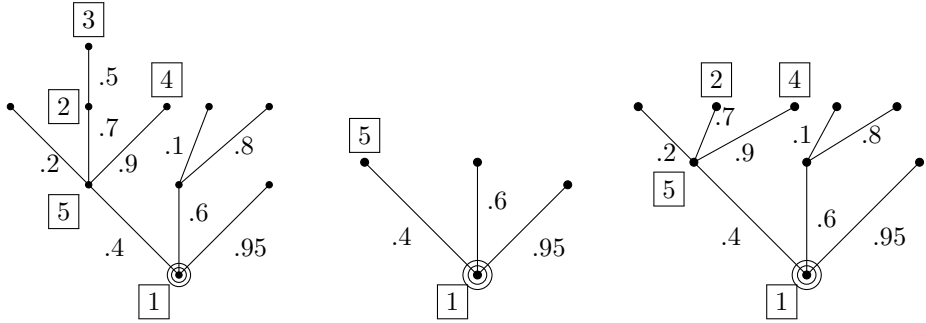

\subsubsection{Random BGW trees.} Let $\mu$ be a  probability measure on $\Z_+$ (called the offspring distribution) such that $\mu(0) > 0$, $\mu(0)+\mu(1)<1$ (to avoid trivial cases). When  $\sum_{i \geq 0} i \mu (i) \leq 1$, the {\BGW} measure with offspring distribution $\mu$ is a probability measure $\mathrm{BGW}^\mu$ on the set of all {plane}  finite trees such that $
\mathrm{BGW}^\mu(\tau) = \prod_{u \in \tau} \mu(k_u)$
for every finite tree $\tau$. For every integer $n \ge 1$, we denote by $\mathrm{BGW}^\mu_n$  the conditional probability measure  $\mathrm{BGW}^\mu$  given the set $\mathbb{A}_{n}$ of all {plane} trees with $n$ vertices, so that
$$ \mathrm{BGW}^\mu_n(\tau)= \frac{\mathrm{BGW}^\mu( \tau)}{\mathrm{BGW}^\mu(\mathbb{A}_{n})}, \qquad \tau \in \mathbb{A}_{n}$$  (we  always implicitly restrict ourselves to values of $n$ such that $\mathrm{BGW}^\mu(\mathbb{A}_{n})>0$). 
This is the distribution of a BGW random tree with offspring distribution $\mu$,
conditioned on having $n$ vertices.

\subsubsection{The infinite BGW tree.}
Let $ \mu=(\mu_{i})_{i\geq 0}$ be a critical
offspring distribution (meaning that $\sum i \mu_{i} =1$) with $\mu_1 \neq 1$.
The conditioned random tree $\mathrm{BGW}^\mu_n$
is  known to have a local limit  $\mathscr{T}_{\infty}$,  which we now present  (see
\cite[Section 5]{Jan12b} and \cite{LP16} for a formal definition of   $\mathscr{T}_{\infty}$). 
Let $\overline{\mu}$ be the size-biased
distribution of $\mu$ defined by $ \overline{\mu}_{k} = k \mu_{k}$
for $k \geq 0$. 
In  $\mathscr{T}_{\infty}$, there are two types of nodes: normal nodes and special, with the root being special. Normal nodes have offspring according to independent copies of $\mu$, while special nodes have offspring according to independent copies of $\overline{\mu}$. Moreover, all children of a
normal node are normal; among all the children of a special node one  is selected uniformly at random and is special, while all other children
are normal. The tree  $\mathscr{T}_{\infty}$ has a unique infinite path (called the spine) formed by all the special vertices  (see \cref{fig:critical} for an illustration of this construction).

\begin{figure}[!h]
  \begin{center}
  \includegraphics[scale=0.6]{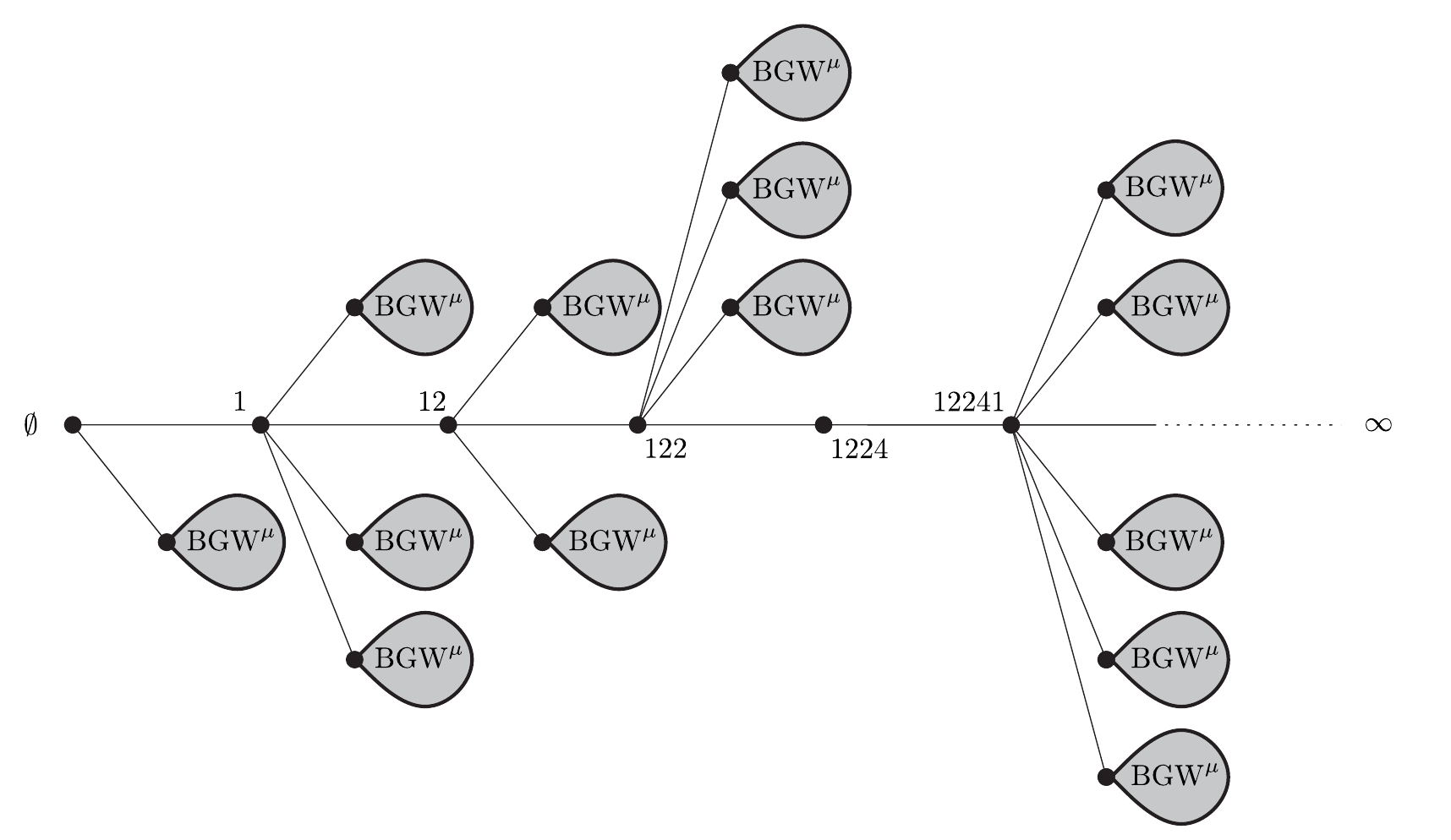}
  \caption{\label{fig:critical} An illustration of $\mathscr{T}_{\infty}$.}
  \end{center}
  \end{figure}

In this paper, we shall use the particular case where $\mu$ is a $\mathsf{Poisson}(1)$  offspring distribution.
Moreover, we consider trees with edge labels.
Specifically, let $\mathscr{T}_{n}$ a  $\BGW$ tree with $\mathsf{Poisson}(1)$ 
offspring distribution conditioned on having $n$ vertices,
let $\mathscr{T}_{n}^{e}$ be the tree obtained from  $\mathscr{T}_{n}$
by labelling its edges in a uniform way, using once each integer from $1$ to $n-1$.
Similarly, let   $\mathscr{T}^{u}_{\infty}$ be the above constructed random tree $\mathscr{T}_{\infty}$,
with edges labelled by i.i.d~ random variables following the uniform distribution on $[0,1]$.
Finally, we recall that $\frac{1}{n} \mathsf{Shape}(\mathscr{T}^{e}_{n})$
is the tree obtained by multiplying the edge labels of $\mathscr{T}^{e}_{n}$ by $ \frac{1}{n}$
(there are no vertex labels for the moment).
The following result is an adaptation to our need of standard local convergence result for BGW trees.
\begin{proposition}
  \label{Prop:Local_Cv_BGW} 
  The convergence
  $$ \tfrac{1}{n} \mathsf{Shape}(\mathscr{T}^{e}_{n})  \quad \mathop{\longrightarrow}^{(d)}_{n \rightarrow \infty} \quad   \mathsf{Shape}(\mathscr{T}^{u}_{\infty}) $$
 holds in distribution in the set of all locally finite, non-plane pointed, \E-labelled trees equipped with $\dloc$.
 \end{proposition}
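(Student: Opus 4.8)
The plan is to handle the unlabelled shape and the edge labels separately: the shape through the classical local limit theorem for conditioned Bienaym\'e--Galton--Watson trees, and the labels through a ``sampling without replacement tends to sampling with replacement'' argument. First, by the local limit theorem for critical conditioned $\mathrm{BGW}$ trees (Kesten; see e.g.\ \cite{Jan12b,LP16}) together with the continuity of $\mathsf{Shape}$, the tree $\mathsf{Shape}(\mathscr{T}_{n})$ converges in distribution, for $\dloc$ on pointed non-plane trees, to $\mathscr{T}_{\infty}$. In particular, for every $h \geq 1$ the ball $\br{\mathsf{Shape}(\mathscr{T}_{n})}_{h}$ converges in distribution to $\br{\mathscr{T}_{\infty}}_{h}$, viewed as a random variable with values in the countable set of pointed non-plane trees of height at most $h$; being convergent, the corresponding family of laws is in particular tight.

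Next I would reduce the statement to the convergence of balls. Recall that $\tau_{n} \to \tau$ for $\dloc$ on \E-labelled pointed trees means precisely that for each $h$ the shapes of $\br{\tau_{n}}_{h}$ and $\br{\tau}_{h}$ coincide for $n$ large, and that the finitely many edge labels converge. Since all edge labels of $\tfrac{1}{n} \mathsf{Shape}(\mathscr{T}^{e}_{n})$ lie in $(0,1)$, and the ball shapes form a tight family by the previous step, a diagonal argument shows that $\big(\tfrac{1}{n} \mathsf{Shape}(\mathscr{T}^{e}_{n})\big)_{n}$ is tight for $\dloc$. Since the laws of the balls $\br{\,\cdot\,}_{h}$, $h \geq 1$, determine the law of a random locally finite \E-labelled pointed tree, it then suffices to prove that, for each fixed $h$, $\br{\tfrac{1}{n} \mathsf{Shape}(\mathscr{T}^{e}_{n})}_{h}$ converges in distribution to $\br{\mathsf{Shape}(\mathscr{T}^{u}_{\infty})}_{h}$ in the space of finite pointed \E-labelled trees of height at most $h$ with edge labels in $[0,1]$.

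For this I would condition on the shape. Conditionally on $\mathsf{Shape}(\mathscr{T}_{n})$, the edge labels of $\mathscr{T}^{e}_{n}$ form a uniform bijection onto $\{1, \dots, n-1\}$, so for a given shape $B$ of the radius-$h$ ball, with $m$ edges, dividing by $n$ the labels of those edges produces a uniformly random \E-labelling of $B$ by $m$ distinct elements of $\{\tfrac{1}{n}, \dots, \tfrac{n-1}{n}\}$. Since the empirical distribution of $\{\tfrac{1}{n}, \dots, \tfrac{n-1}{n}\}$ tends weakly to the uniform law on $[0,1]$, this random \E-labelled tree converges in distribution, as $n \to \infty$ with $B$ fixed, to $B$ equipped with i.i.d.\ uniform $[0,1]$ edge labels --- which is exactly the conditional law of the radius-$h$ ball of $\mathscr{T}^{u}_{\infty}$ given that its shape equals $B$. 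Testing against a bounded continuous functional $F$, one writes $\Es{F\big(\br{\tfrac{1}{n}\mathsf{Shape}(\mathscr{T}^{e}_{n})}_{h}\big)} = \sum_{B} \Pr{\br{\mathsf{Shape}(\mathscr{T}_{n})}_{h} \cong B}\, \Es{F(B^{(n)})}$, where $B^{(n)}$ denotes $B$ carrying the rescaled random labels above; the probabilities converge to $\Pr{\br{\mathscr{T}_{\infty}}_{h} \cong B}$ by the first step, and $\Es{F(B^{(n)})}$ converges to the integral of $F$ against the law of $B$ with i.i.d.\ uniform edge labels by the second. Tightness of the ball shapes lets one cut the sum into a large finite part and a tail of mass $< \epsilon$ uniformly in $n$, so limit and summation commute and the right-hand side tends to $\Es{F\big(\br{\mathsf{Shape}(\mathscr{T}^{u}_{\infty})}_{h}\big)}$, as wanted.

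The main obstacle should be precisely this last piece of bookkeeping: since $\mathsf{Poisson}(1)$ has unbounded support, infinitely many ball shapes occur with positive probability, so one must combine the \emph{weak} convergence of the shapes --- a statement that includes tightness, which is exactly why one invokes Kesten's theorem rather than mere pointwise convergence of the probabilities of individual shapes --- with the conditional convergence of the labels, while ruling out any escape of mass among the shapes. The remaining ingredients --- continuity of $\mathsf{Shape}$, the elementary sampling-without-replacement limit, and the passage from convergence of all balls plus tightness to $\dloc$-convergence --- are routine.
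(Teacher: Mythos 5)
Your proposal is correct and follows essentially the same route as the paper: shape convergence via the classical local limit theorem for conditioned critical BGW trees together with continuity of $\mathsf{Shape}$, and then a ``sampling without replacement $\to$ i.i.d.\ uniform'' argument for the edge labels. The paper dispatches the label step in one sentence, whereas you supply the conditioning-on-shape and tightness bookkeeping explicitly, but this is the same argument, just written out in more detail.
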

 \begin{proof}
   The convergence $\mathscr{T}_{n} \longrightarrow \mathscr{T}_{\infty}$ of plane non-labelled trees
   in distribution in the local topology is a classical result in random tree theory, see, e.g.,~\cite{Jan12b,AD14}.
   The  map $\mathsf{Shape}$ is clearly continuous, 
   implying that $\mathsf{Shape}(\mathscr{T}_{n}) \longrightarrow \mathsf{Shape}(\mathscr{T}_{\infty})$ locally in distribution.
   We therefore only need to justify that, for each $h \ge 1$,
   the edge labels at height at most $h$ on the left-hand side 
   jointly converge to those on the right-hand side.
   This is however obvious, since, on the left-hand side, we have a uniform labeling with
   the numbers $1/n$, $2/n$, \dots, $(n-1)/n$, while on the right we have independent uniform label in $[0,1]$.
 \end{proof}

\subsection{Forgetting the vertex labels yields a random BGW tree}
\label{ssec:ELTree}

In this section, as an intermediate step to study $\TFFn$ and $\wTFFn$,
we consider a variant without vertex labels.
Namely, for a minimal factorization $F_n$,
we introduce  $\EEE(F_n)$,
the  pointed \E-labelled non-plane tree obtained from $\TFn$ 
by pointing the vertex with label $1$ and forgetting other vertex labels.
See \cref{fig:EL_Tree} for an example. 
As  shown by  \cite{Mos89},
the map $ \EEE$ is a bijection.
Since the proof is short and elegant, we include it here.

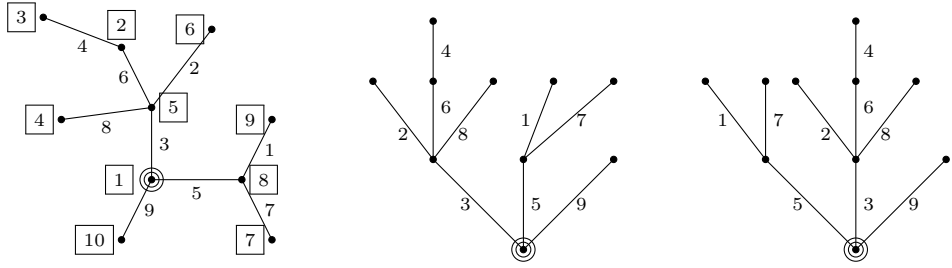
\begin{figure}[t]
\[
\begin{scriptsize}
\begin{tikzpicture}[scale=0.8]
\coordinate (1) at (0,0);
\coordinate (2) at (-.5,2.2);
\coordinate (3) at (-1.8,2.7);
\coordinate (4) at (-1.5,1);
\coordinate (5) at (0,1.2);
\coordinate (6) at (1,2.5);
\coordinate (7) at (2,-1);
\coordinate (8) at (1.5,0);
\coordinate (9) at (2,1);
\coordinate (10) at (-.5,-1);

\draw
(1) -- (5) node [midway,right] {3}
	(2) -- (3) node [midway,below] {4}
	(2) -- (5) node [midway,left] {6}
		(5) -- (6)  node [midway,right] {2}
		  (4) -- (5) node [midway,below] {8}
          (1) -- (10) node [midway,right] {9}
          (1) -- (8)	node [midway,below] {5}
          (8) -- (9)	node [midway,right] {1}
          (8) -- (7) node [midway,right] {7};
\draw (1) circle (3.5pt);
\draw (1) circle (5.5pt);

\draw[fill=black]
	(1) circle (1.5pt)
	(2) circle (1.5pt)
	(3) circle (1.5pt)
	(4) circle (1.5pt)
	(5) circle (1.5pt)
	(6) circle (1.5pt)
	(7) circle (1.5pt)
	(8) circle (1.5pt)
	(9) circle (1.5pt)
	(10) circle (1.5pt);

\draw
	(1) node[left, xshift=-0.5em] {\framebox{$1$}}
	(2) node[above] {\framebox{$2$}}
	(3) node[left] {\framebox{$3$}}
	(4) node[left] {\framebox{$4$}}
	(5) node[right] {\framebox{$5$}}
	(6) node[left] {\framebox{$6$}}
	(7) node[left] {\framebox{$7$}}
	(8) node[right] {\framebox{$8$}}
	(9) node[left] {\framebox{$9$}}
	(10) node[left] {\framebox{$10$}};
\end{tikzpicture}
\qquad \qquad
\begin{tikzpicture}[scale=0.8]
\coordinate (1) at (0,0);
\coordinate (2) at (-1.5,2.8);
\coordinate (3) at (-1.5,3.8);
\coordinate (4) at (-.5,2.8);
\coordinate (5) at (-1.5,1.5);
\coordinate (6) at (-2.5,2.8);
\coordinate (7) at (1.5,2.8);
\coordinate (8) at (0,1.5);
\coordinate (9) at (.5,2.8);
\coordinate (10) at (1.5,1.5);

\draw
(1) -- (5) node [midway,left] {3}
	(2) -- (3) node [midway,right] {4}
	(2) -- (5) node [midway,above right] {6}
		(5) -- (6)  node [midway,below] {2}
		  (4) -- (5) node [midway,below] {8}
          (1) -- (10) node [midway,right] {9}
          (1) -- (8)	node [midway,right] {5}
          (8) -- (9)	node [midway,left] {1}
          (8) -- (7) node [midway,right] {7};
\draw[fill=black]
	(1) circle (1.5pt)
	(2) circle (1.5pt)
	(3) circle (1.5pt)
	(4) circle (1.5pt)
	(5) circle (1.5pt)
	(6) circle (1.5pt)
	(7) circle (1.5pt)
	(8) circle (1.5pt)
	(9) circle (1.5pt)
	(10) circle (1.5pt);
\draw (1) circle (3.5pt);
\draw (1) circle (5.5pt);
\end{tikzpicture}
\qquad \qquad
\begin{tikzpicture}[scale=0.8]
\coordinate (1) at (0,0);
\coordinate (2) at (0,2.8);
\coordinate (3) at (0,3.8);
\coordinate (4) at (1,2.8);
\coordinate (5) at (0,1.5);
\coordinate (6) at (-1,2.8);
\coordinate (7) at (-1.5,2.8);
\coordinate (8) at (-1.5,1.5);
\coordinate (9) at (-2.5,2.8);
\coordinate (10) at (1.5,1.5);

\draw
(1) -- (5) node [midway,right] {3}
(2) -- (3) node [midway,right] {4}
	(2) -- (5) node [midway,above right] {6}
		(5) -- (6)  node [midway,below] {2}
		  (4) -- (5) node [midway,below] {8}
          (1) -- (10) node [midway,right] {9}
          (1) -- (8)	node [midway,left] {5}
          (8) -- (9)	node [midway,left] {1}
          (8) -- (7) node [midway,right] {7};
\draw[fill=black]
	(1) circle (1.5pt)
	(2) circle (1.5pt)
	(3) circle (1.5pt)
	(4) circle (1.5pt)
	(5) circle (1.5pt)
	(6) circle (1.5pt)
	(7) circle (1.5pt)
	(8) circle (1.5pt)
	(9) circle (1.5pt)
	(10) circle (1.5pt);
\draw (1) circle (3.5pt);
\draw (1) circle (5.5pt);
\end{tikzpicture}\end{scriptsize}
\]
\caption{Construction of the trees associated with the  factorization
$F_{n}=((8,9) \, (5,6)\, (1,5)\, (2,3)\, (1,8)\, (2,5)\, (7,8)\, (4,5)\, (1,10))$ of $(1,2, \dots,10)$.
On the left, we have the pointed non-plane  \EV-labelled  tree  $\TFn$. 
On the middle and on the right, we have represented $\EEE(F_{n})$, which is obtained  from $\TFn$ by forgetting the vertex labels. 
To emphasize that $\EEE(F_{n})$ is non plane,
and we have represented the \emph{same} tree $\EEE(F_{n})$ with two different embeddings.}
  \label{fig:EL_Tree}
\end{figure}

\begin{proposition}[Moszkowski]
\label{prop:Mos}The map $\EEE$ is a bijection from $\mathfrak M_n$ to the set of all pointed \E-labelled non-plane trees with $n$ vertices (where edges are labelled from $1$ to $n-1$).
\end{proposition}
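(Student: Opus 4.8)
The plan is to show that $\EEE$ is a bijection by exhibiting an explicit inverse, constructing a minimal factorization from a pointed $\E$-labelled non-plane tree on $n$ vertices with edges labelled $1,\dots,n-1$. Given such a tree $\theta$, I first want to recover a \emph{canonical vertex labelling} by the set $\{1,\dots,n\}$, where the pointed vertex is forced to receive the label $1$; the key observation is that the data of the edge labels together with the pointed vertex determines a unique way to propagate labels. Concretely, one processes the edges in increasing order of their labels; at the moment we examine edge $e$ with label $k$, one of its two endpoints already carries a (partial) vertex label (starting from the pointed vertex labelled $1$), and the rule will assign to the other endpoint a label determined by reading the current partial product $\tb_1 \cdots \tb_{k-1}$. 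Equivalently: set $\sigma_0 = \id$, and for $k=1,\dots,n-1$ let $\tb_k$ be the transposition $(a_k, \sigma_{k-1}(a_k))$ where $a_k$ is the vertex label already assigned to the endpoint of the $k$-th edge that lies ``closer'' to the root (in the forest spanned by edges $1,\dots,k$); then define the label of the far endpoint and update $\sigma_k = \sigma_{k-1}\tb_k$. This recovers a factorization $F$ with $\EEE(F)=\theta$.

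The main steps I would carry out are: (1) verify that at each stage exactly one endpoint of the newly-added edge has already been labelled, so the reconstruction is well-defined — this follows because the edges labelled $1,\dots,k$ form a subforest of a tree, hence adding edge $k+1$ either connects a new leaf (one endpoint new, one old) and never creates a cycle; one also needs that the component containing the root grows to cover everything, which holds since the whole edge set is a spanning tree. (2) Check that $\tb_1 \cdots \tb_{n-1}$ is the full $n$-cycle $(1,2,\dots,n)$: here the clean way is to track the functional graph of the partial product and show, by induction on $k$, that $\sigma_k = \tb_1\cdots\tb_k$ is an $(k+1)$-cycle on the vertex labels assigned so far (multiplying a $j$-cycle by a transposition sharing exactly one point with its support yields a $(j+1)$-cycle), and that the cyclic order is precisely the one read off from the tree. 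When $k=n-1$ this gives an $n$-cycle, and one checks it is the \emph{standard} $n$-cycle because the labelling rule was designed so that $\sigma_{k-1}(a_k)$ is the successor of $a_k$. (3) Conclude that $F \in \mathfrak{M}_n$ and that $F \mapsto \theta \mapsto F$ and $\theta \mapsto F \mapsto \theta$ are mutual inverses, the latter being essentially a matter of unwinding definitions since the edge-label order and the pointing are preserved.

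I expect the main obstacle to be step (2): making precise the inductive invariant that couples the combinatorial cycle structure of $\tb_1\cdots\tb_k$ with the geometry of the subtree spanned by edges $1,\dots,k$, and in particular checking that the resulting $n$-cycle is the identity-normalized one $(1,2,\dots,n)$ rather than merely \emph{some} $n$-cycle. The cleanest formulation is probably to argue that the cyclic sequence $1 = \sigma_k(0\text{-th term}), \sigma_k(1), \sigma_k^2(1), \dots$ of vertex labels traversed by $\sigma_k$ coincides with the order in which labels are \emph{created} by the reconstruction, and that this creation order is forced to be $1,2,3,\dots$ because at step $k$ the new label is always defined to be the image under $\sigma_{k-1}$ of an already-created one; a short separate lemma that multiplying a cycle by a transposition meeting it in one point splices in the new point right after that point handles the cycle-structure bookkeeping. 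Once this invariant is in place, bijectivity is immediate, and I would also remark (as a sanity check matching $|\mathfrak{M}_n| = n^{n-2}$) that pointed $\E$-labelled non-plane trees on $n$ vertices with edge labels $1,\dots,n-1$ are counted by $n \cdot n^{n-2}/n = n^{n-2}$, the pointing and the $(n-1)!$ edge-labellings exactly accounting for the passage from Cayley's $n^{n-2}$.
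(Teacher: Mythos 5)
Your approach --- constructing an explicit inverse by processing edges in increasing label order and propagating vertex labels outward from the pointed vertex --- is genuinely different from the paper's proof, which instead reduces to D\'enes' classical bijection between fully (vertex-and-edge) labelled trees and minimal factorizations of \emph{all} $n$-cycles, and then uses the fact that a permutation fixing $1$ conjugates a given $n$-cycle to $(1,2,\dots,n)$ in exactly one way. Unfortunately, your construction has a gap already at step~(1).

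The claim that ``at each stage exactly one endpoint of the newly-added edge has already been labelled'' is false. Adding an edge to a forest need not attach a new vertex to the component of the pointed vertex: the new edge may merge two components neither of which contains the root, or join an isolated vertex to a non-root component. In those cases, when edge $k$ is processed, \emph{neither} endpoint carries a vertex label, so $\tb_k = (a_k,\sigma_{k-1}(a_k))$ is undefined. The smallest counterexample is $n=3$ with the star on $\{a,b,c\}$ centered at $c$, pointed at $a$, edge $\{b,c\}$ labelled $1$ and edge $\{a,c\}$ labelled $2$: when you process edge~$1$, only $a$ has a vertex label, and $a$ is not an endpoint. The supporting reasoning ``adding edge $k+1$ either connects a new leaf (one endpoint new, one old)'' mistakenly treats an arbitrary subforest as a single growing tree plus isolated vertices, which is not the case. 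Since step~(1) fails, the inductive invariant of step~(2) cannot even be stated, because the transpositions $\tb_k$ are not available in edge-label order. A working constructive inverse must use a different (root-to-leaves, label-adaptive) traversal --- this is precisely what the $\Find$ algorithm of Section~2.3 does, though the paper invokes it only for the relabelling, keeping the bijectivity proof itself short via D\'enes' theorem and the uniqueness of the conjugating permutation fixing~$1$.
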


\begin{proof}
  Let $T$ be a non-plane \EV-labelled  tree with $n$ vertices (with vertices  labelled from $1$ to $n$ and
  edges from $1$ to $n-1$).
  We associate with the tree $T$ the sequence $ \mathcal{S}(T)$ of the $n-1$ transpositions  $(a_1,b_1) \dots (a_{n-1},b_{n-1})$, 
  where $a_i$ and $b_i$ are the vertex-labels of the extremities of the edge labelled $i$.
  It is known since the work of \cite{Den59} that
  the function $ \mathcal{S}$ is a bijection between \EV-labelled trees with $n$ vertices
  and minimal factorizations of \emph{all} cyclic permutations of length $n$.

Observe that if we forget the vertex-labels of $T$ except $1$, then
there is a unique way to relabel the $n-1$ other vertices to get a factorization of the cycle $(1,2,\dots,n)$.
Indeed, relabelling these labels amounts to conjugating the associated cyclic permutation by a permutation fixing $1$
and there is always a unique way to conjugate a cyclic permutation by a permutation fixing $1$
to get the cycle $(1,2,\dots,n)$.
\end{proof}

Recall that we denote by $\mathscr F_n$ is a uniform random minimal factorization in $\mathfrak M_n$. 
It turns out that the law of $\EEE(\mathscr F_n)$ can be related to a $\BGW$ tree as follows.
As in the previous section,
let $\mathscr{T}^{\mathsf{e}}_{n}$ be the plane  \E-labelled tree obtained 
from a $\mathsf{Poisson}(1)$ $\BGW$ tree $\mathscr T_n$ by labelling its edges from $1$ to $n-1$ in a uniform way. 
Building on Proposition \ref{prop:Mos}, we can now prove the following result. 

\begin{proposition}
\label{prop:MosRandom}
The random  trees $\EEE(\mathscr F_n)$ and $\mathsf{Shape}(\mathscr{T}^{\mathsf{e}}_{n})$ both follow the uniform distribution  on the set of all pointed non-plane \E-labelled trees with $n$ vertices (with edge label set $\llbracket n-1 \rrbracket$).
\end{proposition}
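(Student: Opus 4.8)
The plan is to prove the two uniformity statements separately; throughout, write $\mathcal E_n$ for the set of pointed \E-labelled non-plane trees with $n$ vertices whose edges are labelled by $\br{n-1}$, which is exactly the set appearing in the statement.

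The assertion for $\EEE(\mathscr F_n)$ is immediate: by \cref{prop:Mos} the map $\EEE$ is a bijection from $\mathfrak M_n$ onto $\mathcal E_n$, and the image of a uniform random element of a finite set under a bijection between finite sets is uniform on the target. So all the work is on the side of $\mathsf{Shape}(\mathscr T^{\mathsf e}_n)$, for which I would compute the law directly. Fix a \emph{plane} \E-labelled tree $T$ with $n$ vertices and edge labels $\br{n-1}$, and let $\tau$ be the plane tree obtained from $T$ by erasing the edge labels. Conditionally on $\mathscr T_n = \tau$, the edge labelling producing $\mathscr T^{\mathsf e}_n$ is uniform among the $(n-1)!$ bijections, and $\mathscr T_n$ has law $\mathrm{BGW}^\mu_n$ with $\mu=\mathsf{Poisson}(1)$, so that $\mu(k)=e^{-1}/k!$ and $\mathrm{BGW}^\mu(\tau)=\prod_{u\in\tau}\mu(k_u)=e^{-n}\prod_{u\in\tau}(k_u!)^{-1}$ (using that $\tau$ has $n$ vertices). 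Hence
\[\P\big(\mathscr T^{\mathsf e}_n = T\big)=\frac{1}{(n-1)!}\cdot\frac{\mathrm{BGW}^\mu(\tau)}{\mathrm{BGW}^\mu(\mathbb A_n)}=\frac{e^{-n}}{(n-1)!\,\mathrm{BGW}^\mu(\mathbb A_n)}\cdot\frac{1}{\prod_{u\in\tau}k_u!},\]
and the key observation is that this depends on $T$ only through $\prod_{u}k_u!$, i.e.\ only through $\mathsf{Shape}(T)$.

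Next I would sum over the fibers of $\mathsf{Shape}$. Fix $t\in\mathcal E_n$: because the edge labels of $t$ are pairwise distinct, $t$ admits no non-trivial automorphism, so a planar embedding of $t$ is exactly a choice, at each vertex $u$, of a linear order on its $k_u$ children, and distinct choices give distinct plane \E-labelled trees; therefore $\#\{T:\mathsf{Shape}(T)=t\}=\prod_{u\in t}k_u!$. Every such $T$ satisfies $\prod_u k_u!=\prod_{u\in t}k_u!$, so summing the previous display over the fiber yields
\[\P\big(\mathsf{Shape}(\mathscr T^{\mathsf e}_n)=t\big)=\Big(\prod_{u\in t}k_u!\Big)\cdot\frac{e^{-n}}{(n-1)!\,\mathrm{BGW}^\mu(\mathbb A_n)}\cdot\frac{1}{\prod_{u\in t}k_u!}=\frac{e^{-n}}{(n-1)!\,\mathrm{BGW}^\mu(\mathbb A_n)},\]
which does not depend on $t$. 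This gives the uniformity of $\mathsf{Shape}(\mathscr T^{\mathsf e}_n)$ on $\mathcal E_n$ and finishes the proof.

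The argument is short, and the only point requiring care is the fiber count $\prod_{u\in t}k_u!$: it is precisely this factor that cancels the $\prod_u(k_u!)^{-1}$ produced by the Poisson weights, and it comes out clean only because the distinctness of the edge labels rigidifies the trees, so that no automorphism quotient intervenes. I would verify that rigidity point carefully; everything else is routine bookkeeping. (As a sanity check one can note that the common value $\frac{e^{-n}}{(n-1)!\,\mathrm{BGW}^\mu(\mathbb A_n)}$ must equal $1/\#\mathcal E_n = 1/n^{n-2}$, consistently with $|\mathfrak M_n|=n^{n-2}$.)
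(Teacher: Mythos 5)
Your proof is correct and takes a genuinely different route from the paper's. For the second assertion, the paper constructs a bijection $\phi$ from pointed non-plane $\E$-labelled trees to Cayley trees (send the pointed vertex to label $1$, and every other vertex $v$ to $\ell(v^{\leftarrow})+1$), observes that $\phi(\mathsf{Shape}(\mathscr{T}^{\mathsf{e}}_{n}))$ is the tree $\mathscr T_n$ with root labelled $1$ and the remaining vertices labelled uniformly, and then cites the known fact (e.g.\ van der Hofstad's book) that a uniformly vertex-labelled $\mathsf{Poisson}(1)$ BGW tree conditioned to have $n$ vertices is a uniform Cayley tree. You instead compute $\P(\mathscr T^{\mathsf e}_n = T)$ directly from the definition of $\mathrm{BGW}^\mu_n$ and the uniform edge labelling, obtaining a quantity proportional to $1/\prod_u k_u!$, and then count the fiber of $\mathsf{Shape}$ over a fixed $t \in \mathcal E_n$ as exactly $\prod_u k_u!$ (justified by the rigidity coming from distinct edge labels), so the two factors cancel. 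Your approach is self-contained — you essentially reprove the BGW/Cayley correspondence — whereas the paper outsources it to a reference; the cost is that you must be careful about the automorphism-free fiber count, which you correctly isolate as the only delicate step. Both arguments are clean; yours is marginally longer but avoids introducing the auxiliary bijection $\phi$ and the appeal to the literature.
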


\begin{proof}
Since $\EEE$ is a bijection from the set $\mathfrak M_n$ 
to the set of all pointed non-plane \E-labelled  trees with $n$ vertices,
it is immediate that $\EEE(\mathscr F_n)$ follows the uniform distribution of the latter set.

Then, note that there is a natural bijection $\phi$ 
between pointed non-plane \E-labelled trees  (with edge label set $\llbracket n-1 \rrbracket$)
and non-plane \V-labelled trees (with vertex set $ \llbracket n \rrbracket$, also known as Cayley trees): 
label the pointed vertex by $1$, and then label every other vertex $v$ with $\ell(v^{\leftarrow})+1$,
where $\ell(v^{\leftarrow})$ is the label of the edge adjacent to $v$ closest to the pointed vertex.

By construction, $\phi(\mathsf{Shape}(\mathscr{T}^{\mathsf{e}}_{n}))$ 
is obtained from $\mathscr {T}_{n}$ by labelling the root with $1$ and other vertices uniformly with numbers
from $1$ to $n$ (and forgetting the root and the planar structure).
It is well-known (see the second proof of Theorem 3.17 in \cite{Hof16})  
that this is a uniform random non-plane \V-labelled tree with $n$ vertices.
Applying $\phi^{-1}$, it follows that $\mathsf{Shape}(\mathscr{T}^{\mathsf{e}}_{n})$
is a uniform random pointed non-plane \E-labelled trees with $n$ vertices.
This completes the proof.
\end{proof}

By combining the local convergence of $\mathsf{Shape}(\mathscr{T}^{\mathsf{e}}_{n})$
(\cref{Prop:Local_Cv_BGW}) with \cref{prop:MosRandom}, we get the following:

\begin{proposition}
  \label{Prop:Local_Without_Vertex_Labels}
The convergence
\begin{equation}
  \frac{1}{n} \cdot \EEE(\mathscr F_n)  \quad 
 \mathop{\longrightarrow}^{(d)}_{n \rightarrow \infty} \quad 
   \mathsf{Shape} (\mathscr{T}^{u}_{\infty}) 
   \label{eq:Local_Without_Vertex_Labels}
 \end{equation}
holds in distribution in the set of all locally finite, non-plane, pointed, \E-labelled trees equipped with $\dloc$.
\end{proposition}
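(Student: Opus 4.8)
The plan is to combine the two ingredients that have just been assembled: the distributional identity of Proposition~\ref{prop:MosRandom} and the local scaling limit of Proposition~\ref{Prop:Local_Cv_BGW}. Concretely, since Proposition~\ref{prop:MosRandom} asserts that $\EEE(\mathscr F_n)$ and $\mathsf{Shape}(\mathscr{T}^{\mathsf e}_n)$ have the \emph{same} law on the (finite) set of pointed non-plane $\E$-labelled trees with $n$ vertices, the same identity in law persists after any fixed measurable transformation of these random trees; in particular, multiplying all edge labels by $1/n$ gives
\[
\tfrac1n \cdot \EEE(\mathscr F_n) \;\mathop{=}^{(d)}\; \tfrac1n \cdot \mathsf{Shape}(\mathscr{T}^{\mathsf e}_n) \;=\; \tfrac1n \,\mathsf{Shape}(\mathscr{T}^{e}_n),
\]
where the last equality is just the notational convention recalled before Proposition~\ref{Prop:Local_Cv_BGW} (the operation $\tfrac1n\cdot$ rescales edge labels, and there are no vertex labels here). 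So the left-hand side of \eqref{eq:Local_Without_Vertex_Labels} is, for each $n$, equal in distribution to $\tfrac1n \mathsf{Shape}(\mathscr{T}^e_n)$.

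Next I would invoke Proposition~\ref{Prop:Local_Cv_BGW}, which states exactly that $\tfrac1n \mathsf{Shape}(\mathscr{T}^e_n) \to \mathsf{Shape}(\mathscr{T}^u_\infty)$ in distribution in the space of locally finite, non-plane, pointed, $\E$-labelled trees equipped with $\dloc$. Since convergence in distribution only depends on the law of the sequence, and $\tfrac1n\cdot\EEE(\mathscr F_n)$ has the same law as $\tfrac1n\mathsf{Shape}(\mathscr{T}^e_n)$ for every $n$, the sequence $\tfrac1n\cdot\EEE(\mathscr F_n)$ converges in distribution to the same limit $\mathsf{Shape}(\mathscr{T}^u_\infty)$ in the same topological space. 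This is precisely the assertion \eqref{eq:Local_Without_Vertex_Labels}, and the proof is complete.

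There is essentially no obstacle here: the proposition is a formal ``transport'' of Proposition~\ref{Prop:Local_Cv_BGW} along the equality in law provided by Proposition~\ref{prop:MosRandom}, so the only thing to be careful about is bookkeeping---that the map ``rescale edge labels by $1/n$'' is the same measurable map applied on both sides, that it lands in the stated space of $\E$-labelled trees, and that the notation $\tfrac1n\cdot$ for $\EEE(\mathscr F_n)$ matches the notation $\tfrac1n\mathsf{Shape}(\mathscr T^e_n)$ used in Proposition~\ref{Prop:Local_Cv_BGW}. If one wanted to be fully explicit one could add the one-line remark that equality in distribution for each fixed $n$, together with convergence in distribution of one of the two sequences, immediately yields convergence in distribution of the other to the same limit; this is standard and needs no separate argument.
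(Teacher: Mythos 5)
Your proof is correct and is exactly the paper's argument: the paper simply states that the result follows ``by combining the local convergence of $\mathsf{Shape}(\mathscr{T}^{\mathsf{e}}_{n})$ (\cref{Prop:Local_Cv_BGW}) with \cref{prop:MosRandom},'' and your write-up spells out precisely that transport of the equality in law through the deterministic rescaling map followed by the convergence-in-distribution conclusion. Nothing is missing and nothing differs in substance.
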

 We insist on the fact that the above proposition is a convergence of {\em edge-labelled} trees:
 indeed, the objects in \eqref{eq:Local_Without_Vertex_Labels} do not carry vertex labels.
The difficulty is now to insert the vertex labels in the above proposition.
This is not as easy as for the edge labels, since in $\Tree(\mathscr F_n)$,
the vertex labels are determined by the edge labels 
(otherwise $\EEE$ would not be a bijection).
We will see in the next sections that the vertex labels
can actually be recovered by a local exploration algorithm.
This enables to define a labelling procedure on infinite tree $\mathscr{T}^{u}_{\infty}$,
giving a limit for the \EV-labelled tree $\wTFFn$.

\subsection{The labelling algorithm}
\label{sec:relabelling}

Consider a  {finite} \E-labelled tree $\tau$, either plane, or non-plane pointed.

If $k \ge 1$ and $\tau$ has at least $k$ vertices, we define a procedure $\Find_{k}(\tau,\bm{\ell})$ which successively assigns values $1,2,\ldots,k$ to $k$ vertices of $\tau$ as follows. The value $1$ is given to the pointed vertex.
Assume that $i$ has been assigned and that we want to assign $i+1$.
Among all edges adjacent to $i$ in $\tau$, denote by $e_{1}$ the one with the smallest label, and set $\ell_{1}=\ell_{e_{1}}$.
Call $j_1$ the other extremity of $e_{1}$ and consider all edges $e$ adjacent to $j_1$ with
$\ell_{e}> \ell_1$.
\begin{itemize}
\item
If there are none, then assign value $i+1$ to $j_1$.
\item 
 Otherwise, among all such edges, denote by $e_{2}$ the one with smallest label, and set $\ell_{2}=\ell_{e_{2}}$ (in particular $\ell_{2}>\ell_{1}$). Call $j_{2}$ the other extremity of $e_{2}$. We then iterate the process: if there is no edge adjacent to $j_{2}$ with label greater than $\ell_{2}$, then we assigne value $i+1$ to the vertex $j_{2}$. Otherwise, among all edges adjacent to $j_{2}$ with labels greater than $\ell_{2}$, we consider the one with smallest label, and so one.
\end{itemize}
We stop the procedure when $k$ has been assigned. Formally, this is described in Algorithm \ref{alg:find}.
\begin{algorithm}
\caption{Assign values $1,2,\ldots,k$ to $k$ vertices of $\tau$}
\begin{algorithmic} \label{alg:find}
\STATE $\textrm{value(root)} \leftarrow 1 $
\STATE $i \leftarrow 1$
\WHILE{$i \leq k$}
\STATE CurrentVertex  $\leftarrow$ vertex with value $i$
\STATE HigherVertices $\leftarrow$ all neighbours of CurrentVertex
\WHILE{HigherVertices $\neq \emptyset$}
\STATE $x \leftarrow $ minimal value of $\ell_{\{\textrm{CurrentVertex},v\}}$ for $v$ in HigherVertices
\STATE $\textrm{CurrentVertex} \leftarrow$ vertex $v$ in HigherVertices such that $\ell_{\{\textrm{CurrentVertex},v\}}=x$
\STATE HigherVertices $\leftarrow$ all vertices $v$  such that $\ell_{\{\textrm{CurrentVertex},v\}}>x$ 
\ENDWHILE
\STATE $\textrm{value(CurrentVertex)} \leftarrow i+1$
\STATE $i \leftarrow i+1$
\ENDWHILE
\end{algorithmic}
\end{algorithm}

\begin{example}
  We  explain in detail how the algorithm runs on the pointed \E-labelled tree
  on the left part of \cref{fig:Relabel}.
  The result is the middle picture in \cref{fig:Relabel}.

  First the root is assigned value $\framebox{1}$.
  The edge adjacent to the root with smallest label is the left-most one,
  so that  in this case $\ell_{e_1}=.4$ and $j_1$ is the left-most vertex at height $1$.
  Continuing the process, $j_1$ has two adjacent edges of labels bigger than $\ell_{e_1}=.4$.
  We pick the one with smallest label $e_{2}$, here $\ell_{e_2}=.7$,
  and call $j_2$ its other extremity 
  (that is the second left-most vertex at height $2$).
  Now, $j_2$ has no adjacent edge with label bigger than $\ell_{e_2}=.7$,
  so we assign value $\framebox{2}$ to $j_2$.
  Starting now from $\framebox{2}$, its adjacent edge with smallest label
  is the edge $e_{3}$ with label $.5$ going to a leaf.
  This leaf has no other adjacent edges, so in particular none with label bigger than $.5$.
  We therefore assign value $\framebox{3}$ to this leaf.
  Finally, in order to assign the last value, starting from $\framebox{3}$, 
  we go through the edge $e_{3}$, go through $e_{2}$,
  then go through the edge with label $.9$
  and arrive to a leaf which is given value $\framebox{4}$. The procedure $\Find_4$ is over.
\end{example}

  \begin{figure}[th]
\[\begin{tikzpicture}[scale=0.7]
\coordinate (1) at (0,0);
\coordinate (2) at (-1.5,2.8);
\coordinate (3) at (-1.5,3.8);
\coordinate (4) at (-.2,2.8);
\coordinate (5) at (-1.5,1.5);
\coordinate (6) at (-2.8,2.8);
\coordinate (7) at (1.5,2.8);
\coordinate (8) at (0,1.5);
\coordinate (9) at (.5,2.8);
\coordinate (10) at (1.5,1.5);

\draw
(1) -- (5) node [midway,left] {.4}
	(2) -- (3) node [midway,right] {.5}
	(2) -- (5) node [midway,above right] {.7}
		(5) -- (6)  node [midway,below] {.2}
		  (4) -- (5) node [midway,below] {.9}
          (1) -- (10) node [midway,right] {.95}
          (1) -- (8)	node [midway,right] {.6}
          (8) -- (9)	node [midway,left] {.1}
          (8) -- (7) node [midway,right] {.8};
\draw (1) circle (3.5pt);
\draw (1) circle (5.5pt);
\draw[fill=black]
	(1) circle (1.5pt)
	(2) circle (1.5pt)
	(3) circle (1.5pt)
	(4) circle (1.5pt)
	(5) circle (1.5pt)
	(6) circle (1.5pt)
	(7) circle (1.5pt)
	(8) circle (1.5pt)
	(9) circle (1.5pt)
	(10) circle (1.5pt);
\end{tikzpicture}
\qquad 
\begin{tikzpicture}[scale=0.7]
\coordinate (1) at (0,0);
\coordinate (2) at (-1.5,2.8);
\coordinate (3) at (-1.5,3.8);
\coordinate (4) at (-.2,2.8);
\coordinate (5) at (-1.5,1.5);
\coordinate (6) at (-2.8,2.8);
\coordinate (7) at (1.5,2.8);
\coordinate (8) at (0,1.5);
\coordinate (9) at (.5,2.8);
\coordinate (10) at (1.5,1.5);
\draw
(1) -- (5) node [midway,left] {\small $e_1$}
	(2) -- (3) node [midway,right] {\small \hspace{-1.5mm} $e_{3}$}
    (2) -- (5) node [midway, right] {\small \hspace{-1.5mm} $e_2$}
		(5) -- (6)  
		  (4) -- (5) 
          (1) -- (10) 
          (1) -- (8)	
          (8) -- (9)	
          (8) -- (7); 
\draw[fill=black]
	(1) circle (1.5pt)
	(2) circle (1.5pt)
	(3) circle (1.5pt)
	(4) circle (1.5pt)
	(5) circle (1.5pt)
	(6) circle (1.5pt)
	(7) circle (1.5pt)
	(8) circle (1.5pt)
	(9) circle (1.5pt)
	(10) circle (1.5pt);
\draw (1) circle (3.5pt);
\draw (1) circle (5.5pt);
\draw
	(1) node[left] {$\framebox{1}$ \hspace{0.5mm} }
	(2) node[right] { $j_2$}
	(2) node[left] {$\framebox{2}$}
	(3) node[left] {$\framebox{3}$}
	(4) node[above] {$\framebox{4}$}
    (5) node[left] {$j_1$};
\end{tikzpicture}
\qquad
\begin{tikzpicture}[scale=0.7]
\coordinate (1) at (0,0);
\coordinate (2) at (-1.5,2.8);
\coordinate (3) at (-1.5,3.8);
\coordinate (4) at (-.2,2.8);
\coordinate (5) at (-1.5,1.5);
\coordinate (6) at (-2.8,2.8);
\coordinate (7) at (1.5,2.8);
\coordinate (8) at (0,1.5);
\coordinate (9) at (.5,2.8);
\coordinate (10) at (1.5,1.5);
\draw
(1) -- (5) node [midway,left] {\small $e_1$}
	(2) -- (3) node [midway,right] {\small \hspace{-1.5mm} $e_{3}$}
    (2) -- (5) node [midway, right] {\small \hspace{-1.5mm} $e_2$}
		(5) -- (6)  
		  (4) -- (5) 
          (1) -- (10) 
          (1) -- (8)	
          (8) -- (9)	
          (8) -- (7); 
\draw[fill=black]
	(1) circle (1.5pt)
	(2) circle (1.5pt)
	(3) circle (1.5pt)
	(4) circle (1.5pt)
	(5) circle (1.5pt)
	(6) circle (1.5pt)
	(7) circle (1.5pt)
	(8) circle (1.5pt)
	(9) circle (1.5pt)
	(10) circle (1.5pt);
\draw (1) circle (3.5pt);
\draw (1) circle (5.5pt);
\draw
	(10) node[right] {$\framebox{0}$ \hspace{0.5mm} }
	(9) node[above] {$\framebox{-1}$}
	(8) node[left] {$\framebox{-2}$}
	(7) node[right] {$\framebox{-3}$}
    (6) node[above] {$\framebox{-4}$};
\end{tikzpicture}
\]
\caption{On the left: a plane  tree $\tau$ with edge-labels $\bm{\ell}$.
In the middle: $\Find_4(\tau,\bm{\ell})$.
On the right: $\OFind_4(\tau,\bm{\ell})$.}
    \label{fig:Relabel}
  \end{figure}

Note that,  by construction,  this algorithm does not use the planar structure if $\tau$ is a plane tree. 
More precisely, if $(\tau,\ell)$ is a plane \E-labelled tree, 
then we have the commutation relation
$\Find_{k}(\mathsf{Shape}(\tau),\ell)=\mathsf{Shape}(\Find_{k}(\tau,\ell))$.

Recall from Section \ref{ssec:ELTree} that with a minimal factorization $F_n$  we have associated two different trees $\TFn$ (which is a non-plane  \EV-labelled tree) and $\EEE(F_n)$ (which is the pointed \E-labelled non-plane tree, obtained from $\Tree(F_n)$ by forgetting vertex labels).
The following lemma explains how to go from $\EEE(F_n)$ to $\TFn$,
using the above-defined algorithm.

\begin{lemma}
  \label{lem:Find}
  Let $F_n$ be a minimal factorization of $(1,2,\dots,n)$. 
  We have the identity $\Find_n(\EEE(F_n))=\TFn$.
\end{lemma}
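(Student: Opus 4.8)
\emph{Proof idea.} The plan is to show that the inner exploration loop of Algorithm~\ref{alg:find} implements, on the vertex labels of $\TFn$, the cyclic permutation $(1,2,\dots,n)$; the lemma then follows by a direct induction. Write $F_n=(\tau_1,\dots,\tau_{n-1})$; by construction $\TFn$ has vertex set $\{1,\dots,n\}$, its edge labelled $i$ is the pair $\tau_i$, its pointed vertex is $1$, and $\tau_1\tau_2\cdots\tau_{n-1}=(1,2,\dots,n)$ by the defining property of $\mathfrak{M}_{n}$. Since $\EEE(F_n)$ retains only the label $1$ of the pointed vertex, and $\Find_n$ first sets this vertex's value to $1$ and then reconstructs the remaining values one at a time, it is enough to show: for every $i\in\{1,\dots,n-1\}$, the exploration started at the vertex of $\TFn$ labelled $i$ ends at the vertex labelled $i+1$. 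An immediate induction on $i$ (base case: the pointed vertex is labelled $1$) then gives that the value reconstructed by $\Find_n$ at each vertex agrees with its label in $\TFn$, i.e.\ $\Find_n(\EEE(F_n))=\TFn$; note also that the exploration never appeals to a planar structure, so it is irrelevant whether $\EEE(F_n)$ is viewed as plane or non-plane.

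To prove this last claim, I would fix $a\in\{1,\dots,n\}$ and follow the trajectory of $a$ in $F_n$: set $a_0=a$ and $a_k=\tau_k(a_{k-1})$ for $1\le k\le n-1$, so that $a_{n-1}=(1,2,\dots,n)(a)$. Let $0=:i_0<i_1<\dots<i_m$ be the steps at which the trajectory moves, set $i_{m+1}:=n$, and write $b_j:=a_{i_j}$, so that $b_0=a$, $b_m=(1,2,\dots,n)(a)$, and the trajectory is constant, equal to $b_j$, on $\{i_j,\dots,i_{j+1}-1\}$. The key steps are: (i) for $0\le j\le m$ and $i_j<k<i_{j+1}$ one has $b_j\notin\tau_k$, since the trajectory does not move at step $k$; (ii) for $0\le j\le m-1$ one has $b_j\in\tau_{i_{j+1}}$, and, since $\tau_{i_{j+1}}(b_j)=b_{j+1}\neq b_j$, the edge labelled $i_{j+1}$ joins $b_j$ and $b_{j+1}$; (iii) combining (i) and (ii), the smallest edge-label incident to $b_j$ that is strictly larger than $i_j$ is $i_{j+1}$ for each $j\le m-1$, while (by (i) with $j=m$) no edge incident to $b_m$ has a label larger than $i_m$. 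Matching this against the description of the exploration, (i)--(iii) say precisely that the exploration started from $a$ traverses the edges labelled $i_1<i_2<\dots<i_m$, visiting $b_1,\dots,b_m$ in turn, and then stops at $b_m=(1,2,\dots,n)(a)$. Taking $a=i$ yields $b_m=i+1$, which is the desired claim.

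The step I expect to be the crux is (iii): one must be sure that no edge incident to the current vertex $b_j$ carries a label strictly between $i_j$ and $i_{j+1}$, for otherwise the exploration --- which is forced to choose the smallest admissible label --- could leave the trajectory. This is exactly (i): between two consecutive moves the trajectory sits still, hence sees no incident edge with label in that range. The hypothesis that $F_n$ factorises the \emph{whole} cycle $(1,2,\dots,n)$, and not merely that its edge graph is a tree, is used only in identifying the endpoint of the exploration as $(1,2,\dots,n)(a)=a+1$. The rest is routine: each exploration terminates because the traversed labels strictly increase, and the degenerate cases ($n=1$, or a vertex of degree $1$) are covered by the same discussion, with $m\ge 1$ automatic for $n\ge 2$ since $\TFn$ is connected.
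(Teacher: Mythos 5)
Your proposal is correct and follows essentially the same route as the paper's proof: an induction on $i$, with the inductive step established by showing that the inner exploration loop of $\Find_n$, started at the vertex labelled $i$, retraces the trajectory of the element $i$ under the partial products $\tau_1\cdots\tau_k$ and hence terminates at the vertex labelled $(1,\dots,n)(i)=i+1$. The paper writes this out more informally (naming the successive labels $a<b<\cdots$ and the vertices $j_1,j_2,\dots,j_r$ as the algorithm produces them), whereas you set up the notation $i_1<\cdots<i_m$ and $b_0,\dots,b_m$ up front and isolate the three claims (i)--(iii); this is a cleaner bookkeeping of the same observation, namely that between two consecutive moves of the trajectory the current vertex is incident to no edge with an intermediate label, which is exactly what forces the greedy "smallest label $>$ previous label'' rule to follow the trajectory. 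Both proofs use the hypothesis that $F_n$ factors the full cycle only at the very end, to identify the exploration's endpoint; so the two arguments coincide in substance.
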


\begin{proof}
  By construction $\Find_n(\EEE(F_n))$ and $\TFn$ might only differ by their vertex labels.
We prove by induction that for all $i \le n$ the same vertex carries label $i$ in both trees.

  By definition, $\Find_n$ gives the value $1$ to the root,
  and the root of $\EEE(F_n)$ is the vertex that used to have value $1$ in $\TFn$.
  This proves the base case ($i=1$) of the induction.

  Fix $i \ge 1$ and assume that $\Find_n(\EEE(F_n))$ gives value $i$                 
    to the vertex $v_i$ of $\EEE(F_n)$ that used to have value $i$  in $\TFn$.
    By construction, to assign value $i+1$, $\Find_n(\EEE(F_n))$ first considers the edge adjacent
    to $v_i$ with minimum edge-label. Call $a$ this label. This means that the transposition $\tau_a$ in $F_n$
is of the form $(i \, j_1)$ for some $j_1$.
(This notation is consistent with  the construction of $\Find_n$.)
By minimality of $a$, the transpositions $\tau_1, \dots, \tau_{a-1}$ fix $i$.
Thus the partial product $\tau_1 \dots \tau_a$ maps $i$ on $j_1$.
We then want to see where $j_1$ is mapped when we apply the next transpositions
$\tau_{a+1}, \tau_{a+2}, \dots$
For this, we need to look for an edge adjacent to $j_1$ with a value bigger than $a$,
which is exactly what $\Find_n$ does.
If $b$ is the smallest value of such an edge and $j_2$ the other extremity 
of this edge
(again the notation is consistent with the one of the construction of $\Find_n$),
then $\tau_1 \dots \tau_b$ maps $i$ to $j_2$.
The construction stops at $j_r$ when there is no edge adjacent to $j_r$ with a bigger value
than the previously considered edge, and then we know that
$\tau_1 \dots \tau_{n-1}$ maps $i$ to $j_r$.
But, since $\tau_1 \dots \tau_{n-1}=(1,2,\dots,n)$, we necessarily have
$j_r=i+1$. Thus $\Find_n$ precisely assigns $i+1$ to $j_r$ and this completes our induction step.
\end{proof}

\begin{remark}
\label{rem:compatible}
  Note that if one replaces the  labels $(\ell_{e})$
  of $\EEE(F_{n})$ with labels $(\ell'_{e})$ such that $\ell_{e}<\ell_{f}$ 
  if and only if $\ell'_{e}<\ell'_{f}$
  for every edges $e,f$ (we say that the edge-labellings are \emph{compatible}), 
  one obtains the same vertex labels when running $\Find_n$
  (since $\Find_n$ only uses the relative order of the labels).
\end{remark}

\cref{fig:EL_Tree,fig:Relabel} illustrate the previous lemma.
Indeed the trees in the middle of \cref{fig:EL_Tree} and in the left part of \cref{fig:Relabel}
are the same, with compatible edge-labellings.
The procedure $\Find_4$ indeed reassigns the labels $1$,$2$, $3$ and $4$ to some vertices of $\EEE(F_n)$,
as they are in $\Tree(F_n)$
(compare the left part of \cref{fig:EL_Tree} and the middle picture in \cref{fig:Relabel}).
\medskip

\cref{lem:Find} explains how to reconstruct $\Tree(F_n)$ from $\EEE(F_{n})$.
We are however interested in $\Tree(\widetilde{F_n})$, rather than $\Tree(F_n)$.
We therefore need to introduce a dual labelling procedure $\OFind_k$, which assigns non positive labels.
This procedure runs exactly as $\Find_k$ except that the order of label edges 
is taken as reversed. 
Namely we first look at the edge with {\em largest} label $\ell_1$ incident to $1$,
call $j_1$ its extremity and
then look for an edge of {\em largest} label $\ell_2<\ell_1$, etc.
Another difference is that $\OFind_k$ now assigns labels $0,-1,\dots,-k$  to successively found vertices.
An example of the outcome of this procedure is shown on \cref{fig:Relabel}.

The following lemma motivates the definition of this dual procedure.
\begin{lemma}
  \label{lem:OFind}
  Let $F_n$ be a minimal factorization of $(1,2,\dots,n)$.
  Then the tree $\OFind_{n-1}(\EEE(F_n))$ is obtained from $\TFn$
  by subtracting $n$ to every vertex label.
\end{lemma}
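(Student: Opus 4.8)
The plan is to transcribe the proof of \cref{lem:Find} almost verbatim, replacing throughout the identity $\tau_1 \cdots \tau_{n-1} = (1,2,\dots,n)$ by the equivalent identity $\tau_{n-1} \cdots \tau_1 = (1,2,\dots,n)^{-1}$ (transpositions being involutions); the latter permutation sends $m$ to $m-1$ for $2 \le m \le n$, and $1$ to $n$. First I would note that $\OFind_{n-1}$ modifies neither the underlying non-plane tree of $\EEE(F_n)$ nor its edge labels, so the two \EV-labelled trees in the statement agree except possibly on vertex labels; it therefore suffices to prove that, for every $0 \le k \le n-1$, the procedure $\OFind_{n-1}$ assigns the value $-k$ to the vertex of $\EEE(F_n)$ that carries the label $n-k$ in $\TFn$. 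Write $F_n = (\tau_1,\dots,\tau_{n-1})$ and, for $1 \le m \le n$, let $v_m$ denote the vertex of $\EEE(F_n)$ carrying label $m$ in $\TFn$, so that $v_1$ is the pointed vertex; set also $v_0 = v_n$.

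The heart of the argument is the following ``reversed'' analogue of the computation inside the proof of \cref{lem:Find}. Run one exploration of $\OFind$ starting from $v_m$: follow the edge of largest label $\ell_1$ incident to $v_m$, reaching its other endpoint $j_1$; then the edge of largest label $\ell_2 < \ell_1$ incident to $j_1$, reaching $j_2$; and so on, stopping at the vertex $j_r$ at which no incident edge has label smaller than the last one used. I claim that $j_r = v_{m-1}$. Indeed, by maximality of $\ell_1$ the transpositions $\tau_{n-1},\dots,\tau_{\ell_1+1}$ all fix $m$, so $\tau_{n-1}\cdots\tau_{\ell_1}$ maps $m$ to $j_1$; inductively, since $\ell_i$ is the largest label $< \ell_{i-1}$ incident to $j_{i-1}$, the transpositions $\tau_{\ell_{i-1}-1},\dots,\tau_{\ell_i+1}$ all fix $j_{i-1}$, so $\tau_{n-1}\cdots\tau_{\ell_i}$ maps $m$ to $j_i$; finally, when the exploration stops at $j_r$, the transpositions $\tau_{\ell_r-1},\dots,\tau_1$ all fix $j_r$, so $\tau_{n-1}\cdots\tau_1$ maps $m$ to $j_r$. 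Since $\tau_{n-1}\cdots\tau_1 = (1,2,\dots,n)^{-1}$, this forces $j_r = v_{m-1}$.

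I would then conclude by induction on $k$. The exploration producing the value $0$ starts from the pointed vertex $v_1$, so by the previous paragraph (with $m=1$) it ends at $v_0 = v_n$; hence $0$ is assigned to the vertex carrying $n = n-0$ in $\TFn$. For $1 \le k \le n-1$, the exploration producing the value $-k$ starts from the vertex carrying $-(k-1)$, which by the induction hypothesis is $v_{n-k+1}$; since $2 \le n-k+1 \le n$, the previous paragraph shows that this exploration ends at $v_{n-k}$, which is exactly what we wanted. In particular every vertex of $\EEE(F_n)$ eventually receives a label, the pointed vertex $v_1$ being assigned $-(n-1) = 1-n$ last, and the lemma follows.

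The only delicate point — and the main thing to get right — is the bookkeeping in the middle paragraph: one must check that between any two consecutively explored edge-labels, every transposition with index strictly in between fixes the current vertex (this is precisely where the maximality built into $\OFind$, the mirror of the minimality used by $\Find$, is needed), and that the two boundary stretches of indices, namely $\{\ell_1+1,\dots,n-1\}$ and $\{1,\dots,\ell_r-1\}$, are handled correctly. Once the identity $\tau_{n-1}\cdots\tau_1 = (1,2,\dots,n)^{-1}$ is in place, the remainder is a faithful transcription of the proof of \cref{lem:Find}.
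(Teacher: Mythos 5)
Your proof is correct. It establishes the lemma by transcribing the proof of \cref{lem:Find} with the order of the edge labels reversed, and the central identity $\tau_{n-1}\cdots\tau_1 = (1,2,\dots,n)^{-1}$ (which, under the paper's left-to-right multiplication convention, sends $m$ to $m-1$ for $2 \le m \le n$ and $1$ to $n$) is correctly derived and correctly used. The boundary bookkeeping --- the transpositions with indices in $\{\ell_1+1,\dots,n-1\}$ fixing $m$, those in $\{1,\dots,\ell_r-1\}$ fixing the final vertex, the convention $v_0=v_n$, and the observation that the pointed vertex is the last to receive a label, namely $1-n$ --- all check out.

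The paper's own proof is shorter: it associates with $F_n=(\tau_1,\dots,\tau_{n-1})$ the reversed factorization $r(F_n)=(\tau_{n-1},\dots,\tau_1)$ of the cycle $(1,n,n-1,\dots,2)$, notes that reversing the order of the edge labels in $\Tree(F_n)$ produces $\Tree(r(F_n))$ and turns the procedure $\OFind$ into $\Find$, and then simply invokes \cref{lem:Find} (with the remark that it holds for factorizations of any $n$-cycle). Both arguments exploit the same insight, namely that the $\OFind$-exploration follows the inverse cycle, but they package it differently: the paper's version is a crisp reduction to \cref{lem:Find}, whereas yours re-runs the induction from scratch. Your route is a bit longer, but has the small advantage of not needing the (unstated, though routine) generalization of \cref{lem:Find} to minimal factorizations of an arbitrary cycle.
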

\begin{proof}
  With a minimal factorization $F_n=(\tau_1,\dots,\tau_{n-1})$
  of the full cycle $(1,2,\dots,n)$,
  we can associate its reversed sequence $r(F_n)=(\tau_{n-1},\dots,\tau_1)$,
  which is a factorization of the full cycle $(1,n,n-1,\dots,2)$.
  Reversing the order of the edge labels in $\Tree(F_n)$
  yields $\Tree(r(F_n))$.
  By \cref{lem:Find} (which holds more generally for minimal factorizations of any cycle), the procedure $\OFind$ thus assigns {labels} 
  to $\EEE(F_n)$ in the order of the cycle $(1,n,n-1,\dots,2)$,
  i.e. $n$ gets the first label $0$ (after the pointed vertex),
  $n-1$ gets the second label $-1$, and so on.
  This proves the lemma.
\end{proof}

Combining \cref{lem:Find,lem:OFind},
we see that the tree $\wTFn$, in which we are interested, 
is obtained by composing both procedures.
\begin{corollary}
  \label{cor:Find}
  Let $F_n$ be a minimal factorization of $(1,2,\dots,n)$.
  Then \[\wTFn =\Find_{\lfloor \frac{n}{2} \rfloor} \circ 
  \OFind_{\lfloor \frac{n-1}{2} \rfloor} \big( \EEE(F_n) \big).\]
\end{corollary}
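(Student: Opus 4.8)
The plan is to combine \cref{lem:Find} and \cref{lem:OFind} directly, the only real point being to check that the two relabelling procedures do not interfere with each other. Recall that $\wTFn = \Tree(\widetilde{F_n})$ differs from $\Tree(F_n)$ precisely by the convention $\widetilde{a} = a$ if $a \le n/2$ and $\widetilde{a} = a - n$ otherwise; in other words, $\wTFn$ is obtained from $\Tree(F_n)$ by keeping the vertex labels $1, 2, \dots, \lfloor n/2 \rfloor$ untouched and replacing each label $a$ with $a > n/2$ (equivalently $\lfloor n/2 \rfloor < a \le n$) by $a - n$, which ranges over $-\lfloor (n-1)/2 \rfloor, \dots, -1, 0$.

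First I would invoke \cref{lem:Find}: running $\Find_n(\EEE(F_n))$ assigns the labels $1, 2, \dots, n$ to the vertices of $\EEE(F_n)$ exactly as in $\Tree(F_n)$. In particular, after the first $\lfloor n/2 \rfloor$ steps, the procedure $\Find_{\lfloor n/2 \rfloor}(\EEE(F_n))$ has assigned the labels $1, 2, \dots, \lfloor n/2 \rfloor$ to the correct vertices (those carrying these labels in $\Tree(F_n)$, hence also in $\wTFn$), and has assigned nothing to the remaining $\lceil n/2 \rceil$ vertices. Next I would invoke \cref{lem:OFind}: running $\OFind_{n-1}(\EEE(F_n))$ assigns labels $0, -1, \dots, -(n-1)$ to the vertices in the order of the cycle $(1, n, n-1, \dots, 2)$, that is, the vertex labelled $n$ in $\Tree(F_n)$ receives $0$, the vertex labelled $n-1$ receives $-1$, and so on. Therefore the first $\lfloor (n-1)/2 \rfloor$ steps of $\OFind$, i.e. $\OFind_{\lfloor (n-1)/2 \rfloor}(\EEE(F_n))$, assign labels $0, -1, \dots, -\lfloor (n-1)/2 \rfloor + 1$ to the vertices that in $\Tree(F_n)$ carry labels $n, n-1, \dots, n - \lfloor (n-1)/2 \rfloor + 1$; a small index check shows that the set of $\Tree(F_n)$-labels touched is exactly $\{a : \lfloor n/2 \rfloor < a \le n\}$ and that the new label assigned to the vertex originally labelled $a$ is precisely $a - n$. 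So $\OFind_{\lfloor (n-1)/2 \rfloor}(\EEE(F_n))$ carries exactly the negative (and zero) vertex labels of $\wTFn$, on the correct vertices, and no other vertex labels.

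The key observation that makes the composition legitimate is that $\Find_{\lfloor n/2 \rfloor}$ and $\OFind_{\lfloor (n-1)/2 \rfloor}$ act on complementary sets of vertices: the former labels the vertices with $\Tree(F_n)$-labels in $\{1, \dots, \lfloor n/2 \rfloor\}$, the latter labels those with $\Tree(F_n)$-labels in $\{\lfloor n/2 \rfloor + 1, \dots, n\}$, and $\lfloor n/2 \rfloor + \lceil n/2 \rceil = n$ with $\lceil n/2 \rceil = \lfloor (n-1)/2 \rfloor + 1$, so every vertex gets labelled exactly once. Moreover neither procedure reads the vertex labels it is in the process of assigning — each only uses the edge labels and the pointed vertex (see Algorithm \ref{alg:find}) — so applying one after the other, in either order, simply superposes the two partial labellings. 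Hence $\Find_{\lfloor n/2 \rfloor} \circ \OFind_{\lfloor (n-1)/2 \rfloor}(\EEE(F_n))$ has the underlying pointed \E-labelled tree of $\EEE(F_n)$ (equivalently of $\Tree(F_n)$), together with vertex labels that coincide with the positive labels of $\wTFn$ on one half of the vertices and with the non-positive labels of $\wTFn$ on the other half — that is, it is $\wTFn$.

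The only step requiring genuine care — and thus the main obstacle — is the bookkeeping with floors: one must verify that $\lfloor n/2 \rfloor$ applications of $\Find$ and $\lfloor (n-1)/2 \rfloor$ applications of $\OFind$ together hit all $n$ vertices exactly once, and that the shift $a \mapsto a - n$ matches the definition of $\widetilde{\cdot}$ for the labels produced by $\OFind$ (treating the cases $n$ even and $n$ odd). I would dispatch this by writing out the identities $\lfloor n/2 \rfloor + \lfloor (n-1)/2 \rfloor + 1 = n$ and noting that the $j$-th vertex found by $\OFind$ (for $1 \le j \le \lfloor (n-1)/2 \rfloor$) is the one labelled $n - j + 1$ in $\Tree(F_n)$ and receives label $-(j-1) = (n-j+1) - n$, so that indeed $\widetilde{a}$ is the $\OFind$-label of the vertex $\Tree(F_n)$-labelled $a$ whenever $a > n/2$, while $\widetilde{a} = a$ is the $\Find$-label when $a \le n/2$. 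Everything else is a direct citation of \cref{lem:Find} and \cref{lem:OFind}.
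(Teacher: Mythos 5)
Your approach is exactly the paper's: the paper dispatches this corollary with a single sentence (``Combining Lemmas~\ref{lem:Find} and~\ref{lem:OFind}, we see that \dots is obtained by composing both procedures'') and leaves all bookkeeping to the reader, which you have supplied. The structure is sound — the two procedures label complementary vertex sets, and since neither reads the vertex labels that either one places (\cref{alg:find} only consults edge labels and the pointed vertex), the composition simply superposes the two partial labellings. There is, however, a consistent off-by-one in your accounting of $\OFind$. The paper's convention is that $\OFind_k$ assigns the $k+1$ labels $0,-1,\dots,-k$ (see \cref{fig:Relabel}, where $\OFind_4$ places five framed labels). Hence $\OFind_{\lfloor (n-1)/2 \rfloor}$ labels $\lceil n/2 \rceil=\lfloor (n-1)/2 \rfloor + 1$ vertices — those with $\Tree(F_n)$-labels $n, n-1, \dots, \lfloor n/2\rfloor + 1$ — giving them labels $0,-1,\dots,-\lfloor (n-1)/2 \rfloor$, not $0,-1,\dots,-\lfloor (n-1)/2 \rfloor + 1$; your iteration ``$1 \le j \le \lfloor (n-1)/2 \rfloor$'' misses the last vertex (the one with $\Tree(F_n)$-label $\lfloor n/2\rfloor + 1$, which must receive $-\lfloor (n-1)/2 \rfloor$). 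This contradicts your own (correct) claim that the $\OFind$-touched set is $\{a : \lfloor n/2\rfloor < a \le n\}$ of size $\lceil n/2 \rceil$; once the index range is fixed, the identity $\lfloor n/2 \rfloor + \lceil n/2 \rceil = n$ closes and the proof is complete.
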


\subsection{Relabelling infinite trees.}
\label{ssec:relabel}
We now want to run the procedures $\Find$ and $\OFind$ on infinite, 
yet locally finite, trees. 
Note that in general, $\Find_{k}$ (and $\OFind_k$) 
may be ill-defined (since the inner ``while'' loop in Algorithm \ref{alg:find} may be infinite).

Let $T$ be a locally finite tree (either plane, or non-plane pointed) and 
let $\bm{\ell}=(\ell_{e})$ be a family of distinct real numbers indexed by the edges of $T$.
We say that $\bm{T}=(T,\bm{\ell})$ satisfies 
the property $(\mathcal{I})$ (resp. $(\mathcal{D})$) 
if there is no infinite increasing (resp. decreasing) path in $T$.
If $(T,\bm{\ell})$ satisfies $(\mathcal{I})$ (resp. $(\mathcal{D})$), 
then  it is clear that $\Find_{k}(T,\bm{\ell})$ (resp. $\OFind_{k}(T,\bm{\ell})$)
is well defined for every $k \geq 1$ by construction.

In this case, we can also define a 
procedure $\Find_{\infty}(T,\bm{\ell})$ (resp. $\OFind_{\infty}(T,\bm{\ell})$)
that assigns all labels in $\Z_{>0}$ (resp. $\Z_{<0}$) to the vertices of $T$.
Under a simple assumption, combining both procedures labels {\em all} vertices of an infinite tree, as explained in the following lemma, where we say that $T$ has \emph{one end} if for every $r \geq 0$, $T \backslash \llbracket T \rrbracket_{r}$ has a unique infinite connected component.
\begin{lemma}
  \label{lem:UniqueLabel}
  Let $T$ be an infinite locally finite \E-labelled  tree 
  with one end (either plane, or non-plane pointed), satisfying both $( \mathcal{I})$ and $( \mathcal{D})$.
  Then every vertex of the tree is either assigned
  a label by $\Find_\infty$ or by $\OFind_\infty$, but not by both.
\end{lemma}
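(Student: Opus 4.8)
The plan is to analyze the two procedures $\Find_\infty$ and $\OFind_\infty$ separately, showing first that each one \emph{does} assign labels to certain vertices, and then that together they cover every vertex exactly once. Throughout, I will use the hypothesis that $T$ has one end in the following crucial way: for every $r \geq 0$, the set $T \setminus \br{T}_r$ has a unique infinite connected component; equivalently, the ``infinite part'' of $T$ seen from the pointed vertex $1$ is a single semi-infinite path (the spine), with finite bushes attached along it. This will be the structural backbone of the argument.

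\textbf{Step 1: each vertex visited by $\Find_\infty$ or $\OFind_\infty$ gets exactly one label.} Since $(T,\bm\ell)$ satisfies $(\mathcal I)$, the inner while-loop of $\Find_k$ terminates for every $k$, so $\Find_\infty$ is well-defined and assigns each positive integer to some vertex; I must check no vertex receives two positive labels. Suppose vertex $v$ is assigned both value $i+1$ and value $j+1$ with $i < j$. When the procedure assigns $i+1$ to $v$, it arrives at $v$ via an edge $e$ incident to $v$, and there is no edge incident to $v$ with label larger than $\ell_e$; in particular the edge used to later reach $v$ from value $j$ must have label $\le \ell_e$. But then, tracing the increasing path from the vertex labelled $j$ down to $v$, one sees this path is forced to coincide with part of the path used for step $i$, contradicting that the vertex labelled $j > i$ was itself reached only after $i$ was already placed. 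I would make this precise by induction on the assigned labels, using that the sequence of edge-labels followed during one ``Find step'' is strictly increasing and that the terminal edge has locally maximal label. The same argument, with the order of edge-labels reversed, applies to $\OFind_\infty$, whose well-definedness comes from $(\mathcal D)$. Finally, a vertex cannot get both a positive and a negative label because the pointed vertex is the only common starting point, and it alone receives value $1$, never $0$.

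\textbf{Step 2: every vertex is reached by at least one of the two procedures.} Here is where the one-end hypothesis enters. Fix a vertex $v$. Consider the semi-infinite spine $\gamma = (\gamma_0 = 1, \gamma_1, \gamma_2, \dots)$ and let $v$ hang off the spine at $\gamma_m$ through a finite bush $B$. Along the spine, look at the edge-labels $\ell_{\{\gamma_0,\gamma_1\}}, \ell_{\{\gamma_1,\gamma_2\}}, \dots$. Because $(\mathcal I)$ holds, this sequence cannot be eventually increasing, and because $(\mathcal D)$ holds it cannot be eventually decreasing; so it has infinitely many ``descents'' and infinitely many ``ascents''. I claim that a vertex in a finite bush attached below a spine-edge that is a local minimum from the spine's perspective will be reached by $\Find_\infty$, and one below a local maximum will be reached by $\OFind_\infty$; more carefully, one shows that $\Find_\infty$ keeps descending the spine at every ``ascent'' step and, upon each ``descent'' of the spine-label profile, it turns into the adjacent finite bush and exhausts it (since the bush is finite, property $(\mathcal I)$ restricted to it is automatic and the Find-steps terminate inside) before returning to the spine. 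The key point is that the infinitely many positive labels produced by $\Find_\infty$ cannot all stay on the spine — in fact whenever the spine-label goes down the procedure is forced off the spine into the bush — nor all escape to infinity, because between two consecutive spine-ascents only finitely many vertices lie. A symmetric statement holds for $\OFind_\infty$ at the spine-descents. Combining, every bush, hence every vertex, is visited by one of the two procedures.

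\textbf{The main obstacle.} The delicate part is Step 2: making rigorous the claim that $\Find_\infty$ genuinely ``progresses along the spine'' and empties each bush it enters, rather than, say, getting stuck forever climbing some subtree or skipping a bush entirely. I expect the clean way to handle it is to prove the following invariant by induction on the step count: after $\Find$ has placed value $N$, the set of vertices already labelled by $\Find$ together with those destined for $\OFind$ is a finite connected subtree containing an initial segment $\br{\gamma}_{m(N)}$ of the spine with $m(N)\to\infty$, and a vertex is ``destined for $\OFind$'' precisely when the unique path from $1$ to it, after its last spine-vertex, begins with an edge whose label exceeds all earlier spine-edge labels seen up to that point. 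Establishing this invariant — essentially a bookkeeping of which local extrema of the spine-label sequence route a given bush to $\Find$ versus $\OFind$ — is the technical heart; once it is in place, Lemma~\ref{lem:UniqueLabel} follows by letting $N \to \infty$ and invoking Step 1 for the ``not both'' half.
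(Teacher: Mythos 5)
Your overall picture (spine plus finite bushes, with $\Find_\infty$ and $\OFind_\infty$ ``sweeping'' bushes as they progress) is the right intuition, but there are two substantive gaps, one of which is the real content of the lemma.

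First, your justification of the ``not both'' half in Step 1 does not work. You write that a vertex cannot receive both a positive and a negative label because ``the pointed vertex is the only common starting point.'' But having a common starting point does not by itself prevent both procedures from eventually visiting the same vertex --- indeed, on a \emph{finite} tree both $\Find$ and $\OFind$ label every vertex, so ``not both'' is simply false there. The reason it holds for an infinite one-ended tree is precisely the mechanism the lemma is meant to capture: at the spine-vertex $u$ nearest to $v$, exactly one of the two procedures enters the finite fringe subtree $S_{u_1}$ containing $v$, and the other enters the infinite fringe subtree $S_{u_2}$ first and then never leaves it (once a procedure enters a fringe subtree it exhausts it if finite and never leaves it if infinite). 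Your argument supplies neither the fringe-subtree observation nor the reason one of the two procedures gets trapped in $S_{u_2}$.

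Second, the invariant you propose in the ``main obstacle'' paragraph to close Step 2 is incorrect, so following that route would not succeed. You suggest $v$ is destined for $\OFind$ iff the first edge out of the spine on the path to $v$ ``exceeds all earlier spine-edge labels.'' The actual criterion is a case analysis on the relative (cyclic) order of three labels at the spine vertex $u$ closest to $v$: $\ell_0$ (edge from $u$ toward the root), $\ell_1$ (edge from $u$ into the bush containing $v$), and $\ell_2$ (edge from $u$ away from the root along the spine). For instance $v$ is labelled by $\OFind_\infty$ when $\ell_1<\ell_0<\ell_2$, a case in which $\ell_1$ does \emph{not} exceed the earlier spine label $\ell_0$. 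The correct statement is that $v$ goes to $\Find_\infty$ precisely when $(\ell_0,\ell_1,\ell_2)$ is in one cyclic order and to $\OFind_\infty$ when it is in the other. Your approach via a global progress-invariant along the spine is both harder and, as sketched, would verify the wrong criterion; the paper resolves the lemma locally and in one stroke by this three-label case analysis combined with the fringe-subtree observation, and I would recommend redoing Step 2 along those lines.
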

\begin{proof}
  (The reader may want to look at \cref{fig:notationProofUniqueLabel}
  to visualize the notation in this proof.)
  Let $v$ be a vertex of $T$.
  Since $T$ has one end, there exists a unique infinite injective path starting from the pointed vertex $\varnothing$. 
  Denote by $u$ the vertex of this path which is the closest to $v$ ($u$ could be the root vertex, or $v$ itself).
  We first assume that $v \ne u$, i.e. $v$ is not on the path from the root to infinity.
  Then $u$ has at least two children, one of them, say $u_1$, being an ancestor of $v$ 
  (possibly $v$ itself) and one other, say $u_2$, lying on the infinite path.
  We set $e_1=\{u,u_1\}$ and  $e_2=\{u,u_2\}$, both being edges of $T$.
  To simplify the discussion, we also assume that $u$ is not the root of the tree,
  and call $e_0$ the edge joining $u$ to its parent.
  The labels of the edges $e_0$, $e_1$ and $e_2$ are denoted 
  by $\ell_0$, $\ell_1$ and $\ell_2$, respectively.
    Whether $v$ is assigned a label by $\Find_\infty$ or by $\OFind_\infty$
  depends on the relative order of $\ell_0$, $\ell_1$ and $\ell_2$,
  as will be explained below.
  
    \begin{figure}[th]
    \[
\begin{tikzpicture}
\coordinate (0) at (0,0);
\coordinate (1) at (-1,1);
\coordinate (2) at (2,1);
\coordinate (11) at (-2,2);
\coordinate (12) at (0,2);
\coordinate (121) at (-1,3);
\coordinate (122) at (0,3);
\coordinate (123) at (1,3);
\coordinate (1211) at (-1,5);
\coordinate (v) at (-2,3.5);

\draw[fill=black]
	(0) circle (1.5pt)
	(1) circle (1.5pt)
	(2) circle (1.5pt)
	(11) circle (1.5pt)
	(12) circle (1.5pt)
	(121) circle (1.5pt)
	(122) circle (1.5pt)
	(123) circle (1.5pt)
	(v) circle (1.5pt);

\draw[line width=0.5mm]
(0) -- (1) node [midway,left] {$\ell_{0}$}
(1) -- (12) node [midway,right] {$\ell_{2}$}
(12) -- (121);

\draw[line width=0.5mm, dashed]
(121) -- (1211);

\draw
(0) -- (2)
(1) -- (11) node [midway,left] {$\ell_{1}$}
(12) -- (122)
(12) -- (123);
\draw
	(0) node[left,xshift=-1ex,yshift=-1ex] {$\varnothing$}
	(0) node[right,xshift=1ex,yshift=-1ex] {\framebox{$1$}}
	(1) node[left] {\framebox{$u$}}
	(11) node[left, yshift=-1ex] {\framebox{$u_{1}$}}
	(12) node[right] {\framebox{$u_{2}$}}
	(v) node[below] {\framebox{$v$}};
	
 \draw (-2,3) ellipse (0.5cm and 1cm);
  \draw (0,3.6) ellipse (0.3cm and 0.6cm);
 \draw (1,3.6) ellipse (0.3cm and 0.6cm);
 \draw (2,1.6) ellipse (0.4cm and 0.6cm);
 
 \draw (0) circle (3.5pt);
\draw (0) circle (5.5pt);
\end{tikzpicture}
\]
    \caption{Notation of the proof of \cref{lem:UniqueLabel} 
    (in bold, the infinite path).}
    \label{fig:notationProofUniqueLabel}
\end{figure}
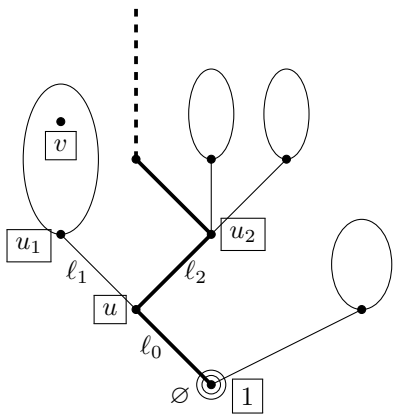

  Before going into a case distinction, let us make some remarks on
  the procedures $\Find_\infty$ and $\OFind_\infty$, using the notion a \emph{fringe subtrees}  (a  fringe subtree $S$ of $T$ is a subtree of $T$
  formed by one of its vertex and {\em all}\, its descendants).
{We claim that:}  \begin{itemize}
    \item when the algorithm $\Find_\infty$ (or $\OFind_\infty$)
      enters a finite fringe subtree  $S$
      (i.e. $\mathsf{CurrentVertex}$ is in $S$ at some stage of \cref{alg:find}),
      it does not leave it before having assigned a label to every vertex in $S$.
    \item when the algorithm $\Find_\infty$ (or $\OFind_\infty$)                 
      enters an infinite fringe subtree  $S$,
      it never leaves $S$.
  \end{itemize}
  {The first claim can be checked by induction, and the second one follows from the first one.} Denote by respectively $S_{u_1}$ and $S_{u_2}$ the fringe subtrees rooted in $u_1$ and $u_2$.
  By construction, $S_{u_1}$ is finite and contains $v$, while $S_{u_2}$ is infinite.
  Determining whether $v$ is assigned a label by $\Find_\infty$ (or $\OFind_\infty$)
  therefore boils down to determining
  whether $\Find_\infty$ (or $\OFind_\infty$)
  enters $S_{u_1}$ or $S_{u_2}$ first.

  From this reformulation it is now easy to see that:
  \begin{itemize}
    \item if $\ell_0 <\ell_1<\ell_2$ or $\ell_1<\ell_2<\ell_0$ or $\ell_2< \ell_0 < \ell_1$,
      then the vertex $v$ is assigned a label by $\Find_\infty$ but not by $\OFind_\infty$;
    \item if $\ell_1 <\ell_0<\ell_2$ or $\ell_0<\ell_2<\ell_1$ or $\ell_2< \ell_1 < \ell_0$,
      then the vertex $v$ is assigned a label by $\OFind_\infty$ but not by $\Find_\infty$.
  \end{itemize}
  This proves the lemma in the case $v \ne u$ and $u \ne \varnothing$.
  If $v \ne u = \varnothing$, the above conclusion holds with the convention that $\ell_0=\infty$.
  If $v = u \ne \varnothing$, the same holds with the convention that $\ell_1=\infty$.
  The only remaining case is that of $v=\varnothing$, but it is clear
  that the root is assigned a label (namely the label 1)
  by $\Find_\infty$ and none by $\OFind_\infty$.
\end{proof}

We now prove the following continuity lemma for the $\Find$ procedure,
which is crucial to obtain our limit theorem for $\wTFFn$.
\begin{lemma}\label{lem:Find_Continuous}
Consider  a locally finite \E-labelled  tree  $\bm{T}$ with one end  (either plane, or non-plane pointed), 
such that both $( \mathcal{I})$ and $( \mathcal{D})$ are satisfied.
Let $(\bm{T}_{n})_{n \geq 1}$ be a sequence of \E-labelled trees, each with $n$ vertices,
such that $\bm{T}_{n}$ converges to $\bm{T}$ for the local topology on \E-labelled trees.
Then the convergence
\[\Find_{\lfloor \frac{n}{2} \rfloor} \circ 
  \OFind_{\lfloor\frac{n-1}{2} \rfloor}
(\bm{T}_{n}) 
\quad \mathop{\longrightarrow}_{n \rightarrow \infty} \quad  
\Find_{\infty} \circ \OFind_\infty (\bm{T})
\]
holds for the local topology on \EV-labelled trees.
\end{lemma}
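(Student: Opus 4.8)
\medskip
\noindent\textbf{Proof strategy.}
The plan is to establish a \emph{locality} property of the relabelling procedures and then read off the convergence from it. Concretely, I would show that for every $h \geq 1$ there is a finite radius $R=R(h,\bm T)$ such that the vertex labels carried by the vertices of $\br{\bm T}_h$ in $\Find_\infty \circ \OFind_\infty(\bm T)$ depend only on the pointed tree $\br{\bm T}_R$ together with the \emph{relative order} of its (finitely many) edge labels. Since $\bm T_n \to \bm T$ means exactly that $\br{\bm T_n}_R$ eventually coincides with $\br{\bm T}_R$ as a pointed tree, with edge labels converging --- hence, the labels being distinct, with the same relative order for $n$ large --- and since the procedures never alter edge labels, such a locality statement immediately yields the convergence of \EV-labelled trees: the edge labels of $\br{\bm T_n}_h$ converge to those of $\br{\bm T}_h$ by hypothesis, and the vertex labels eventually agree.

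To prove the locality statement, I would fix $h$ and first choose $M=M(h,\bm T)$ so large that every vertex of $\br{\bm T}_h$ receives, in $\Find_\infty \circ \OFind_\infty(\bm T)$, a label of absolute value $<M$; this is legitimate by \cref{lem:UniqueLabel} (every vertex is labelled by exactly one of the two procedures) together with the finiteness of $\br{\bm T}_h$. Properties $(\mathcal D)$ and $(\mathcal I)$ guarantee that each individual label assignment of $\OFind_\infty$, respectively $\Find_\infty$, follows a strictly monotone (hence simple) path in $\bm T$ and therefore visits only finitely many vertices. Consequently, the first $M$ assignments of $\OFind_\infty$ on $\bm T$, followed by the first $M$ assignments of $\Find_\infty$ on $\OFind_\infty(\bm T)$, explore a \emph{finite} set $W$ of vertices, and by the choice of $M$ one has $\br{\bm T}_h \subseteq W$. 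Pick $R$ large enough that $W \subseteq \br{\bm T}_{R-1}$ (so that, in particular, every edge of $\bm T$ incident to a vertex of $W$ lies in $\br{\bm T}_R$). By construction, and by \cref{rem:compatible} (the output of $\Find$ and $\OFind$ depends only on the relative order of the edge labels), the vertex labels on $\br{\bm T}_h$ produced by $\Find_\infty \circ \OFind_\infty(\bm T)$ are then a function of $\br{\bm T}_R$ and the order of its edge labels alone.

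Next I would transfer this to $\bm T_n$. Fix $n$ large enough that $\br{\bm T_n}_R$ coincides with $\br{\bm T}_R$ as a pointed tree carrying edge labels in the same relative order, and that $\lfloor n/2 \rfloor, \lfloor (n-1)/2 \rfloor \geq M$. By induction on the assignment index, and within each assignment by induction on the steps of the inner loop of \cref{alg:find}, the first $M$ assignments of $\OFind_{\lfloor (n-1)/2 \rfloor}$ on $\bm T_n$, and then the first $M$ assignments of $\Find_{\lfloor n/2 \rfloor}$, mirror exactly the corresponding assignments of $\OFind_\infty$ and $\Find_\infty$ on $\bm T$: both start at the pointed vertex, and at each step the current vertex lies in $W \subseteq \br{\bm T}_{R-1}$, so its set of incident edges and their relative order agree with those in $\bm T$ and the same move is made. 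Hence every vertex of $\br{\bm T_n}_h$ is assigned, during these first $M$ assignments, exactly the label it carries in $\Find_\infty \circ \OFind_\infty(\bm T)$. It then remains to rule out that some later assignment (of $\OFind_{\lfloor (n-1)/2 \rfloor}$, or of $\Find_{\lfloor n/2 \rfloor}$ running afterwards) overwrites a vertex of $\br{\bm T_n}_h$ on the finite tree $\bm T_n$: here I would use \cref{prop:Mos} and \cref{rem:compatible} to relabel the edges of $\bm T_n$ by $1,\dots,n-1$ in the same relative order (which does not change the vertex labels produced), identifying $\bm T_n$ with $\EEE(F_n)$ for a genuine minimal factorization $F_n$; by \cref{cor:Find}, $\Find_{\lfloor n/2 \rfloor}\circ\OFind_{\lfloor (n-1)/2\rfloor}(\bm T_n)$ then equals $\Tree(\widetilde{F}_n)$, a genuine \EV-labelled tree in which every vertex carries \emph{exactly one} label. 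Therefore the labels found above are final, and the vertex labels of $\br{\bm T_n}_h$ and of $\br{\bm T}_h$ coincide for $n$ large, which is the asserted convergence of \EV-labelled trees. The case of plane trees would be deduced from the non-plane case using the commutation relation $\Find_k(\mathsf{Shape}(\tau),\bm\ell)=\mathsf{Shape}(\Find_k(\tau,\bm\ell))$ and its $\OFind$-analogue.

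I expect the main difficulty to lie not in any isolated computation but in making the locality argument fully rigorous while controlling three things simultaneously: that the infinite procedures are well defined and \emph{finitary} on $\bm T$ (which is where $(\mathcal I)$ and $(\mathcal D)$, together with \cref{lem:UniqueLabel}, are used), that the \emph{finite} procedures on $\bm T_n$ do not ``wrap around'' the finite tree and subsequently disturb $\br{\bm T}_h$ (which is precisely where the genuine-tree identity of \cref{cor:Find} is essential), and the bookkeeping of the composition order --- $\OFind$ first, then $\Find$ --- at every stage.
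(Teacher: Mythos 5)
Your proposal is correct and takes essentially the same approach as the paper's: both use $(\mathcal I)$ and $(\mathcal D)$ to make the labelling procedures finitary, invoke \cref{lem:UniqueLabel} together with the one-end hypothesis to reduce the ball $\br{\bm T}_h$ to a finite number $M$ (the paper's $K_h$) of assignments, and then use local convergence plus \cref{rem:compatible} to mirror those assignments on $\bm T_n$. The only organizational difference is that you make explicit the ``no overwriting on $\bm T_n$'' step via \cref{prop:Mos} and \cref{cor:Find}, a point the paper treats more tersely when it passes from $\Find_{K_h}\circ\OFind_{K_h}$ to $\Find_{\lfloor n/2\rfloor}\circ\OFind_{\lfloor (n-1)/2\rfloor}$.
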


\begin{proof} 
For an \EV-labelled tree $\tau$, recall that we denote by $\llbracket \tau \rrbracket'_{h}$ the \V-labelled tree obtained from $\llbracket \tau \rrbracket_{h}$ by forgetting the edge labels.
  Fix $k \geq 1$. 
  We first prove that, for all $h \ge 1$,
\begin{equation}
  \label{eq:FindkContinuous}
\llbracket \Find_{k}(\bm{T}_{n}) \rrbracket_{h} 
\quad \mathop{\longrightarrow}_{n \rightarrow \infty} \quad   \llbracket \Find_{k}(\bm{T})\rrbracket_{h}.
\end{equation}
  It is enough to establish the result for every $h$ sufficiently large. Since $\bm{T}$ satisfies $( \mathcal{I}) $, by \cref{lem:UniqueLabel} and its proof,  we may choose $h \geq 1$ such that $\Find_{k}$ does not visit a vertex with height greater than $h$ in $\bm{T}$. Therefore $\Find_{k}$ only depends on $ \llbracket \bm{T} \rrbracket_{h+1}$. By assumption, we can take $n$ sufficiently large so that $\br{\bm{T}}'_{h+1}=\br{\bm{T_{n}}}'_{h+1}$ and such that the edge labels of $\br{\bm{T}}_{h+1}$ and of $\br{\bm{T_{n}}}_{h+1}$ are compatible (in the sense of \cref{rem:compatible}). For such $n$,  the execution of  $\Find_{k}(\bm{T_n})$ is identical to that of $\Find_{k}(\bm{T})$. As a consequence, $ \llbracket \Find_{k}(\bm{T}_{n}) \rrbracket'_{h}=\llbracket \Find_{k}(\bm{T})\rrbracket'_{h}$ for $n$ sufficiently large, and 
  \eqref{eq:FindkContinuous} follows since the edge-labels converge by assumption.
  The same holds replacing $\Find_k$ by $\OFind_k$, and thus also by
  the composition $\Find_k \circ \OFind_k$ by using successively both statements.

  We now use the fact that $\bm{T}$ has one end.
  By \cref{lem:UniqueLabel}, every vertex of $\bm{T}$ is assigned a label by $\Find_\infty \circ \OFind_\infty$.
  In particular, for every fixed $h \ge 1$, there is an integer $K_h$
  such that all vertices at height at most $h$ are assigned a label with absolute value smaller than $K_h$.
  Then it is clear that
  \[ \llbracket \Find_{\infty} \circ \OFind_\infty (\bm{T})\rrbracket_{h}
  = \llbracket \Find_{K_h} \circ \OFind_{K_h} (\bm{T})\rrbracket_{h}.\]
  From the first part of the proof, there exists an integer $n_{h}$ such that for every $n \geq n_{h}$ we have
  \begin{equation}
    \llbracket \Find_{K_h} \circ \OFind_{K_h} (\bm{T_n}) \rrbracket'_{h} 
  = \llbracket \Find_{\infty} \circ \OFind_{\infty} (\bm{T})\rrbracket'_{h}.
  \label{eq:Tec1}
\end{equation}
This implies that every vertex of height at most $h$ in $\bm{T_{n}}$ is assigned a label by $\Find_{k} \circ \OFind_{k} $ for every $k \geq K_{h}$ and $n \geq n_{h}$. Therefore, for $n> \max(2 K_{h},n_{h})$,
  we have
\[
    \llbracket \Find_{\lfloor \frac{n}{2} \rfloor} \circ 
  \OFind_{\lfloor \frac{n-1}{2} \rfloor}
(\bm{T_n}) \rrbracket'_{h} 
  = \llbracket \Find_{\infty} \circ \OFind_{\infty} (\bm{T})\rrbracket'_{h}.\]
  This completes the proof since this holds for any $h \ge 1$ and 
  since the edge-labels converge by assumption.
  \end{proof}

\subsection{Local convergence of the random minimal factorization tree}
\label{ssec:proof}

We now have all the tools to prove the local convergence as $n \rightarrow \infty$ of
 the \EV-labelled tree $\wTFFn$, which will in turn allow us to establish \cref{thm:cvtraj}.
 
\begin{theorem}\label{thm:localTrees}
The convergence
$$  \tfrac{1}{n} \cdot \wTFFn   \quad \mathop{\longrightarrow}^{(d)}_{n \rightarrow \infty} 
\quad   \mathsf{Shape}\big(\Find_{\infty} \circ 
\OFind_{\infty} (\mathscr{T}^{u}_{\infty}) \big) $$
holds in distribution in the set of all locally finite, non-plane, pointed, \EV-labelled trees equipped with $\dloc$. 
\end{theorem}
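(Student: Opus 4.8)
The plan is to combine the vertex-label-free local convergence of \cref{Prop:Local_Without_Vertex_Labels} with the continuity statement \cref{lem:Find_Continuous}, transferring the almost sure conclusion of the latter into a convergence in distribution by means of a Skorokhod coupling. Three ingredients need to be assembled beforehand. (a) A purely combinatorial identity: by \cref{cor:Find} together with \cref{rem:compatible}, for every minimal factorization $F_n$ of $(1,\dots,n)$,
\[
\tfrac1n\cdot\wTFn \;=\; \Find_{\lfloor n/2\rfloor}\circ\OFind_{\lfloor (n-1)/2\rfloor}\!\left(\tfrac1n\cdot\EEE(F_n)\right),
\]
because $\Find$ and $\OFind$ leave the edge labels untouched and assign vertex labels using only the relative order of the edge labels, which is unchanged after multiplying all edge labels of $\EEE(F_n)$ by $1/n$. (b) The commutation relation recorded just after Algorithm~\ref{alg:find}, in its infinite version: $\mathsf{Shape}\bigl(\Find_{\infty}\circ\OFind_{\infty}(\mathscr{T}^{u}_{\infty})\bigr)=\Find_{\infty}\circ\OFind_{\infty}\bigl(\mathsf{Shape}(\mathscr{T}^{u}_{\infty})\bigr)$, valid whenever the right-hand side makes sense. (c) The fact that $\EEE(\mathscr{F}_n)$, hence $\bm T_n\coloneqq\tfrac1n\cdot\EEE(\mathscr{F}_n)$, always has exactly $n$ vertices (\cref{prop:Mos}).

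The heart of the argument — and the step I expect to be the main obstacle — is to check that the limiting tree $\mathsf{Shape}(\mathscr{T}^{u}_{\infty})$ almost surely lies in the domain of validity of \cref{lem:Find_Continuous}, i.e.\ that it is locally finite, has one end, and satisfies both $(\mathcal I)$ and $(\mathcal D)$. Local finiteness is immediate, since $\mathscr{T}_{\infty}$ has countably many vertices, each with a $\mathsf{Poisson}(1)$ or $1+\mathsf{Poisson}(1)$ number of children. For the one-end property I would fix $r\ge0$: the components of $\mathscr{T}_{\infty}\setminus\br{\mathscr{T}_{\infty}}_r$ are the fringe subtrees rooted at the (finitely many) vertices of generation $r+1$, each of which is normal except for the unique spine vertex, and the fringe subtree of a normal vertex is an unconditioned critical $\BGW$ tree, hence finite almost surely — so almost surely the only infinite component is the one carrying the spine. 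Finally, for $(\mathcal I)$ and $(\mathcal D)$: one checks that an infinite injective path in a one-ended locally finite tree eventually descends step by step and therefore must eventually follow the unique ray to the end, which is a tail of the spine of $\mathscr{T}_{\infty}$; since the edge labels along the spine of $\mathscr{T}^{u}_{\infty}$ are i.i.d.\ uniform on $[0,1]$, the event that some tail of this sequence is monotone is a countable union (over the first index of the tail) of null events (the event that $m$ consecutive of them are increasing has probability $1/m!$), so $(\mathcal I)$ and $(\mathcal D)$ hold almost surely. In particular $\Find_\infty$, $\OFind_\infty$ and the right-hand side of the theorem are almost surely well defined, and by \cref{lem:UniqueLabel} every vertex receives a label.

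With this in hand the conclusion is routine. Since the convergence $\tfrac1n\cdot\EEE(\mathscr{F}_n)\to\mathsf{Shape}(\mathscr{T}^{u}_{\infty})$ in \cref{Prop:Local_Without_Vertex_Labels} takes place in a Polish space, Skorokhod's representation theorem provides, on one probability space, random \E-labelled trees $\bm T_n$ and $\bm T$ with $\bm T_n$ distributed as $\tfrac1n\cdot\EEE(\mathscr{F}_n)$, $\bm T$ distributed as $\mathsf{Shape}(\mathscr{T}^{u}_{\infty})$, and $\bm T_n\to\bm T$ almost surely for $\dloc$. Almost surely, $\bm T_n$ has exactly $n$ vertices and $\bm T$ satisfies the hypotheses of \cref{lem:Find_Continuous}; applying that lemma realization by realization gives
\[
\Find_{\lfloor n/2\rfloor}\circ\OFind_{\lfloor (n-1)/2\rfloor}(\bm T_n)\;\longrightarrow\;\Find_\infty\circ\OFind_\infty(\bm T)
\]
almost surely, hence in distribution, for the local topology on \EV-labelled trees. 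By ingredient (a) the left-hand side has the law of $\tfrac1n\cdot\wTFFn$, and by ingredient (b) the right-hand side has the law of $\mathsf{Shape}\bigl(\Find_\infty\circ\OFind_\infty(\mathscr{T}^{u}_{\infty})\bigr)$; this is precisely the asserted convergence.
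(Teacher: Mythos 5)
Your proof is correct and follows essentially the same strategy as the paper's: Skorokhod coupling of the convergence in \cref{Prop:Local_Without_Vertex_Labels}, pointwise application of \cref{lem:Find_Continuous}, identification of the laws via \cref{cor:Find} (together with \cref{rem:compatible}), and the $\Find$–$\mathsf{Shape}$ commutation. The one place you add something is the explicit verification that $\mathsf{Shape}(\mathscr{T}^{u}_{\infty})$ almost surely has one end and satisfies $(\mathcal I)$ and $(\mathcal D)$ (via finiteness of normal fringe subtrees and the fact that i.i.d.\ uniform spine labels a.s.\ admit no monotone tail); the paper simply asserts these facts without proof, so this is a welcome clarification rather than a different route.
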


\begin{proof}[Proof of Theorem \ref{thm:localTrees}]
In virtue of Skorokhod's representation theorem (see e.g.\ \cite[Theorem 6.7]{Bil99}), we may assume that the convergence of \cref{Prop:Local_Without_Vertex_Labels} holds almost surely, so that almost surely, for every $h \geq 1$,
\[\bigllbracket \tfrac{1}{n}  \cdot \EEE(\mathscr F_n)  \bigrrbracket_{h}  \quad 
 \mathop{\longrightarrow}_{n \rightarrow \infty} \quad 
 \llbracket  \mathsf{Shape} (\mathscr{T}^{u}_{\infty}) \rrbracket_{h}.\]
 Since $\mathscr{T}^{u}_{\infty}$ has a.s. one end
 and satisfies a.s. conditions $(\mathcal I)$ and $(\mathcal D)$,
 we can apply \cref{lem:Find_Continuous}.
 We get that, almost surely, for every $h \geq 1$,
 \[ \bigllbracket \Find_{\lfloor \frac{n}{2} \rfloor} \circ 
  \OFind_{\lfloor \frac{n-1}{2} \rfloor} \big( \tfrac{1}{n} \cdot\EEE(\mathscr F_n)\big) \bigrrbracket_{h}  \quad                     
  \mathop{\longrightarrow}_{n \rightarrow \infty} \quad 
  \bigllbracket  \Find_{\infty} \circ \OFind_\infty\big( \mathsf{Shape} 
  (\mathscr{T}^{u}_{\infty}) \big) \bigrrbracket_{h}.\]
  From \cref{cor:Find}, the left-hand side has the same distribution 
  as $\llbracket\tfrac{1}{n} \wTFFn \rrbracket_{h}$.
  Using the commutation between $\Find$ and $\mathsf{Shape}$ mentioned in
  \cref{sec:relabelling}, this completes the proof of the theorem.
\end{proof}

We are finally in position to establish \cref{thm:cvtraj}.
Let us first define the limiting trajectories $(X_{i})_{i \in \Z}$. 
We consider the limiting tree 
$\mathsf{Shape}\big(\Find_{\infty} \circ  \OFind_{\infty} (\mathscr{T}^{u}_{\infty}) \big) $
in \cref{thm:localTrees}.
For a fixed $i \in \Z_{> 0}$,
we denote by $V^{(i)}_{0}=i, \ldots, V^{(i)}_{K_{i}}= i+1$ the labels
of  the successive vertices visited by $\Find_\infty$ when assigning the label $i+1$;
the number of such vertices $K_i+1$ is random,
note also that some of these labels might be bigger than $i$ or negative,
so that they are not assigned when we run $\Find_{i+1}$ on $\mathscr{T}^{u}_{\infty}$,
but are assigned later in the procedure $\Find_{\infty} \circ  \OFind_{\infty}$.
Finally,
for $k \in \{1,\dots,K_i\}$, we denote $\ell^{(i)}_k$ the label of the edge between
$V^{(i)}_{k-1}$ and $V^{(i)}_{k}$.

Then, setting $\ell^{(i)}_{0}=0$ and $\ell^{(i)}_{K_{i}+1}= {2}$, we define, for $t \in [0,1]$,
\begin{equation}
  \label{eq:DefLimitingTrajectories}
  X_{i}(t)=\sum_{k=0}^{K_{i}} V^{(i)}_{k} \mathbbm{1}_{\ell^{(i)}_{k} \leq t < \ell^{(i)}_{k+1}}
\end{equation}
{(we take the convention $\ell^{(i)}_{K_{i}+1}= 2$ in order to have $X_{i}(1)=i+1$)}.
The construction is similar for $i \leq 0$, except that  we  consider the step where $\OFind_{\infty}$ starts from the vertex labelled $i+1$ and assigns label $i$, and we denote   by $V^{(i)}_{K_{i}}= i+1, \ldots, V^{(i)}_{0}=i$ 
the successive visited vertices (note that the order of indices is reversed), with the definition of  $\ell^{(i)}_k$ being unchanged.

\begin{proof}[Proof of \cref{thm:cvtraj}] 
We first introduce some notation. Fix $i >0$. 
For $n \geq |i|$, run the procedure $\Find$ on $ \frac{1}{n} \cdot \wTFFn$ starting from $i$ until it assigns label $i+1$. 
Denote by \hbox{$V^{(n,i)}_{0}=i$}, \ldots, $V^{(n,i)}_{K_{n,i}}= i+1$ the labels of  the successively visited vertices 
(the number $K_{n,i}+1$ of such vertices is a random variable depending on $n$)
and by $\ell^{(n,i)}_{k}$ the label of the edge between $V^{(n,i)}_{k-1}$ and $V^{(n,i)}_{k}$
(for $k \in \{1,\dots,K_{n,i}\}$; since we consider $ \frac{1}{n} \cdot \wTFFn$, these labels are in $[0,1]$).
Finally set $\ell^{(n,i)}_{0}=0$ and $\ell^{(n,i)}_{K_{n,i}+1}= {2} $. 
Recall from \eqref{eq:traj} the definition of trajectories  $X^{(n)}_{i}$ of $i$ in $\widetilde{\mathscr{F}}^{(n)}$.
From the proof of \cref{lem:Find}, we have 
\[\forall\ 0 \leq t \leq 1, \qquad X^{(n)}_{i}\big(\lfloor n\, t \rfloor\big)
=\sum_{k=0}^{K_{n,i}} V^{(n,i)}_{k} \mathbbm{1}_{\ell^{(n,i)}_{k} \leq t < \ell^{(n,i)}_{k+1}}.\]
As above, we use a similar construction for $i<0$ and the above relation holds as well in this case.

Fix $A \geq 1$ and observe that the set of all indices $\III_{A}$  of all the trajectories $(X_{i})_{i \in \Z}$ that enter  the rectangle $[0,1] \times [-A,A]$  satisfies the identity
\[ \III_{A}= \{i \in \Z: \exists  \ 0 \leq t \leq 1: |X_{i}(t)| \leq A\}.\]
We note that an element $i$ can be in $\III_A$ if and only if there exists an increasing 
path in $\mathscr{T}^{u}_{\infty}$ from the vertex labelled $i$ to some vertex label $i'$ with $|i'| \le A$.
Since $\mathscr{T}^{u}_{\infty}$ is locally finite and contains a single path from the root to infinity,
which a.s. contains infinitely many ascents and descents, for a given $i'$,
the set of such $i$ is a.s. finite.
We conclude that $ \#  \III_{A} < \infty$ almost surely.

Now, by Skorokhod's representation theorem we may assume that the convergence of \cref{thm:localTrees} holds almost surely.  Since $\#  \III_{A} < \infty$ almost surely, we may fix {(a random)} $H>1$such that 
\begin{enumerate}
  \item for every $i \in  \III_{A} $, all the vertices visited by the algorithm $\Find_{|i|}$ and $\OFind_{|i|}$ have height at most $H-1$;
  \item for every $i'$ with $|i'| \le A$, there is no decreasing path from $i'$ {which reaches} height $H$ or more.
\end{enumerate}
We can find an integer $N>1$ such that  we have the identity
$\llbracket \frac{1}{n} \cdot \wTFFn \rrbracket_{H}'= \llbracket \mathsf{Shape}\big(\Find_{\infty} \circ 
\OFind_{\infty} (\mathscr{T}^{u}_{\infty}) \big) \rrbracket'_{H}$
for every $n \geq N$;
moreover, by possibly increasing $N$, we may assume that these \V-labelled balls have compatible edge labellings.
Condition (i) above implies that, for $n \ge N$,
the procedures $\Find_{|i|}$ and $\OFind_{|i|}$ behave similarly on $\frac{1}{n} \cdot \wTFFn$
and $\mathscr{T}^{u}_{\infty}$.
Condition (ii) forces $\mathcal{I}_{A}^{(n)}$ to be constituted of labels $i$ of vertices such that the trajectories of $i$
``stay'' at height at most $H$ in the tree, so that $\mathcal{I}_{A}^{(n)}= \mathcal{I}_{A}$ for every $n \geq N$.
As a consequence, for every $n \geq N$ and $i \in  \mathcal{I}_{A}$,   $K_{n,i}=K_{i}$ and $V^{(n,i)}_{k}=V^{(i)}_{k}$ for every $0 \leq k \leq K_{n}$.
Also, for every $i \in \mathcal{I}_{A}$ and $0 \leq k \leq K_{n,i}$, $\ell^{(n,i)}_{k}\rightarrow \ell^{(n)}_{k}$ as $n \rightarrow \infty$.
The desired result follows.
\end{proof}

\section{Combinatorial consequences}

The goal of this section is to prove \cref{corol:main},
using the local convergence of $\wTFFn$.
We start in \cref{sec:Consequences_GeneralStatements}
by item (i), {\em i.e.} some results 
on the existence of limiting distributions for ``local'' statistics.
In \cref{sec:Consequences_ExplicitComputation},
we prove items (ii), (iii) and (iv)
to illustrate how the explicit construction of the limit of $\wTFFn$,
allows to compute limiting laws of such statistics.
As we shall see, explicit computations quickly become quite cumbersome.

\subsection{Existence of distribution limits for local statistics}
\label{sec:Consequences_GeneralStatements}
We first need to introduce some notation. As in the Introduction, for a factorization $F=(\tau_1,\dots,\tau_{n-1}) \in \mathfrak{M}_{n}$ and an integer $1 \leq i \leq n $, let $ \widetilde{{T}}^{F}_{i}= \{ 1 \leq k \leq n-1 : i \in \widetilde{\tau}_{k}\}$
be the set of indices of all transpositions moving $i \in \Z$ 
(transpositions are as before identified with two-element sets), and let $\widetilde{{M}}^{F}_{i}= \{1 \leq k \leq n-1 : \widetilde{\tau}_{1} \cdots \widetilde{\tau}_{k-1}(i) \neq	 \widetilde{\tau}_{1} \cdots \widetilde{\tau}_{k}(i)  \} $ be the set of all indices of  transpositions that affect the trajectory of $i \in \Z$. 

These sets are easily read on the associated tree $\Tree(F)$: in particular,
\begin{itemize}
  \item the number $\#  \widetilde{\mathbb{T}}^{(n)}_{i} $ of transpositions moving $i$ in $F$,
    is the degree of the node with label $i$ in $\Tree(F)$;
\item the number $\# \widetilde{\mathbb{M}}^{(n)}_{i}$ of transpositions that affect the trajectory of $i$
  in $F$ is the distance between the vertices with labels $i$ and $i+1$ in $\wTFFn$.
\end{itemize}

As before, taking a factorization $\mathscr F_n$ uniformly at random 
among all minimal factorizations of size $n$,
we use the following notation for the corresponding random sets:
\[ \widetilde{\mathbb{T}}^{(n)}_{i} \coloneqq \widetilde{T}^{\mathscr F_n}_i,\quad 
\widetilde{\mathbb{M}}^{(n)}_{i} \coloneqq \widetilde{M}^{\mathscr F_n}_i.\]

The local convergence of $\EV$-labelled trees implies the (joint) convergence of the degree of the vertex $i$
and of the distance between the vertices $i$ and $i+1$ (for every fixed $i$ in $\Z$).
Therefore the convergence in distribution in \cref{corol:main} (i) is an immediate consequence of \cref{thm:localTrees}.
The statement of the marginals of the limiting distribution is proved below:
in \cref{cor:marginals_T} for  $\# \widetilde{\mathbb{T}}^{(n)}_{i}$ and 
as a consequence of  symmetry considerations in \cref{sec:sym} for $\# \widetilde{\mathbb{M}}^{(n)}_{i}$.

More generally, many other statistics converge jointly in distribution; here is another example.
\begin{corollary}
\label{corol:JointCvManyStat}
Let $\widetilde{I}^{(n)}_{1}, \ldots,\widetilde{I}^{(n)}_{M_{n}}$ be integers such that
  the transpositions of $\widetilde{\mathscr F}_n$ moving $1$ are, in this order
  $ (1,\widetilde{I}^{(n)}_{1}), \dots, (1, \widetilde{I}^{(n)}_{M_{n}})$. 
Then  $ { \frac{1}{n}} (  \widetilde{I}^{(n)}_{1}, \widetilde{I}^{(n)}_{2}, \ldots,  \widetilde{I}^{(n)}_{M_n} )$
converges in distribution.
\end{corollary}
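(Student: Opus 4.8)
The plan is to deduce \cref{corol:JointCvManyStat} from the local convergence of the edge-and-vertex labelled tree established in \cref{thm:localTrees}, in exactly the same spirit as the proof of \cref{thm:cvtraj}. First I would recall that the transpositions of $\widetilde{\mathscr F}_n$ moving $1$ correspond bijectively to the edges incident to the vertex labelled $1$ in $\wTFFn$: if such an edge carries edge-label $k$ and its other endpoint carries vertex-label $a$, then the $k$-th transposition is $(1,a)=(1,\widetilde{I}^{(n)}_{j})$ where $j$ is the rank of $k$ among the edge-labels at the root. Consequently the vector $\frac1n(\widetilde I^{(n)}_1,\dots,\widetilde I^{(n)}_{M_n})$ is a continuous functional (in the appropriate disjoint-union sense, since $M_n$ is random) of the ball $\bigllbracket \tfrac1n\cdot\wTFFn\bigrrbracket_1$ together with its vertex labels: one reads off the edge-labels at the root, sorts them, and records the vertex-labels of the corresponding neighbours in that order.

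Next I would invoke \cref{thm:localTrees}: $\tfrac1n\cdot\wTFFn$ converges in distribution to $\mathsf{Shape}\big(\Find_\infty\circ\OFind_\infty(\mathscr T^u_\infty)\big)$ for $\dloc$. Since the root of $\mathscr T^u_\infty$ has a finite (a.s.) number of children, and all incident edge-labels are a.s. distinct (they are i.i.d.\ uniform on $[0,1]$), the limiting object has a well-defined analogous vector $\frac1n\cdot$-type limit, namely the vector of vertex-labels of the root's neighbours listed in increasing order of incident edge-label; call it $(I_1,\dots,I_M)$. The number $M$ of children of the root in $\mathscr T^u_\infty$ is $\overline{\mu}$-distributed minus the possible spine adjustment, but in any case a.s. finite, and each $I_j$ is a.s. finite by the same argument used in the proof of \cref{thm:cvtraj} (local finiteness of $\mathscr T^u_\infty$ plus the single end together with infinitely many ascents/descents guarantee that $\Find_\infty\circ\OFind_\infty$ eventually assigns a finite label to every vertex). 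Then, by the Skorokhod representation theorem, we may assume the convergence of \cref{thm:localTrees} holds almost surely; for $n$ large enough the $\V$-labelled balls $\bigllbracket\tfrac1n\cdot\wTFFn\bigrrbracket'_1$ and $\bigllbracket\mathsf{Shape}(\Find_\infty\circ\OFind_\infty(\mathscr T^u_\infty))\bigrrbracket'_1$ coincide with compatible edge-labellings, so $M_n=M$ and the $j$-th neighbour (in edge-label order) carries the same vertex-label $I_j$; finally the edge-labels converge, but here we only need the vertex-labels and the count, so in fact $\frac1n(\widetilde I^{(n)}_1,\dots,\widetilde I^{(n)}_{M_n})\to(I_1,\dots,I_M)$ almost surely, hence in distribution.

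The only mild subtlety — and the step I would be most careful about — is the bookkeeping around the disjoint-union topology: the random length $M_n$ of the vector must stabilise. This is handled exactly as the stabilisation of $\mathcal I_A^{(n)}$ in the proof of \cref{thm:cvtraj}: local convergence of the $\V$-labelled ball of radius $1$ forces $k_\varnothing(\tfrac1n\cdot\wTFFn)=k_\varnothing(\mathsf{Shape}(\Find_\infty\circ\OFind_\infty(\mathscr T^u_\infty)))$ for $n$ large, i.e.\ $M_n=M$ eventually, and then the finitely many coordinates converge. One extra point worth spelling out is that the \emph{ordering} of the $\widetilde I^{(n)}_j$ by time of the transposition is precisely the ordering by incident edge-label, which is a continuous operation on a finite set of distinct reals; since the edge-labels converge and are distinct in the limit, the induced permutation stabilises as well. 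No genuinely new ideas beyond \cref{thm:localTrees} are needed; the statement is essentially a corollary obtained by reading off a radius-$1$ neighbourhood, so the write-up is short.
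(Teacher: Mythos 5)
Your overall strategy is the right one, and for the most part it establishes the result: invoke \cref{thm:localTrees}, pass to a Skorokhod coupling, and observe that $M_n$ (the degree of the vertex $1$), together with the vertex labels $\widetilde I^{(n)}_1,\dots,\widetilde I^{(n)}_{M_n}$ of its neighbours (listed by increasing incident edge-label, which — as you correctly point out — is the same as the temporal order of the transpositions moving $1$), are all read off the radius-$1$ labelled ball and therefore stabilize almost surely. The paper gives no separate argument for this corollary, which is stated as an immediate consequence of \cref{thm:localTrees}, so there is no written proof to compare against; your strategy is exactly the intended one.

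There is, however, an internal inconsistency in your final line. You correctly argue that for $n$ large (along the Skorokhod coupling) one has $M_n=M$ and $\widetilde I^{(n)}_j=I_j$, where $I_j\in\Z$ is the (finite) vertex-label of the $j$-th neighbour of the root in $\mathsf{Shape}\big(\Find_\infty\circ\OFind_\infty(\mathscr T^u_\infty)\big)$. But you then conclude that $\tfrac1n(\widetilde I^{(n)}_1,\dots,\widetilde I^{(n)}_{M_n})\to (I_1,\dots,I_M)$ almost surely. These two assertions are incompatible: if $\widetilde I^{(n)}_j$ is eventually equal to the fixed integer $I_j$, then $\tfrac1n\widetilde I^{(n)}_j\to 0$, not $I_j$. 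What your argument actually proves is that the \emph{unscaled} vector $(\widetilde I^{(n)}_1,\dots,\widetilde I^{(n)}_{M_n})$ converges almost surely (hence in law) to $(I_1,\dots,I_M)$; the $\tfrac1n$-scaled version converges, trivially, to the zero vector of random length $M$. The $\tfrac1n$ prefactor in the corollary is more naturally attached to the \emph{times} of the transpositions moving $1$, i.e.\ $\widetilde{\mathbb{T}}^{(n)}_1$, whose $\tfrac1n$-rescaling converges to the sorted edge-labels at the root of $\mathscr T^u_\infty$ (order statistics of $M$ i.i.d.\ uniforms on $[0,1]$). In any case, you should either drop the $\tfrac1n$ in your final display or replace the claimed limit by the appropriate one; as written, the last sentence contradicts the one before it.

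One minor wording point: the root of $\mathscr T^u_\infty$ is special, so its degree is $\overline\mu$-distributed with no adjustment; your phrase ``minus the possible spine adjustment'' is unnecessary and potentially confusing, although your conclusion (that $M$ is a.s.\ finite) is of course correct.
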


\subsection{Some explicit computations}
\label{sec:Consequences_ExplicitComputation}
In this Section, 
we compute explicitly some limiting distribution related to the above convergence results.
This is based on the explicit construction of the limiting tree in \cref{thm:localTrees}.
We start by proving that $\# \widetilde{\mathbb{T}}^{(n)}_{i}$ converges in distribution to a Poisson size-biased distribution (which is part of \cref{corol:main} (i)),
for which only  \cref{Prop:Local_Without_Vertex_Labels} is needed
(that is the convergence of trees without vertex labels).

\begin{corollary}
  \label{cor:marginals_T}
  Fix $i \in \Z$. Then, for every $j \geq 1$,
  $$\Pr{ \# \widetilde{\mathbb{T}}^{(n)}_{i}=j}  \quad \mathop{\longrightarrow}_{n \rightarrow \infty} \quad \frac{e^{-1}}{(j-1)!}.$$
\end{corollary}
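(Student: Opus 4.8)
The plan is to use \cref{Prop:Local_Without_Vertex_Labels}, which gives $\frac1n \cdot \EEE(\mathscr F_n) \to \mathsf{Shape}(\mathscr T^u_\infty)$ in the local topology on \E-labelled trees. Since the degree of the vertex labelled $i$ in $\Tree(\mathscr F_n)$ equals $\#\widetilde{\mathbb T}^{(n)}_i$ (as noted in \cref{sec:Consequences_GeneralStatements}), and since the pointed vertex of $\EEE(\mathscr F_n)$ is precisely the vertex labelled $1$, the case $i=1$ reduces to the statement that the root degree of $\mathsf{Shape}(\mathscr T^u_\infty)$ has the size-biased Poisson law $\overline{\mu}$, i.e. $\P(\mathrm{deg}_{\mathrm{root}}=j) = j\mu(j) = j \cdot e^{-1}/j! = e^{-1}/(j-1)!$. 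This is immediate from the definition of $\mathscr T_\infty$ recalled in the excerpt: the root is a special node, so its number of children follows $\overline\mu$, and since $\mathsf{Shape}$ and edge-labelling do not change the root degree, we are done for $i=1$.

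For general fixed $i \in \Z$ the vertex labelled $i$ is not the pointed vertex, so one more argument is needed. First I would reduce to $i \ge 1$, the case $i \le 0$ being symmetric by the same reasoning applied to $\OFind_\infty$ (or simply invoking \cref{thm:localTrees} and the symmetry of the construction). For $i \ge 1$, I would note that the degree of the vertex labelled $i$ is a local statistic of the \EV-labelled tree $\wTFFn$, hence by \cref{thm:localTrees} converges in distribution to the degree of the vertex labelled $i$ in $\mathsf{Shape}(\Find_\infty \circ \OFind_\infty(\mathscr T^u_\infty))$; but since $\Find_\infty$ and $\OFind_\infty$ only add vertex labels (not edges), this is just the degree of whichever vertex of $\mathscr T^u_\infty$ receives label $i$. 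So it remains to identify the law of that degree.

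The cleanest route is to appeal to \cref{thm:sym}: for every fixed $n$, $\#\widetilde{\mathbb T}^{(n)}_i$ and $\#\widetilde{\mathbb T}^{(n)}_1$ have the same distribution (this equidistribution of all the $\#\mathbb T^{(n)}_i$ is exactly what the displayed chain of distributional identities yields, after translating between $\mathbb T$ and $\widetilde{\mathbb T}$; the passage from $\mathscr F^{(n)}$ to $\widetilde{\mathscr F}^{(n)}$ only relabels which integer plays the role of $1$). Thus $\P(\#\widetilde{\mathbb T}^{(n)}_i = j) = \P(\#\widetilde{\mathbb T}^{(n)}_1 = j) \to e^{-1}/(j-1)!$ by the $i=1$ case, which finishes the proof. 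Alternatively, one avoids \cref{thm:sym} entirely: in Kesten's tree $\mathscr T_\infty$, the vertex labelled $i$ is a uniformly chosen vertex in a sense made precise by the relabelling algorithm, and a direct computation — conditioning on whether that vertex lies on the spine or is a normal vertex — shows its degree has law $\overline\mu$ in both cases (a spine vertex has $\overline\mu$-many children plus possibly a parent, a normal vertex has $\mu$-many children plus a parent, and the size-biasing in the exploration rebalances this to $\overline\mu$). I expect the main obstacle to be this last identification of the degree law at a non-root labelled vertex: one must be careful that the relabelling algorithm does not bias the choice of vertex in a way that alters the degree distribution, and the quickest rigorous shortcut is indeed to invoke the fixed-$n$ equidistribution \cref{thm:sym} rather than redo the computation on $\mathscr T_\infty$.
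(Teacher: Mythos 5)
Your proof is correct, and the $i=1$ step is identical to the paper's: read $\#\widetilde{\mathbb{T}}^{(n)}_{1}$ as the degree of the pointed vertex of $\EEE(\mathscr F_n)$, invoke \cref{Prop:Local_Without_Vertex_Labels}, and use the fact that the root of $\mathscr{T}^u_\infty$ is a special node with $\overline\mu$-many children, so that $\P(\deg_{\mathrm{root}}=j)=j\mu(j)=e^{-1}/(j-1)!$.

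The difference lies in the reduction to $i=1$. The paper does this in one line by conjugation: conjugating a factorization $(\tau_1,\dots,\tau_{n-1})$ of $(1,\dots,n)$ by the cycle $(1,\dots,n)$ itself gives another minimal factorization of the same cycle, and this bijection of $\mathfrak{M}_n$ sends $\#T^F_{i+1}$ to $\#T^{\mathcal B(F)}_i$; hence $\#\mathbb{T}^{(n)}_i$ is equidistributed for all $i$, and after the tilde-relabelling so is $\#\widetilde{\mathbb{T}}^{(n)}_i$. You instead invoke \cref{thm:sym}, which does imply the same equidistribution (projecting the first displayed chain onto alternating coordinates gives $\#\mathbb{T}^{(n)}_{i}\mathop{=}\limits^{(d)}\#\mathbb{M}^{(n)}_{i-1}\mathop{=}\limits^{(d)}\#\mathbb{T}^{(n)}_{i-1}$). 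This is logically sound and non-circular, since \cref{thm:sym} is proved independently via the Goulden--Yong bijection; but it is substantially heavier machinery than the conjugation symmetry, which in fact you nearly stated yourself when observing that ``the passage from $\mathscr F^{(n)}$ to $\widetilde{\mathscr F}^{(n)}$ only relabels which integer plays the role of $1$.'' Once the reduction to $i=1$ is done by symmetry, you do not actually need \cref{thm:localTrees} at all; the vertex-label-free \cref{Prop:Local_Without_Vertex_Labels} suffices, as in the paper. Your alternative sketch (direct computation of the degree law at the vertex receiving label $i$ in $\mathscr T^u_\infty$) is plausible but, as you note, the ``rebalancing'' claim would require a genuine argument; it is wise that you flag the symmetry route as the rigorous one.
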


\begin{proof}
 By using the action by conjugation of $(1,\dots,n)$ on minimal factorizations,
we see that the distribution of
the number of transpositions that act on $i$
is independent from $i$. It {is}  therefore enough to establish the result for $i=1$.

  By construction, $\# \widetilde{\mathbb{T}}^{(n)}_{1}$ is the degree of the pointed vertex in $\EEE(\mathscr F_n)$.
  Local convergence of unlabelled or $\E$-labelled tree implies the convergence of 
  the root degree,
  so that, from  \cref{Prop:Local_Without_Vertex_Labels},
  $\# \widetilde{\mathbb{T}}^{(n)}_{1}$ converges to the root degree in $\llbracket  \mathsf{Shape} (\mathscr{T}^{u}_{\infty}) \rrbracket_{1}$.
  By construction of $\mathscr{T}^{u}_{\infty}$, the degree of its root vertex
  is a $\mathsf{Poisson}(1)$ size-biased distribution, thus giving the desired result.
\end{proof}

The other parts of \cref{corol:main} need the full statement of \cref{thm:localTrees}
(that is with vertex labels).
We first establish \cref{corol:main} (ii).

\begin{proof}[Proof of \cref{corol:main} (ii)] 
  By \cref{thm:cvtraj}, as $n \rightarrow \infty$, we have
  \[ \P\big(X_{1}^{(n)}(k) \geq 1 \textrm{ for every } 0 \leq k \leq n\big)
  \longrightarrow \P\big(X_{1}(t) \ge 1 \textrm{ for every }t \in [0,1] \big).\]
  By definition of the limiting trajectories $X_i$ (\cref{eq:DefLimitingTrajectories}),
  $X_1$ takes only positive values if and only if there are only positive labels between the path
  between $1$ and $2$ in the limiting tree $\mathsf{Shape}\big(\Find_{\infty} \circ 
    \OFind_{\infty} (\mathscr{T}^{u}_{\infty}) \big) $.

 We will determine when this happens by distinguishing two cases:
 \begin{itemize}
   \item {\em Case 1: the edge $e$ with smallest label adjacent to the root does not belong to the spine}.
     We call $v$ its extremity which is not the root. 
     Then the algorithm $\Find_\infty$ enters first the fringe subtree rooted at $v$.
     The vertex getting label $2$ will therefore be in that subtree.
     Moreover, $\Find_\infty$ assigns a (positive) label to every vertex in that fringe subtree.
     We conclude that, in this case, the path between $1$ and $2$ indeed contains only positive labels.
   \item {\em Case 2: the edge $e$ with smallest label adjacent to the root belongs to the spine}.
     Then we claim that the path between $1$ and $2$ contains only positive labels
     if and only if the second edge-label $\ell_2$ of the spine is smaller than the first one (call it $\ell_1$).
     Indeed, if $\ell_2>\ell_1$, then the first nonroot vertex $v$ on the spine gets a negative label (see the proof
     of \cref{lem:UniqueLabel}) and lies on the path between $1$ and $2$.
     Conversely, if $\ell_2<\ell_1$, this vertex $v$ gets a positive label.
     Moreover, either this label is $2$, or $\Find_\infty$ enters a finite fringe subtree,
     and will assign only positive labels, including $2$, in this fringe subtree.
     In both cases, the path between $1$ and $2$ only contain positive labels.
 \end{itemize}
By conditioning on the number of children of the root,
we find that the probability of the first event is $\sum_{k=2}^{\infty} \frac{e^{-1}}{(k-1)!} \frac{k-1}{k} =1/e$.

Let us now compute the probability that we are in the second case {\em and} that $\ell_2<\ell_1$.
Let us work conditionally given the number $k$ of children of the root.
The conditional probability that the edge with smallest label adjacent to the root is that on the spine is $1/k$.
This smallest label has the distribution of the minimum of $k$ independent uniform random variable in $[0,1]$,
that is density $k (1-x)^{k-1}$.
Since $\ell_2$ is  uniform in $[0,1]$, independently of the number of children of the root and the labels of the corresponding edges,
conditionally on $\ell_1=x$, the probability that $\ell_2<\ell_1$ is simply $x$.
Summing up, the probability that we are in the second case {\em and} $\ell_2<\ell_1$ is
\[\sum_{k=1}^{\infty} \frac{e^{-1}}{(k-1)!} \frac{1}{k} \int_{0}^{1} x k (1-x)^{k-1} {\d}x= \int_{0}^{1} x e^{-x} {\d} x= 1-2/e,\]
where the first equality follows by exchanging sum and integral. 
The sum of the two probabilities is $1-1/e$, and this completes the proof.
\end{proof}

Finally, we establish \cref{corol:main} (iii) and (iv),
whose proofs are more involved.

\begin{proof}[Proof of \cref{corol:main} (iii) and (iv)]
 We start with considering the limiting tree $\mathsf{Shape}\big(\Find_{\infty} \circ 
  \OFind_{\infty} (\mathscr{T}^{u}_{\infty}) \big) $
  in \cref{thm:localTrees}.
  We denote $u_1^{\infty}$, $u_2^{\infty}$ and $d^{\infty}_{1,2}$
  the vertices with labels $1$ and $2$ in this tree, and their relative distance, respectively.

By \cref{thm:localTrees} and the  discussion in the beginning of 
\cref{sec:Consequences_GeneralStatements} concerning the relation 
between $ \# \widetilde{\mathbb{T}}^{(n)}_i$, $ \# \widetilde{\mathbb{M}}^{(n)}_i$ and $\wTFFn$,
we have, for every $i,j \geq 1$,
\[\Pr{ \#  \widetilde{\mathbb{T}}^{(n)}_{1}=i,  \# \widetilde{\mathbb{M}}_{1}^{(n)}=j}  \quad \mathop{\longrightarrow}_{n \rightarrow \infty} \quad  \Pr{\deg(u_{1}^{\infty})=i, d^{\infty}_{1,2}=j}.\]
and
\[ \Pr{  \#  \widetilde{\mathbb{T}}^{(n)}_{1}=i, \#  \widetilde{\mathbb{T}}^{(n)}_{2}=j}  \quad \mathop{\longrightarrow}_{n \rightarrow \infty} \quad \Pr{\deg(u_{1}^{\infty})=i, \deg(u_{2}^{\infty})=j}.\]

We also note that $u_1^{\infty}$, $u_2^{\infty}$ and $d^{\infty}_{1,2}$ 
can be equivalently read on $\Find_{2}(\mathscr{T}^{u}_{\infty})$
instead of $\mathsf{Shape}\big(\Find_{\infty} \circ  
  \OFind_{\infty} (\mathscr{T}^{u}_{\infty}) \big) $.
Therefore, in order to compute the limiting probabilities,
we only need to run $\Find_{2}$ on $\mathscr{T}^{u}_{\infty}$.
\medskip

For integers $h \ge1$, $0 \leq s \leq h$ {and $L_{h} \subset [0,1]$}, 
we introduce the probability $P_h^s(k_0,\dots,k_{h-1};k_{h}-1)$
(resp. $P_{\ge h}^s(k_0,\dots,k_{h-1};L_h)$)
of the following {conjunction}  of events, when running the algorithm $\Find_{2}$ on $\mathscr{T}^{u}_{\infty}$:
\begin{itemize}
  \item the algorithm  goes through exactly $h$ edges 
    (resp. at least $h$ edges),
    i.e. $u_2^{\infty}$ is at distance exactly $h$ (resp. at least $h$) from the root;
  \item If, as in the description of the algorithm, we call $j_1, j_2,\dots j_h$
    the vertices successively visited, then $j_i$ has degree $k_i$ for each $i < h$ (by convention, $j_0$ is the root of the tree);
  \item $j_0$, $j_1$, \dots, $j_s$ are special vertices, while $j_{s+1}$, \dots, $j_h$ are not;
  \item for $P_h^s(k_0,\dots,k_{h-1};k_h-1)$,
    we additionally require that $j_h$ has $k_h-1$ children
    (this shift makes formulas nicer).
  \item for $P_{\ge h}^s(k_0,\dots,k_{h-1};L_h)$,
    we also require that
    the label $\ell_h$ of the edge from between $j_{h-1}$ and $j_h$ lies in $L_h$.
\end{itemize}

Recall that the degree of the root of $\mathscr{T}^{u}_{\infty}$ 
follows a size-biased Poisson distribution and that edges adjacent to the root
are labeled by independent uniform variables in $[0,1]$.
If the root degree is $k_0$, the minimum among labels of edges adjacent to the root
has  density $k_0 (1-\ell_{1})^{k_0-1}$.
Moreover, $j_1$ is uniformly distributed among the children of the roots,
and so is the special vertex of height $1$, so $j_1$ has a probability $1/k_0$
to be a special vertex.
Therefore, the probability $P_{\ge 1}^0(k_0;L_1)$ and $P_{\ge 1}^1(k_0,L_1)$
are respectively given by
\begin{align*}
  P_{\ge 1}^0(k_0;L_1) &= \frac{e^{-1}}{(k_0-1)!} (k_0-1) \int_{\ell_1 \in L_1} (1-\ell_{1})^{k_0-1} {\d}\ell_1;\\
  P_{\ge 1}^1(k_0;L_1) &= \frac{e^{-1}}{(k_0-1)!} \int_{\ell_1 \in L_1} (1-\ell_{1})^{k_0-1} {\d}\ell_1.
\end{align*}
Let us focus, {\em e.g.}, on the case where $j_1$ is a special vertex.
Then its offspring distribution is again a size-biased Poisson distribution.
Conditionally on $\ell_1$ and on the fact that $j_1$ has $k_1$ children,
the label $\ell_2$ has density $\mathbbm{1}_{\ell_2 > \ell_1} k_1 (\ell_1+1-\ell_2)^{k_1-1}$.
Again, the probability that $j_2$ is special is $1/k_1$.
We therefore have
\begin{align*}   
  P_{\ge 2}^1(k_0,k_1;L_2) &=  \frac{e^{-1}}{(k_1-1)!} (k_1-1) 
  \int_{\ell_2 \in L_2} \int_{\ell_1<\ell_2} (\ell_1+1-\ell_2)^{k_1-1}\,
  P_{\ge 1}^1(k_0;{\d}\ell_1)\, {\d}\ell_2;\\
  P_{\ge 2}^2(k_0,k_1;L_2) &= \frac{e^{-1}}{(k_1-1)!}  
  \int_{\ell_2 \in L_2} \int_{\ell_1<\ell_2} (\ell_1+1-\ell_2)^{k_1-1}\,
  P_{\ge 1}^1(k_0;{\d}\ell_1)\, {\d}\ell_2.
\end{align*}
Similarly, if $j_1$ is not a special vertex, we have
\[ P_{\ge 2}^0(k_0,k_1;L_2) = \frac{e^{-1}}{k_1!} k_1
  \int_{\ell_{2} \in L_2} \int_{\ell_1<\ell_2} (\ell_1+1-\ell_2)^{k_1-1}\,
  P_{\ge 1}^0(k_0;{\d}\ell_1) \, {\d}\ell_2.\]
  Continuing the reasoning, an easy induction proves that we have
  \begin{multline*}
     P_{\ge h}^s(k_0,k_1,\dots,k_{h-1};L_h)
  = \left(  \prod_{i=0}^{h-1} \frac{e^{-1}}{(k_i-1)!}\right)\, (k_s-1)^{\ast} \\
 \cdot  \int_{\ell_h \in L_h} \int_{\ell_1<\dots<\ell_h} (1-\ell_{1})^{k_0-1}  (\ell_1+1-\ell_2)^{k_1-1}
  \dots (\ell_{h-1}+1-\ell_h)^{k_1-1} {\d}\ell_1 \cdots {\d}\ell_h,
\end{multline*}
where $(k_s-1)^{\ast}=1$ if $s=h$ and $(k_s-1)^{\ast}=k_{s}-1$ otherwise. 

Conditionally on the fact that the algorithm $\Find_2$ goes through at least $h$ edges,
and conditionally on the label $\ell_h$ of the last visited edge,
$\Find_2$ will stop at height $h$ if all labels of edges adjacent to $j_h$
are smaller than $\ell_h$, which happens with probability $\ell_h^{k_h-1}$
(where $k_h-1$ is the number of children of $j_h$).
Again, the distribution of $k_h$ depends on whether $j_h$ is special or not,
so that we should consider two cases separately:
for $s<h$
\begin{multline}
  \label{eq:Phs}
  P_h^s(k_0,\dots,k_{h-1};k_h-1) = 
\frac{e^{-1}}{(k_h-1)!}
\int_{[0,1]} \ell_h^{k_h-1} \, P_{\ge h}^s(k_0,k_1,\dots,k_{h-1};{\d}\ell_h) \\
=  \left(  \prod_{i=0}^{h} \frac{e^{-1}}{(k_i-1)!}\right)\, (k_s-1)\, 
\int_{\ell_1<\dots<\ell_h} \prod_{i=0}^{h} (\ell_{i} +1-\ell_{i+1})^{k_i-1}
{\d}\ell_1 \cdots {\d}\ell_h,
\end{multline}
with the convention $\ell_0=0$ and $\ell_{h+1}=1$.
Similarly, for $s=h$, we have
\begin{multline*}
  P_h^h(k_0,\dots,k_{h-1};k_h-1) = 
\frac{e^{-1}}{(k_h-2)!}
\int_{[0,1]} \ell_h^{k_h-1} \, P_{\ge h}^s(k_0,k_1,\dots,k_{h-1};{\d}\ell_h) \\
= \left(  \prod_{i=0}^{h-1} \frac{e^{-1}}{(k_i-1)!}\right) \, \frac{e^{-1}}{(k_h-2)!} \,
\int_{\ell_1<\dots<\ell_h} \prod_{i=0}^{h} (\ell_{i} +1-\ell_{i+1})^{k_i-1}
{\d}\ell_1 \cdots {\d}\ell_h.
\end{multline*} 
Note that this coincides with \eqref{eq:Phs} for $s=h$, so that
 \eqref{eq:Phs} is actually valid for every $s$ in $\{0,1,\dots,h\}$.

We now come back to the specific probabilities we want to evaluate.
For item (i), we fix $h=j$ and $k_0=i$ and sum over $s$ and over $k_1,\dots,k_h$:
\[\Pr{\deg(u_{1}^{\infty})=k_0, d^{\infty}_{1,2}=h}
= \sum_{s=0}^h \ \sum_{k_1,\dots,k_h \ge 1} P_h^s(k_0,k_1\dots,k_{h-1};k_h-1)
\]
For $s=0$, noting the sum over each $k_i$ ($i \ge1$) is the series expansion
of an exponential, we have
\begin{eqnarray*}
&&\sum_{k_1,\dots,k_h \ge 1} P_h^0(k_0,k_1\dots,k_{h-1};k_h-1)\\
&& \qquad\qquad\qquad\qquad \qquad = \frac{e^{-2} (k_0-1)}{(k_0-1)!} \int_{\ell_1<\dots<\ell_h} (1-\ell_1)^{k_0-1}
\, e^{\ell_1} \,  {\d}\ell_1 \cdots {\d}\ell_h.
\end{eqnarray*}
A similar computation for $s \ge 1$ gives
\begin{eqnarray*}
&&\sum_{k_1,\dots,k_h \ge 1} P_h^s(k_0,k_1\dots,k_{h-1};k_h-1) \\
&& \qquad  \qquad  \qquad   =  \frac{e^{-2}}{(k_0-1)!} \int_{\ell_1<\dots<\ell_h} (1-\ell_1)^{k_0-1}
\, e^{\ell_1} \, (\ell_s +1-\ell_{s+1})  \,  {\d}\ell_1 \cdots {\d}\ell_h.
\end{eqnarray*} 
Summing over $s$ in $\{0,1,\dots,h\}$, we find that
\begin{eqnarray*}
&& \Pr{\deg(u_{1}^{\infty})=k_0, d^{\infty}_{1,2}=h}\\
&& \qquad \qquad \qquad = \frac{e^{-2}}{(k_0-1)!} \int_{\ell_1<\dots<\ell_h} (1-\ell_1)^{k_0-1}
\, e^{\ell_1} \, (\ell_1+h+k_0-2)  \,  {\d}\ell_1 \cdots {\d}\ell_h.
\end{eqnarray*}
The integrand  only depends on $\ell_1$.
Besides, 
$\int_{\ell_1<\dots<\ell_h} {\d}\ell_2 \cdots {\d}\ell_h=\frac{(1-\ell_1)^{h-1}}{(h-1)!}$
for any fixed $\ell_1$.
Thus, we can rewrite the above integral as
\begin{align*}
  &\Pr{\deg(u_{1}^{\infty})=k_0, d^{\infty}_{1,2}=h} \\
  & \qquad \qquad =\frac{e^{-2}}{(k_0-1)!(h-1)!} 
\int_0^1 (1-\ell_1)^{k_0+h-2} \, e^{\ell_1} (\ell_1+h+k_0-2) \, {\d}\ell_1\\
& \qquad \qquad =\frac{e^{-2}}{(k_0-1)!(h-1)!},
\end{align*}
where the computation of the last integral is an easy calculus exercise.
This shows  \cref{corol:main} (iii).

To establish (iv), we fix $k_0=i$ and $k_h=j$
(for the non-root vertex $j_h=u_{2}^{\infty}$, having $k_h-1$ children
means having degree $k_h$) and sum over $h,s$ and $k_1,\dots,k_{h-1}$.
Namely, we have
\[\mathbb P \big(\deg(u_{1}^{\infty})=i,\deg(u_{2}^{\infty})=j\big)
= \sum_{h \ge 1} S_h,\]
where \[S_h= \sum_{s=0}^h \ \sum_{k_1,\dots,k_{h-1} \ge 1}
P_h^s(i,k_1\dots,k_{h-1};j-1). \]
The case $h=1$ is somewhat special since the last sum has only one summand corresponding
to the empty list. In this case, we may have $s=0$ and $s=1$ giving
\begin{eqnarray}
  S_1 & =&P_1^0(i;j-1) + P_1^1(i;j-1)\notag \\
  & =& \frac{e^{-2} (i+j-2)}{(i-1)! (j-1)!} 
\int_0^1 (1-\ell_1)^{i-1} \ell_1^{j-1} {\d}\ell_1= \frac{e^{-2} (i+j-2)}{(i+j-1)!}
\label{eq:S1}
\end{eqnarray}
Consider now the summands corresponding to $h \ge 2$.
A similar computation as above, starting from \eqref{eq:Phs}
and separating the cases $s=0$ and $s=h$, yields:
\[
  S_h =  \frac{e^{-2}}{(i-1)!(j-1)!} 
 \cdot \int_{\ell_1<\dots<\ell_h} (1-\ell_1)^{i-1} \, \ell_h^{j-1} \,
e^{\ell_1-\ell_h} \, (\ell_1-\ell_h +i+j+h-3)\, {\d}\ell_1 \cdots {\d}\ell_h.
\]
The integrand only involves $\ell_1$ and $\ell_h$.
Using the equality 
$\int_{\ell_1<\dots<\ell_h} {\d}\ell_2 \cdots {\d}\ell_{h-1} = \frac{(\ell_{h}-\ell_1)^{h-2}}{(h-2)!}$
and using variables $x \coloneqq \ell_1$ and $y \coloneqq \ell_h$,
we get
\[S_h= \frac{e^{-2}}{(i-1)!(j-1)!} \cdot 
\int_{x<y} \frac{(y-x)^{h-2}}{(h-2)!} (1-x)^{i-1} \, y^{j-1} \,
e^{x-y} \, (x-y +i+j+h-3) \, {\d}x {\d}y. \]
Summing this over $h \ge 2$ and exchanging sum and integral 
(the terms are nonnegative), we obtain
\begin{eqnarray*}
\sum_{h\ge 2} S_h &=& \frac{e^{-2}(i+j-1)}{(i-1)!(j-1)!} \cdot
\int_{x<y} (1-x)^{i-1}\, y^{j-1}  \, {\d}x{\d}y \\
&=&
\frac{e^{-2}(i+j-1)}{(i-1)!(j-1)!} 
  \left( \frac{1}{i\, j} - \frac{(i-1)!\, (j-1)!}{(i+j)!} \right).
  \end{eqnarray*}
Adding $S_1$, which was computed in \eqref{eq:S1}, we get
\begin{eqnarray*} \Pr{\deg(u_{1}^{\infty})=i,\deg(u_{2}^{\infty})=j}
&=& \sum_{h \ge 1} S_h \\
&=& e^{-2} \left( \frac{i+j-2}{(i+j-1)!} +\frac{i+j-1}{i!j!}
-\frac{i+j-1}{(i+j)!}\right).
\end{eqnarray*}
This completes the proof of  \cref{corol:main} (iv).
\end{proof}

\section{A  bijection and duality}
\label{sec:sym}
In this section, we construct a  bijection $\mathcal{B}$ 
for minimal factorizations with the following property:
if $F'= \mathcal{B}(F)$ with $F \in \mathfrak{M}_{n}$,
then, for every $1 \leq i \leq n$, the number of
 transpositions that affect the trajectory of $i$ in $F$ 
is equal to the number of transpositions in $F'$ containing $i$.
Combinatorial consequences of this bijection are then discussed. 
\bigskip

Our bijection is based on Goulden--Yong's duality bijection \cite{GY02}, which we now present (see \cref{fig:dual} for an example; note that here we multiply from left to right, while in \cite{GY02} the multiplication is done from right to left). See \cite{Apo18a,Apo18b} for extensions in a more general context. Let $F \in \mathfrak{M}_{n}$ be a minimal factorization. Recall from \cref{ssec:ELTree} its associated pointed non-plane \EV-labelled tree $ \mathcal{E}(F)$. 

First draw $ \mathcal{E}(F)$ inside the complex unit disk $\overline{\mathbb{D}}$ by identifying vertex $j$ (for $1 \leq j \leq n$) with the complex number $e^{- \frac{2 i \pi (j-1) }{n}}$.
A \emph{face} is a connected component of $\overline{\mathbb{D}} \backslash \mathcal{E}(F)$.
Then, by \cite{GY02}, edges do not cross, and every face contains exactly one arc of $\mathbb{S}$ of the form $\wideparen{j,j+1}$
for a certain $1 \leq j \leq n$ (with the convention $n+1=n$ and by identifying $e^{- \frac{2 i \pi (j-1) }{n}}$ with $j$, see \cref{fig:dual}).
Conversely, every arc $\wideparen{j,j+1}$ with $1 \leq j \leq n$ is contained in a face.

Some combinatorial information is easily read on $ \mathcal{E}(F)$.
Indeed, the number $\#T^F_i$ of transpositions in $F$ containing $i$
is simply the degree of the vertex labelled $i$ in $\mathcal{E}(F)$.
Similarly, the number $\#M^F_j$ of transpositions that affect the trajectory of $j$ in $F$
is the number of edges lying around the face of $\mathcal{E}(F)$
containing the arc $\wideparen{j,j+1}$.
In particular, by applying a horizontal symmetry to $\mathcal{E}(F)$
we obtain the following identity on generating functions:
\begin{equation}
    \label{eq:Horizontal_Symmetry}
    \sum_{F \in \mathfrak M_n} x^{\#T^F_1} y^{\#M^F_j}
       = \sum_{F \in \mathfrak M_n} x^{\#T^F_1} y^{\#M^F_{n+1-j}}.
\end{equation}

Then, still following \cite{GY02}, define the ``dual'' \EV-labelled tree $ \mathcal{E}^{\dagger}(F)$ as follows:
the vertices are $e^{- \frac{2 i \pi (j-1) }{n}- \frac{i \pi }{2n}}$ (which is given label $j$) for $1 \leq j \leq n$,
and two vertices $e^{- \frac{2 i \pi (j-1) }{n}- \frac{i \pi }{2n}}$ and $e^{- \frac{2 i \pi (k-1) }{n}- \frac{i \pi }{2n}}$ are connected 
if the two faces containing  $\wideparen{j,j+1}$ and $\wideparen{k,k+1}$ are adjacent in $\mathcal{E}(F)$.
Moreover, the corresponding edge gets the label of the edge of $ \mathcal{E}(F)$ separating these two faces.

Finally, define $\overline{\mathcal{E}}^{\dagger}(F)$ by changing the edge labels of  $ \mathcal{E}^{\dagger}(F)$ by ``symmetrization'': exchange labels $i$ and $n-i$ for every $1 \leq i \leq n/2$. It turns out that  $\overline{\mathcal{E}}^{\dagger}(F)$  codes a minimal factorization, which allows to define $ \mathcal{B}(F)$:

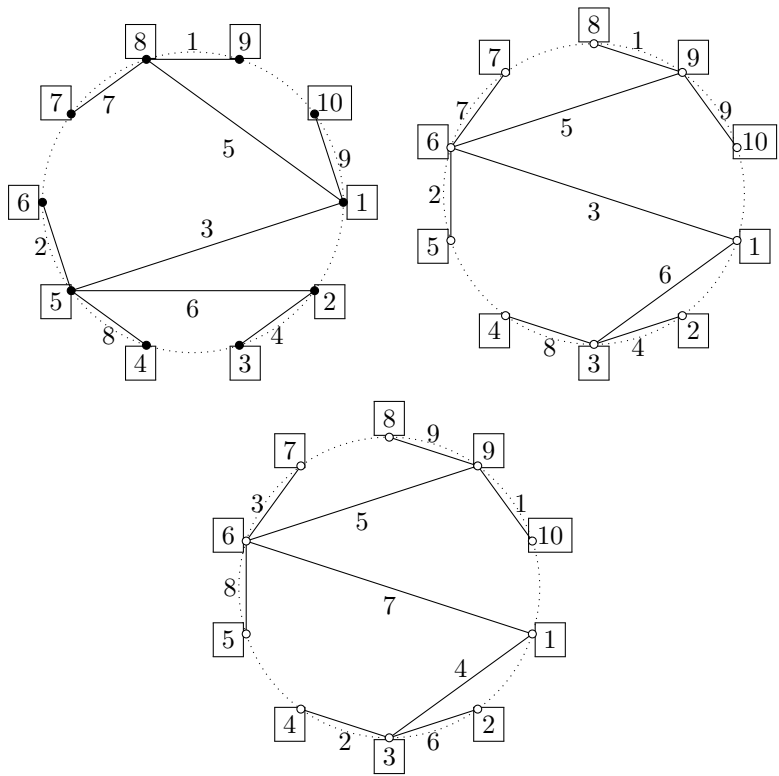
\begin{figure}
 \begin{tikzpicture}[scale=1]
\draw[dotted]	(0,0) circle (2);
\foreach \x in { 1, 2, ..., 10}
	\coordinate (\x) at (-\x*360/10+36 : 2);

\foreach \x in { 1, 2, ..., 10}
	\draw[fill=black]	(\x) circle (1.5pt);

\foreach \x in { 1, 2, ..., 10}
	\draw (-\x*360/10+36 : 2*1.125) node {\framebox{\x}} ;

\draw
	(8) -- (9) node [midway,  above] {{1}}
	(5) -- (6) node [midway, left] {{2}}
	(1) -- (5) node [midway, above] {{3}}
	(2) -- (3) node [midway, below] {{4}}
	(1) -- (8) node [midway, below left] {{5}}
	(2) -- (5) node [midway, below] {{6}}
	(7) -- (8) node [midway, below] {{7}}
	(4) -- (5) node [midway, below] {{8}}
	(1) -- (10) node [midway, right ] {{9}}
;
\end{tikzpicture} \, 
 \begin{tikzpicture}[scale=1]
\draw[dotted]	(0,0) circle (2);
%
\foreach \x in { 1,2, ..., 10}
	\coordinate (d\x) at (-\x*360/10+18 : 2);
	%

\foreach \x in { 1, 2, ..., 10}
	\draw (-\x*360/10+18 : 2*1.125) node {\framebox{\x}} ;


\draw
	(d9) -- (d8) node [midway, above] {1}
	(d9) -- (d10) node [midway, right ] {9}
	(d9) -- (d6) node [midway, below] {5}
	(d1) -- (d6) node [midway, below] {3}
	(d1) -- (d3) node [midway, above] {6}
	(d2) -- (d3) node [midway, below] {4}
	(d3) -- (d4) node [midway, below] {8}
	(d5) -- (d6) node [midway, left] {2}
	(d6) -- (d7) node [midway, left] {7}
;

\foreach \x in { 1, 2, ..., 10}
	\draw[fill=white]	(d\x) circle (1.5pt);
\end{tikzpicture}
\, 
 \begin{tikzpicture}[scale=1]
\draw[dotted]	(0,0) circle (2);
%
\foreach \x in { 1,2, ..., 10}
	\coordinate (d\x) at (-\x*360/10+18 : 2);
	%

\foreach \x in { 1, 2, ..., 10}
	\draw (-\x*360/10+18 : 2*1.125) node {\framebox{\x}} ;


\draw
	(d9) -- (d8) node [midway,  above] {9}
	(d9) -- (d10) node [midway, right ] {1}
	(d9) -- (d6) node [midway, below] {5}
	(d1) -- (d6) node [midway, below] {7}
	(d1) -- (d3) node [midway, above] {4}
	(d2) -- (d3) node [midway, below] {6}
	(d3) -- (d4) node [midway, below] {2}
	(d5) -- (d6) node [midway, left] {8}
	(d6) -- (d7) node [midway, left] {3}
;

\foreach \x in { 1, 2, ..., 10}
	\draw[fill=white]	(d\x) circle (1.5pt);
\end{tikzpicture}
\caption{\label{fig:dual}
Several objects associated with the same minimal factorisation $F=((8,9) \, (5,6)\, (1,5)\, (2,3)\, (1,8)\, (2,5)\, (7,8)\, (4,5)\, (1,10))$ of $(1,2, \dots,10)$.
From left to right: the \EV-labelled tree $ \mathcal{E}(F)$, its dual tree  $ \mathcal{E}^{\dagger}(F)$ and its symmetrized version $\overline{\mathcal{E}}^{\dagger}(F)$ which codes a minimal factorization $ \mathcal{B}(F)$.  
}
\end{figure}

\begin{lemma}
\label{lemma:minfact}
For every minimal factorization $F$, there exists a unique minimal factorisation $ \mathcal{B}(F)$ such that $\overline{\mathcal{E}}^{\dagger}(F)= \mathcal{E}( \mathcal{B}(F))$. 
\end{lemma}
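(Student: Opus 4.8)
The plan is to derive the statement from \cref{prop:Mos}. Recall that $\mathcal{E}$ is a \emph{bijection} from $\mathfrak{M}_{n}$ onto the set of pointed \E-labelled non-plane trees with $n$ vertices whose edges are labelled by $\{1,\dots,n-1\}$. Consequently, the existence of a unique minimal factorization $\mathcal{B}(F)$ with $\mathcal{E}(\mathcal{B}(F))=\overline{\mathcal{E}}^{\dagger}(F)$ is \emph{equivalent} to the assertion that $\overline{\mathcal{E}}^{\dagger}(F)$, viewed as a pointed \E-labelled non-plane tree --- pointed at the vertex labelled $1$ (the one carrying the arc $\wideparen{1,2}$) and with the other vertex labels forgotten --- is such a tree; in that case one simply sets $\mathcal{B}(F)=\mathcal{E}^{-1}(\overline{\mathcal{E}}^{\dagger}(F))$. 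So it suffices to check three things about $\overline{\mathcal{E}}^{\dagger}(F)$: that it has $n$ vertices, that it is a tree, and that its edge label set is $\{1,\dots,n-1\}$.

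First I would record that, by the properties of Goulden--Yong's construction recalled above (\citet{GY02}), the faces of $\mathcal{E}(F)$ inside $\overline{\mathbb{D}}$ are in bijection with the $n$ arcs $\wideparen{j,j+1}$ ($1\le j\le n$), so $\mathcal{E}^{\dagger}(F)$, and hence $\overline{\mathcal{E}}^{\dagger}(F)$, has exactly $n$ vertices, receiving the labels $1,\dots,n$. Next, $\mathcal{E}^{\dagger}(F)$ is connected: given two faces, a path in $\overline{\mathbb{D}}$ joining interior points of the two and transverse to $\mathcal{E}(F)$ crosses a finite sequence of edges, and the corresponding sequence of faces is a path in $\mathcal{E}^{\dagger}(F)$. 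Then I would count edges. Each edge $e$ of $\mathcal{E}(F)$ borders two \emph{distinct} faces of $\overline{\mathbb{D}}\setminus\mathcal{E}(F)$: after adding the boundary circle of $\overline{\mathbb{D}}$ (subdivided at the $n$ vertices) to form a planar map, the two endpoints of $e$ are also joined by an arc of that circle, so $e$ lies on a cycle of this map and hence is not a bridge, i.e. its two sides are distinct faces (and both lie inside $\overline{\mathbb{D}}$ since $e$ does). Thus each $e$ yields an edge $e^{\dagger}$ of $\mathcal{E}^{\dagger}(F)$, and conversely every edge of $\mathcal{E}^{\dagger}(F)$ comes from some edge of $\mathcal{E}(F)$; hence $\mathcal{E}^{\dagger}(F)$ has at most $n-1$ edges. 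Being connected on $n$ vertices it has at least $n-1$ edges, so it has exactly $n-1$, the map $e\mapsto e^{\dagger}$ is a bijection between the edge sets of $\mathcal{E}(F)$ and $\mathcal{E}^{\dagger}(F)$ (in particular the edge labelling of $\mathcal{E}^{\dagger}(F)$ is well defined), and $\mathcal{E}^{\dagger}(F)$ is a tree. Through this bijection $\mathcal{E}^{\dagger}(F)$ carries the edge labels $\{1,\dots,n-1\}$; passing to $\overline{\mathcal{E}}^{\dagger}(F)$ applies the involution $i\mapsto n-i$ of $\{1,\dots,n-1\}$, which is again a labelling by $\{1,\dots,n-1\}$. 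This establishes the three points, and the lemma follows from \cref{prop:Mos}.

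The hard part is the edge count, namely showing that $\overline{\mathcal{E}}^{\dagger}(F)$ really is a tree: one must rule out both that an edge of $\mathcal{E}(F)$ fails to separate two faces and that two distinct edges get identified. The ``add the boundary circle and use that the edge lies on a cycle'' argument handles the first, while combining connectedness with the cheap bound ``at most $n-1$ edges'' handles the second without a separate combinatorial argument. (It is also natural, though not needed for the statement itself, to observe via \citet{GY02} that the labelling $\wideparen{j,j+1}\mapsto j$ placed on $\overline{\mathcal{E}}^{\dagger}(F)$ coincides with the canonical vertex labelling of the tree $\Tree(\mathcal{B}(F))$; this is what makes $\mathcal{B}$ useful for \cref{thm:sym}.)
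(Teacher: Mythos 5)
Your proof takes a genuinely different route from the paper's, but it proves a strictly weaker statement, and the missing part is precisely what the lemma is being used for.

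The paper's proof is short: it quotes Goulden--Yong's Theorem~2.2 (in $\mathcal{E}(F)$, edge labels decrease clockwise around each face starting from its boundary arc), deduces that the symmetrization $i\mapsto n-i$ makes the same label-decreasing property hold for the faces of $\overline{\mathcal{E}}^{\dagger}(F)$, and then cites Goulden--Yong's Lemma~2.5, which says that a non-crossing tree drawing in the disk with this label-decreasing property is $\mathcal{E}(F')$ for a unique minimal factorization $F'$ of $(1,\dots,n)$, \emph{with vertex $j$ of $\Tree(F')$ sitting exactly at the $j$-th position}.

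The crux is how to read the equality $\overline{\mathcal{E}}^{\dagger}(F)=\mathcal{E}(\mathcal{B}(F))$. You read it after projecting $\overline{\mathcal{E}}^{\dagger}(F)$ to a pointed \E-labelled tree (forgetting vertex labels except the pointed one), but the paper means an equality of \EV-labelled trees, i.e.\ $\overline{\mathcal{E}}^{\dagger}(F)=\Tree(\mathcal{B}(F))$. This stronger reading is what the proof of \cref{thm:NewBijection} actually uses: the phrase ``the degree of $i$ in $\overline{\mathcal{E}}^{\dagger}(F)=\mathcal{E}(\mathcal{B}(F))$'' only makes sense if both sides carry vertex labels and those labellings agree. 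It is also what motivates the symmetrization step in the first place: your argument would apply verbatim to $\mathcal{E}^{\dagger}(F)$ without any symmetrization (it too is a connected $n$-vertex graph whose edges inherit the label set $\{1,\dots,n-1\}$), producing a unique $F'$ with $\mathcal{E}(F')=\mathcal{E}^{\dagger}(F)$ as pointed \E-labelled trees --- but that $F'$ would not satisfy $\Tree(F')=\mathcal{E}^{\dagger}(F)$ as \EV-labelled trees, and the downstream arguments would collapse.

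So there is a genuine gap: you establish that the forgetful projection of $\overline{\mathcal{E}}^{\dagger}(F)$ lies in the image of $\mathcal{E}$, but not that the face-indexed vertex labelling $\wideparen{j,j+1}\mapsto j$ coincides with the vertex labelling of $\Tree(\mathcal{B}(F))$. You mention this compatibility in your last paragraph, attribute it to \citet{GY02}, and declare it ``not needed for the statement itself''; under the intended reading it \emph{is} the statement, and proving it is exactly where the label-decreasing characterization (GY02 Theorem~2.2 and Lemma~2.5) is needed. Your bridge-plus-Euler argument that $\mathcal{E}^{\dagger}(F)$ is a tree is correct and nicely self-contained, but it is subsumed once one invokes that characterization.
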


\begin{proof}
We recall  \cite[Theorem 2.2]{GY02} 
(adapted to the fact that here we multiply from left to right,
while in \cite{GY02} the multiplication is done from right to left):
in $\mathcal E(F)$,
when turning along a face in clockwise order starting from the arc of $\mathbb S$
in its boundary, the edge labels are decreasing.
Therefore, in $\overline{\mathcal{E}}^{\dagger}(F)$, 
the edge labels are also decreasing in clockwise order in every face
(starting each time from the circle arc contained in the face boundary).
{From \cite[Lemma 2.5]{GY02}, this condition implies 
the existence of a unique minimal factorisation whose associated drawing is $\overline{\mathcal{E}}^{\dagger}(F)$. This completes the proof.}
\end{proof}

The fact that $ \mathcal{B}$ is a bijection follows by definition 
of $ \mathcal{B}$, since $ \mathcal{E}$ is a bijection. 
   
\begin{theorem}
  \label{thm:NewBijection}
Let $F$ be a minimal factorization. For every $1 \leq i \leq n$:
\begin{enumerate}
  \item[(i)]  the number of transpositions in $F$ that affect the trajectory of $i$ 
    is equal to the number of transpositions in $\mathcal{B}(F)$ containing $i$, 
    {\em i.e.} $\#M_i^F=\#T_i^{\mathcal B(F)}$;
  \item[(ii)] the number of transpositions in $F$ containing $i$ is equal to 
  the number of transpositions in $\mathcal{B}(F)$ that affect the trajectory of $i-1$,
  {\em i.e.} $\#T_i^F=\#M_{i-1}^{\mathcal B(F)}$. 
  (We use the convention $i-1=n$ for $i=1$.)
\end{enumerate}
\end{theorem}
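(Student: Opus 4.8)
The plan is to read off both assertions directly from the construction of $\mathcal{B}$ and the dictionary between the objects $\#T^F_i$, $\#M^F_i$ and the combinatorial drawing $\mathcal{E}(F)$ established just before the statement. Recall that $\#T^F_i$ is the degree of the vertex labelled $i$ in $\mathcal{E}(F)$, while $\#M^F_j$ is the number of edges around the face of $\mathcal{E}(F)$ containing the arc $\wideparen{j,j+1}$. The bijection $\mathcal{B}$ is defined by $\mathcal{E}(\mathcal{B}(F))=\overline{\mathcal{E}}^{\dagger}(F)$, where $\mathcal{E}^{\dagger}(F)$ is the planar dual of $\mathcal{E}(F)$ (vertex of $\mathcal{E}^{\dagger}(F)$ labelled $j$ sits inside the face of $\mathcal{E}(F)$ carrying the arc $\wideparen{j,j+1}$, edges of $\mathcal{E}^{\dagger}(F)$ correspond to edges of $\mathcal{E}(F)$ via face-adjacency, with the same label), and $\overline{\cdot}^{\dagger}$ additionally swaps edge labels $i \leftrightarrow n-i$. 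The key point is that the edge-label symmetrization only affects edge labels, not the graph structure or the vertex labels, so for the purpose of counting degrees and face sizes we may work with $\mathcal{E}^{\dagger}(F)$ itself.

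First I would prove (i). By planar duality, the degree of the vertex labelled $j$ in $\mathcal{E}^{\dagger}(F)$ equals the number of edges of $\mathcal{E}(F)$ bordering the face that contains the arc $\wideparen{j,j+1}$, which is exactly $\#M^F_j$. Since $\mathcal{B}(F)$ has drawing $\overline{\mathcal{E}}^{\dagger}(F)$, which has the same underlying vertex-labelled graph as $\mathcal{E}^{\dagger}(F)$, the degree of the vertex labelled $j$ in $\mathcal{E}(\mathcal{B}(F))$ is $\#M^F_j$; but that degree is by definition $\#T^{\mathcal{B}(F)}_j$. Hence $\#M^F_i=\#T^{\mathcal{B}(F)}_i$ for every $i$, which is (i).

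Next I would prove (ii). This is the dual statement: the faces of $\mathcal{E}^{\dagger}(F)$ correspond to the vertices of $\mathcal{E}(F)$, and the number of edges around the face of $\mathcal{E}^{\dagger}(F)$ associated with the vertex labelled $i$ of $\mathcal{E}(F)$ equals $\deg_i \mathcal{E}(F)=\#T^F_i$. The only subtlety — and the step I expect to be the real obstacle — is bookkeeping the labels of the arcs: one must check that the face of $\mathcal{E}^{\dagger}(F)$ surrounding the vertex labelled $i$ of $\mathcal{E}(F)$ is precisely the one carrying the arc $\wideparen{i-1,i}$ (with the cyclic convention $i-1=n$ when $i=1$), which is forced by the chosen angular offset $e^{-2\mathrm{i}\pi(j-1)/n-\mathrm{i}\pi/(2n)}$ placing the dual vertex labelled $j$ in the sector between original vertices $j$ and $j+1$. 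Granting this, the face around vertex $i$ of $\mathcal{E}(F)$ carries arc $\wideparen{i-1,i}$ in $\mathcal{E}^{\dagger}(F)$, so its edge-count is $\#M^{\mathcal{B}(F)}_{i-1}$ once we pass to $\mathcal{B}(F)$ (again using that symmetrization does not change face sizes). This gives $\#T^F_i=\#M^{\mathcal{B}(F)}_{i-1}$, which is (ii). Finally, note that the symmetrization step, although irrelevant for the counting statements above, is what guarantees via Lemma~\ref{lemma:minfact} that $\overline{\mathcal{E}}^{\dagger}(F)$ actually codes a minimal factorization of $(1,2,\dots,n)$ and not merely of some cyclic permutation, so that $\mathcal{B}(F)$ is well-defined in $\mathfrak{M}_n$; this has already been established and may be invoked directly.
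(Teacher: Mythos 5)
Your proof is correct and follows essentially the same approach as the paper: read both statistics off the planar drawing $\mathcal{E}(F)$ (degree of a vertex versus number of edges around a face) and observe that duality exchanges them, with the edge-label symmetrization irrelevant for counting. The paper presents (i) as a terse chain of four equal quantities and leaves (ii) to the reader; you have merely spelled out the duality and the arc bookkeeping more explicitly.
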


When $\mathscr F_n$  denotes a  minimal factorizations of size $n$ chosen uniformly at random,
as in the Introduction we use the following notation for $1 \leq i \leq n$ : ${\mathbb{T}}^{(n)}_{i} \coloneqq {T}^{\mathscr F_n}_i$ and  
${\mathbb{M}}^{(n)}_{i} \coloneqq {M}^{\mathscr F_n}_i$. 
\cref{thm:NewBijection} implies that $\#\mathbb{T}^{(n)}_{i}$ and $\#\mathbb{M}^{(n)}_{i}$ have the same law.
Combining with \cref{cor:marginals_T}, we conclude that the limiting distribution of $\#\mathbb{M}^{(n)}_{i}$
is a size biased Poisson law, as claimed in \cref{corol:main}.
\begin{proof}
Fix $1 \leq i \leq n$. For (i), observe that, by construction, the following numbers are all equal:
\begin{itemize}
  \item the number of transpositions that affects the trajectory of $i$ in $F$;
  \item the number of edges adjacent to the face of $ \mathcal{E}(F)$ containing the arc $\wideparen{i,i+1}$;
  \item the degree of $i$ in $\overline{\mathcal{E}}^{\dagger}(F)=\mathcal{E}(\mathcal{B}(F))$;
  \item the number of transpositions in $ \mathcal{B}(F)$ containing $i$.
\end{itemize}
The proof of the second assertion is similar and is left to the reader.
\end{proof}

We can now prove the distributional identity stated in \cref{thm:sym}.
\begin{proof}[Proof of \cref{thm:sym}]
  The first equality in distribution is a probabilistic translation
  of the properties of the bijection $\mathcal B$ (\cref{thm:NewBijection}).
  The second is the result of applying to $\mathcal E(F)$ 
  an axial symmetry around the diameter containing $e^{\frac{-i \pi (k-1)}{n}}$.
\end{proof}

\bibliographystyle{alea3}

\end{document}